\renewcommand{\subjclassname}{%
  \textup{1991} Mathematics Subject Classification}
\let\csname subjclassname@1991\endcsname \subjclassname
\definecolor{gr}{rgb}   {0.,   0.69,   0.23 }
\definecolor{bl}{rgb}   {0.,   0.5,   1. }
\definecolor{mg}{rgb}   {0.85,  0.,    0.85}
\definecolor{yl}{rgb}   {0.8,  0.7,   0.}
\definecolor{or}{rgb}  {0.7,0.2,0.2}
\newcommand{\Cc}{\mathcal{C}}
\newcommand{\Bb}{\mathcal{B}}
\newtheorem{theorem}{Theorem} [section]
\newtheorem{lemma}[theorem]{Lemma}
\newtheorem{proposition}[theorem]{Proposition}
\newtheorem{remark}[theorem]{Remark}
\newtheorem{definition}[theorem]{Definition}
\newtheorem{corollary}[theorem]{Corollary}
\DeclareMathOperator*{\supp}{supp}
\newcommand{\1}{\hspace{0.5mm}\text{I}\hspace{0.5mm}}
\newcommand{\II}{\text{I \hspace{-2.8mm} I} }
\newcommand{\III}{\text{I \hspace{-2.9mm} I \hspace{-2.9mm} I}}
\newcommand{\IV}{\text{I \hspace{-2.9mm} V}}
\newcommand{\noi}{\noindent}
\newcommand{\Z}{\mathbb{Z}}
\newcommand{\R}{\mathbb{R}}
\newcommand{\C}{\mathbb{C}}
\newcommand{\T}{\mathbb{T}}
\newcommand{\bul}{\bullet}
\newcommand{\deff}{\stackrel{\textup{def}}{=}}
\renewcommand{\T}{T^{\star}}
\let\Re=\undefined\DeclareMathOperator*{\Re}{Re}
\let\P= \undefined
\newcommand{\P}{\mathbf{P}}
\newcommand{\Q}{\mathbf{Q}}
\newcommand{\E}{\mathbb{E}}
\newcommand{\K}{\mathcal{K}}
\newcommand{\al}{\alpha}
\newcommand{\be}{\beta}
\newcommand{\dl}{\delta}
\newcommand{\vb}{\overline{v}}
\newcommand{\nb}{\nabla}
\newcommand{\too}{\longrightarrow}
\newcommand{\Dl}{\Delta_A}
\newcommand{\del}{\delta}
\newcommand{\eps}{\varepsilon}
\newcommand{\kk}{\kappa}
\newcommand{\g}{\gamma}
\newcommand{\G}{\Gamma}
\newcommand{\ld}{\lambda}
\newcommand{\Ld}{\Lambda}
\newcommand{\s}{\sigma}
\newcommand{\Si}{\Sigma}
\newcommand{\ft}{\widehat}
\newcommand{\cj}{\overline}
\newcommand{\dt}{\partial_t}
\newcommand{\dd}{\partial}
\newcommand{\Dlg}{\Delta_\gm}
\newcommand{\Dla}{\Delta_A}
\newcommand{\ta}{\theta}
\newcommand{\diag}{\bigtriangleup}
\newcommand{\ph}{\varphi}
\renewcommand{\l}{\ell}
\newcommand{\om}{\omega}
\renewcommand{\O}{\Omega}
\newcommand{\les}{\lesssim}
\newcommand{\jb}[1]
{\langle #1 \rangle}
\renewcommand{\S}{\mathcal{S}}
\newcommand{\gm}{\mathbf{\mathrm{g}}}
\newcommand{\Prob}{\mathbb{P}}
\newcommand{\M}{\mathcal{M}}
\def\e{\eps}
\newcommand{\N}{\mathbb{N}}
\renewcommand{\H}{\mathcal{H}}
\newcommand{\D}{\mathcal{D}}
\newcommand{\Id}{\textup{Id}}
\def\DeclareSymbol#1#2#3{\expandafter\gdef\csname MH@symb@#1\endcsname{\tikz[baseline=#2, scale=.18]{#3}}}
\def\<#1>{\ensuremath{\mathchoice{\tikzsetnextfilename{macros#1}{\color{black}\csname MH@symb@#1\endcsname}}{\tikzsetnextfilename{macros#1}{\color{black}\csname MH@symb@#1\endcsname}}{\tikzsetnextfilename{macros#1}\scalebox{.7}{\color{black}\csname MH@symb@#1\endcsname}}
{\tikzsetnextfilename{macros#1}\scalebox{.5}{\color{black}\csname MH@symb@#1\endcsname}}}} 
\newcommand{\PN}{\Psi_N}
\newcommand{\dg}{\mathbf{d}}
\newcommand{\VV}{\mathcal{V}}
\newtheorem*{ackno}{Acknowledgements}
\numberwithin{equation}{section}
\numberwithin{theorem}{section}
\begin{document}
\baselineskip = 15pt

\title[Stochastic SCGL on surfaces]
{Stochastic complex Ginzburg-Landau equation on compact surfaces}

\author[T.~Robert and Y.~Zine]
{Tristan Robert\orcidlink{0000-0003-1987-4536} and Younes Zine\orcidlink{0009-0001-7752-1205}}

\address{\small{Institut  \'Elie Cartan, Universit\'e de Lorraine, B.P. 70239, F-54506 Vand\oe uvre-l\`es-Nancy Cedex, France
}}

\email{tristan.robert@univ-lorraine.fr}

\address{
Younes Zine\\
 \'Ecole Polytechnique F\'ed\'erale de Lausanne\\
1015 Lausanne\\ Switzerland}

\email{younes.zine@epfl.ch}

\subjclass[2020]{35K15,60H15,58J35}

\keywords{stochastic equation on manifolds; 
polynomial nonlinearity;
Ginzburg-Landau equation;
Gibbs measure}

\begin{abstract} We study a stochastic complex Ginzburg-Landau equation (SCGL) on compact surfaces with magnetic Laplacian and polynomial nonlinearity, forced by a space-time white noise. After renormalizing the equation in a suitable manner, we show that the dynamics is locally well-posed. Moreover, we prove deterministic global well-posedness for the defocusing SCGL in the weakly dispersive regime. 
\end{abstract}



\maketitle

%


\tableofcontents

\baselineskip = 14pt

\section{Introduction}

\subsection{The stochastic complex Ginzburg-Landau equation}\label{SUBS:intro}
In this work, we discuss the well-posedness issue for the following stochastic complex Ginzburg-Landau (SCGL) equation with an additive space-time white noise forcing on a general closed (compact, connected, boundaryless) orientable surface: 
\begin{equation}
\begin{cases}
\dt u = (\alpha_1 + i\alpha_2) \Dla   u - (\beta_1 + i \beta_2) \VV'(|u|^2)u + \sqrt{2 \g} \xi, \quad (t,x) \in \R_+ \times \M  \\
u_{|t=0}=u_0,
\end{cases}
\label{SCGL 1}
\end{equation}
where $ u = u(t,x) $ is a complex-valued function, $\alpha_1>0$, $\g > 0$, $\alpha_2, \beta_1, \beta_2 \in \R$, $(\M,\gm)$ is a closed orientable Riemannian surface, $\VV$ is a polynomial, and $\Dla$ is the magnetic Laplacian with magnetic potential given by a real-valued smooth 1-form $A\in C^\infty(\M;T^\star\M)$; see Subsection~\ref{SUBSEC:geom} below for proper definitions, and with stochastic forcing given by the complex-valued space-time white noise $\xi$.

On the one hand, the deterministic CGL equation (namely, \eqref{SCGL 1} with $\xi\equiv0$) arises in many physical situations, in particular superconductivity and Bose-Einstein condensates; see \cite{AK} for a review. On the other hand, the stochastic SCGL equation \eqref{SCGL 1} is also a test-bed model to incorporate random effects, and as such has already been investigated in \cite{B-S,CCL,Kuksin,KS,HairerICMP,Hoshino,HIN,Matsuda,Shirikyan,Trenberth,Younes} on different bounded spatial domains and with various stochastic source terms $\xi$; see also \cite{DDFa,DDFb,DDFc} for works on $\R^d$ with a confining potential.

While the aforementioned results eventually incorporate electric potentials, magnetic effects are not taken into account. Letting then $E$ be a complex line bundle over $(\M,\gm)$ with associated unitary connection $D_A$, the dynamics \eqref{SCGL 1} corresponds to the Hamiltonian/stochastic quantization equation associated with the Ginzburg-Landau energy
\begin{align}\label{YMHaction}
\H(u,A)=\int_\M \Big(\frac12|F_A|^2+\frac{\alpha}2|D_Au|^2+ \frac{\beta}2\VV(|u|^{2})\Big)dV,
\end{align}
defined for $u\in C^\infty(\M;E)$, a smooth section (identified with a function $u:\M\to\C$), where $F_A$ is the curvature of the unitary connection $D_A = d-iA$ (locally), $d$ is the exterior derivative, and $dV$ is the volume form on $(\M,\gm)$.

In this work, we are thus interested in giving a local and global solution theory for \eqref{SCGL 1}. However, as for the $P(\phi)_2$ EQFT \cite{Simon} and its stochastic quantization equation \cite{DPD}, the nonlinearity in \eqref{SCGL 1} needs a renormalization procedure to be made sense of; see Subsection~\ref{SUBSEC:renorm} below. Thus the model \eqref{SCGL 1} falls into the scope of singular stochastic PDEs. While the literature on this topic has been exponentially growing during the past decade, rather few contributions deal with non-flat spatial domains. We refer to \cite{Bailleul,BB1,BB2,BBF,BDFTa,BDM,BER,DDD,EMR,HairerSingh,Mouzard,MZ,ORT,ORTW} for other instances of singular parabolic, elliptic or hyperbolic singular PDEs on manifolds.

\subsection{Renormalization of the nonlinearity}\label{SUBSEC:renorm} 
Let us describe the renormalization procedure needed to make sense of the dynamics of \eqref{SCGL 1}. Indeed, since $\xi$ in \eqref{SCGL 1} is spatially rough, being a distribution in Sobolev or H\"older spaces of negative regularity, we expect $u$ to also be only a distribution in space. Thus $ \VV'(|u|^2)u$ is meaningless as it is. To be more precise, let us first look at the linearised equation
\begin{align}\label{linear}
\dt \Psi = (\alpha_1+i\alpha_2)\Dla \Psi +\sqrt{2\gamma}\xi.
\end{align}
Here, $\xi$ is a centred complex-valued Gaussian process on $\D'(\R; \D'(\M;\C)) $ (defined on some probability space endowed with a probability measure $\Prob$) with covariance function
\begin{align}\label{covarWN}
\E \Big[ \xi(\phi_1) \cj{ \xi(\phi_2) }   \Big] = \langle \phi_1 , \phi_2 \rangle_{L^2(\R\times\M)}.
\end{align}
for any $\phi_1,\phi_2 \in C_0^\infty(\R;C^\infty(\M))$, and $\langle \cdot, \cdot \rangle$ denotes the canonical inner product on $L^2(\R; L^2(\M;\C) )$. Thus we can extend the definition of $\xi$ to test functions belonging to $L^2(\R; L^2(\M;\C) )$ and define for $n \in \N$ the process 
\begin{align}\label{Bn}
B_n(t) = \langle \xi, \mathbf{1}_{[0,t]} \varphi_n \rangle_{L^2(\R\times\M)}, \quad t \geq 0,
\end{align}
where $\{\varphi_n\}_{n\ge 0}$ is an orthonormal basis  of $L^2(\M; \C)$ consisting of eigenfunctions of $-\Dla$ with corresponding eigenvalues $\{-\ld_n^2\}_{n \ge 0}$ which are assumed to be arranged in increasing order: $0\le \ld_0\le \ld_1\le ...\le \ld_n\to \infty$. Then from \eqref{covarWN} and \eqref{Bn} one sees that $\{B_n\}_{n \geq 0}$ is a family of mutually independent complex\footnote{This means that 
\begin{align}\label{BnRI}
B_n=\frac1{\sqrt{2}}(B_n^R+iB_n^I),
\end{align} 
with $B_n^R,B_n^I$ being standard real-valued Brownian motions independent of each other.} Brownian motions. It follows by construction that $\xi$ is the time derivative of the cylindrical Wiener process $W$ defined by
\begin{equation}\label{wiener}
W(t) = \sum_{n \geq 0} B_n(t) \varphi_n.
\end{equation}
Define 
\begin{align}\label{lambda+}
\lambda_+=\min\{\ld_n>0,~n\ge 0\},
\end{align}
and $\P_{\ge \lambda_+}$ the projection onto the space of positive eigenvalues\footnote{Indeed, $\ld_0=0$ when $A$ is exact.} frequencies:
\[ \P_{\ge \lambda_+} f = \sum_{\substack{n \ge 0 \\ \ld_n \ge \ld_+}} \jb{f, \phi_n}_{L^2} \, \varphi_n,\]
where $\jb{\cdot, \cdot}_{L^2}$ denotes the $L^2(\M, \C)$-inner product; see \eqref{inner} below. Then the\footnote{In the case $\ld_0\neq 0$, this choice makes $\Psi$ stationary as a stochastic process with values in $\Cc^s(\M)$, $s<0$; see Definition \ref{def} below.} solution to \eqref{linear} is given by the stochastic convolution
\begin{equation}\label{stochastic convolution}
\Psi(t) \deff \sqrt{2 \g} \int_{- \infty}^t S_{A,\alpha}(t-t') \P_{\ge \lambda_+}dW(t')+\mathbf{1}_{\ld_0=0}\sqrt{2\gamma}B_0(t)\varphi_0,
\end{equation}
where the semigroup
\begin{equation}
S_{A,\alpha}(t) = e^{t(\alpha_1 + i\alpha_2) \Dla }
\label{def semi-group}
\end{equation}
for $t \geq 0$ is defined through the functional calculus for $- \Dla$; see Subsection \ref{SUBSEC : analysis tools}. Then Proposition~\ref{PROP:construction stochastic objects} below shows that $\Psi\in C(\R;\Cc^{s}(\M))$ almost surely only if $s<0$. Since we expect that the solution $u$ to \eqref{SCGL 1} behaves as $\Psi$, and in particular only belongs to $C(\R;\Cc^s(\M))$ for $s<0$, there is an issue in making sense of the nonlinearity in \eqref{SCGL 1}.

To fix this issue, we thus need to renormalize the nonlinearity. From now on we focus on the case of a monomial nonlinearity: $$\VV(x)=\frac1{2m}x^m,$$ for an integer $m\ge 2$.
Then we regularize \eqref{SCGL 1} by applying a smooth frequency truncation on the noise term, and consider for any $N\in\N$ the equation
\begin{equation}\label{truncated SCGL}
\begin{cases}
\dt u_N = (\alpha_1 + i\alpha_2) \Dla  u_N - (\beta_1 + i \beta_2) | u _N|^{2m-2}u_N + \sqrt{2 \g} \P_N \xi\\
(u_N)_{|t=0}=u_0,
\end{cases}
\end{equation}
where $\P_N$ is a smooth projection onto frequencies smaller than $N$; see \eqref{PN} below. For each fixed $N\in\N$, \eqref{truncated SCGL} has a unique smooth global solution $u_N$ for $t>0$. From the previous discussion, $|u_N|^{2m-2}u_N$ diverges as $N\to \infty$, but we expect $u_N$ to behave like the linearization
\begin{equation}\label{truncated stochastic convolution}
\PN \deff \P_N \Psi.
\end{equation}
Now, for each $(t,x) \in \R_+ \times \M$, $\PN(t,x)$ is a mean-zero complex Gaussian variable, and using Itô isometry we can compute its variance:
\begin{align}\label{sigmaN}
\s_N(t,x) &= \E[| \PN (t,x) |^2 ] = \sum_{\ld_n\ge \lambda_+} \psi_0(\ld_n^2 N^{-2})^2 \frac{\g|\varphi_n(x)|^2}{\alpha_1 \ld_n^2}+\mathbf{1}_{\ld_0=0}2\gamma t|\varphi_0(x)|^2 \\
& = \frac{\gamma}{2\pi\alpha_1}\log N+O_{N,x}(1+t),\notag
\end{align}
for a cut-off function $\psi_0$ as in the definition of $\P_N$ in \eqref{PN}; the last asymptotics is a consequence of Lemma~\ref{LEM:green function 1} below. This shows how $\PN$ diverges as $N\to\infty$, for fixed $t$ and $x$. To fix this divergence in (the powers of) $\PN$, we define its Wick-ordered powers by
\begin{align}\label{renormalization stochastic objects}
\begin{split}
& :\PN^{j_1} \cj{\PN}^{j_2}:(t,x) \\
& \qquad \quad \deff 
\begin{cases} (-1)^{j_2} (j_2!) L_{j_2}^{(j_1-j_2)}( | \PN(t,x) |^2; \s_N(t,x) ) (\PN(t,x))^{j_1-j_2},& \quad j_1 \geq j_2 \\
(-1)^{j_1} (j_1!)  L_{j_1}^{(j_2-j_1)}( | \PN(t,x) |^2; \s_N(t,x) ) (\cj{\PN}(t,x))^{j_2-j_1},& \quad j_2 >j_1
\end{cases}
\end{split}
\end{align}
for any integers $j_1,j_2\ge 0$. Here, for $\ell,k \geq 0$, we write $L^{(\ell)}_k (x; \s^2) = \s^{2k} L_k^{(\ell)}(\frac{x}{\s^2}) $ where $L_k^{(\ell)}$ are the generalized Laguerre polynomials which can be defined through the generating function:
\begin{align*}
\frac{1}{(1-t)^{\ell+1}}e^{-\frac{tx}{1-t}} = \sum_{k \geq 0 } t^k L_k^{(\ell)}(x), \quad \text{for all $(t,x) \in (0,1) \times \R$}.
\end{align*}
We also note the following formula:
\begin{align}
\frac{d}{dx} L^{(\l)}_k(x) = - L^{(\l+1)}_{k-1}(x) \quad \text{for all $x \in \R$}
\label{YYY10}
\end{align}
for any $\l \ge 0$ and $k \ge 1$. We refer to \cite{OT1,Trenberth} for more details on the generalized Laguerre polynomials. In Section \ref{SEC : proba}, we study the convergence of the stochastic objects defined in \eqref{renormalization stochastic objects} above.

Finally, in order to get a well-defined nonlinearity at the level of the full truncated equation \eqref{truncated SCGL}, we replace the monomial nonlinearity $|u_N|^{2m-2}u_N$ by the Wick-ordered monomial:
\begin{equation}\label{renormalization nonlinearity}
: | u _N|^{2m-2}u_N :(t,x) \deff (-1)^{m-1} (m-1)! \, L_{m-1}^{(1)}(|u_N|^2(t,x); \s_N(t,x))u_N(t,x).
\end{equation}
Our proof strategy is then to the renormalized version of \eqref{truncated SCGL} study (see \eqref{renormalized truncated SCGL 1} below) via a first order expansion as in the celebrated work \cite{DPD}.

\subsection{Well-posedness of the renormalized dynamics}
In view of the discussion in the previous subsection, we thus consider the following equation:
\begin{equation}
\begin{split}
\dt u_N& = (\alpha_1 + i\alpha_2) \Dla u_N \\
&\qquad \quad - (\beta_1 + i \beta_2)(-1)^{m-1} (m-1)! \, L_{m-1}^{(1)}\big( |u_N|^2; \s_N \big)u_N   + \sqrt{2 \g} \P_N \xi,
\end{split}
\label{renormalized truncated SCGL 1}
\end{equation}
with some deterministic initial data $u_0$, for each $N \in \N$.

\smallskip

Our first result deals with local well-posedness for the renormalized equation \eqref{renormalized truncated SCGL 1}.
\begin{theorem} Let $\alpha_1,\gamma>0$ and $\alpha_2,\beta_1,\beta_2\in\R$. Fix $m \geq 2 $ an integer. Let $- \frac{2}{2m-1} < s_0 < 0$ and $0 < \e \ll 1$. Then, for any \eqref{renormalized truncated SCGL 1} is pathwise uniformly locally well-posed in $\Cc^{s_0}(\M;\C)$, uniformly in $N \in \N$. More precisely, for all $N \in \N$ and any given $u_0 \in \Cc^{s_0}(\M;\C)$, there exists a $\Prob$-almost surely positive stopping time $T$ independent of $N$, such that there exists a solution $u_N$ to \eqref{renormalized truncated SCGL 1}, unique in the class
\begin{align*}
\PN + C \left( [0,T]; \Cc^{s_0}(\M;\C) \right) \cap C \left((0,T]; \Cc^{2\e}(\M;\C) \right).
\end{align*}
Moreover, $\{u_N\}_{N \in \N}$ converges $\Prob$-almost surely to a non-trivial stochastic process ${u \in \Psi + C \left( [0,T]; \Cc^{s_0}(\M;\C) \right) \cap C \left((0,T]; \Cc^{2\e} (\M;\C) \right)}$ as $N$ goes to infinity.
\label{THM : LWP theory 1}
\end{theorem}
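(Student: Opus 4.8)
The plan is to run the Da Prato--Debussche first-order expansion announced after \eqref{renormalization nonlinearity}. Write $u_N=\PN+v_N$ with $\PN=\P_N\Psi$ as in \eqref{truncated stochastic convolution}; since $\P_N$ commutes with the functional calculus of $\Dla$, $\PN$ solves the linear equation \eqref{linear} with $\xi$ replaced by $\P_N\xi$, so $v_N$ solves
\[
\dt v_N=(\alpha_1+i\alpha_2)\Dla v_N-(\beta_1+i\beta_2)(-1)^{m-1}(m-1)!\,L_{m-1}^{(1)}\big(|\PN+v_N|^2;\s_N\big)(\PN+v_N),
\]
with initial datum $v_N(0)=u_0-\PN(0)$, bounded in $\Cc^{s_0}(\M;\C)$ uniformly in $N$ by Proposition~\ref{PROP:construction stochastic objects}. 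The first, essentially algebraic, step is to expand the renormalized nonlinearity. By \eqref{renormalization nonlinearity} and \eqref{renormalization stochastic objects} it equals $(-1)^{m-1}(m-1)!$ times $:\!|u_N|^{2m-2}u_N\!:\,=\,:\!u_N^{m}\cj{u_N}^{m-1}\!:$, and the binomial formula for Wick-ordered monomials yields
\[
:\!u_N^{m}\cj{u_N}^{m-1}\!:\ =\sum_{0\le a\le m}\ \sum_{0\le b\le m-1}\binom{m}{a}\binom{m-1}{b}\,:\!\PN^{a}\cj{\PN}^{b}\!:\;v_N^{m-a}\,\cj{v_N}^{m-1-b},
\]
where the stochastic factors are exactly the Wick powers of \eqref{renormalization stochastic objects}. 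By Section~\ref{SEC : proba}, each $:\!\PN^{a}\cj{\PN}^{b}\!:$ belongs to $C([0,T];\Cc^{-\delta}(\M))$ for every $\delta>0$, uniformly in $N$, and converges $\Prob$-almost surely as $N\to\infty$; we fix $\delta>0$ small.

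I then solve for $v_N$ by a fixed point. With $\theta:=\tfrac12(2\e-s_0)\in(0,1)$, work in
\[
X_T:=\Big\{v\in C([0,T];\Cc^{s_0}(\M))\cap C((0,T];\Cc^{2\e}(\M)):\ \|v\|_{X_T}:=\sup_{[0,T]}\|v(t)\|_{\Cc^{s_0}}+\sup_{(0,T]}t^{\theta}\|v(t)\|_{\Cc^{2\e}}<\infty\Big\}
\]
and consider the Duhamel map $\Gamma v(t)=S_{A,\alpha}(t)(u_0-\PN(0))-(\beta_1+i\beta_2)(-1)^{m-1}(m-1)!\int_0^tS_{A,\alpha}(t-t')\,\NN_N(\PN,v)(t')\,dt'$, $\NN_N$ denoting the expanded nonlinearity. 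The linear term lies in $X_T$ by the complex Schauder estimates for $S_{A,\alpha}(t)=e^{t(\alpha_1+i\alpha_2)\Dla}$ from Subsection~\ref{SUBSEC : analysis tools} (parabolic smoothing, which uses $\alpha_1>0$): $\|S_{A,\alpha}(t)f\|_{\Cc^{s'}}\les t^{-(s'-s)_+/2}\|f\|_{\Cc^{s}}$. For a generic term of $\NN_N$, with $c=m-a$, $d=m-1-b$, I estimate at time $t'>0$ the product $:\!\PN^{a}\cj{\PN}^{b}\!:(t')\,v(t')^{c}\cj{v(t')}^{d}$ in $\Cc^{-\delta}(\M)$ by the paraproduct and product estimates on $(\M,\gm)$ (needing $2\e>\delta$ so the resonant part is defined), getting a bound $\les(\text{stochastic data})\cdot t'^{-(c+d)\theta}\|v\|_{X_T}^{c+d}$; then $S_{A,\alpha}(t-t')$ sends $\Cc^{-\delta}$ into $\Cc^{2\e}$ with a factor $(t-t')^{-(2\e+\delta)/2}$, and $\int_0^t(t-t')^{-(2\e+\delta)/2}t'^{-(c+d)\theta}\,dt'$ converges provided $2\e+\delta<2$ and $(c+d)\theta<1$. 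Since $c+d=2m-1-a-b\le2m-1$, with equality only for the purely ``deterministic'' term $v_N^{m}\cj{v_N}^{m-1}$, the binding constraint is $(2m-1)\theta<1$, i.e.\ $s_0>2\e-\tfrac{2}{2m-1}$, which holds for $0<\e\ll1$ precisely because of the hypothesis $s_0>-\tfrac{2}{2m-1}$.

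With these multilinear bounds, a standard argument shows that for $T>0$ small $\Gamma$ maps a ball of $X_T$ into itself and contracts there — each difference $\NN_N(\PN,v_1)-\NN_N(\PN,v_2)$ carries a factor $v_1-v_2$, and the smallness comes from a positive power of $T$ — so there is a unique fixed point $v_N\in X_T$, hence a solution $u_N:=\PN+v_N$ unique in the stated class. The existence time can be chosen to depend only on $\|u_0\|_{\Cc^{s_0}}$, $\|\PN(0)\|_{\Cc^{s_0}}$ and $\max_{a,b}\|:\!\PN^{a}\cj{\PN}^{b}\!:\|_{C([0,T];\Cc^{-\delta})}$; since these are $\Prob$-a.s.\ finite and, by Section~\ref{SEC : proba}, bounded uniformly in $N$, a routine stopping-time argument produces a common $\Prob$-a.s.\ positive $T$ independent of $N$. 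Finally, since $\big(\PN(0),\,\{:\!\PN^{a}\cj{\PN}^{b}\!:\}\big)$ converges $\Prob$-a.s.\ to $\big(\Psi(0),\,\{:\!\Psi^{a}\cj{\Psi}^{b}\!:\}\big)$ in the relevant topologies, continuity of the fixed point in the data (proved by the same estimates) gives $v_N\to v$ in $X_T$, whence $u_N\to u:=\Psi+v$, which solves the limiting renormalized equation and is non-trivial. The main obstacle I anticipate is twofold: pinning down the Wick/Laguerre expansion so that exactly the objects constructed in Section~\ref{SEC : proba} appear; and carrying out the paraproduct and Schauder estimates on the curved domain $(\M,\gm)$ with the magnetic Laplacian, uniformly in $N$, together with the time-weight bookkeeping near $t=0$ that forces the sharp threshold $s_0>-\tfrac{2}{2m-1}$ — the geometric/analytic input adapted to $-\Dla$ is what makes the argument heavier than in the flat case.
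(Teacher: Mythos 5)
Your proposal follows essentially the same route as the paper: the Da Prato--Debussche decomposition $u_N=\PN+v_N$, the Laguerre/Wick binomial expansion (Lemma~\ref{LEM:sum}) producing exactly the system \eqref{vN}, a contraction argument in the weighted space $C([0,T];\Cc^{s_0})\cap C((0,T];\Cc^{2\e})$ with weight $t^{\frac{2\e-s_0}{2}}$ via the Schauder and product estimates, yielding the same threshold $s_0>-\tfrac{2}{2m-1}$, and then $\Prob$-a.s.\ convergence from the uniform-in-$N$ stochastic bounds together with Lipschitz continuity of the solution map (the paper's $\Si_M$/Borel--Cantelli argument). The only small deviation is that you invoke uniform-in-$N$ bounds for $:\!\PN^{a}\cj{\PN}^{b}\!:$ in $C([0,T];\Cc^{-\delta})$, whereas Proposition~\ref{PROP:construction stochastic objects} only provides $L^q([0,T];\Cc^{-\e})$ bounds for large finite $q$ (time continuity is proved only for $\Psi$ itself); the paper compensates by H\"older in time inside the Duhamel integral, an adjustment your estimates absorb without changing the threshold.
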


\begin{remark}\rm We point out the following facts.

\smallskip

\noi
\textup{(i)} Note that the previous result does not depend on the sign of $\beta_1$; namely, it applies to both focusing and defocusing equations. The global-in-time statement however, will only hold in the defocusing case ($\beta_1 > 0$), see Theorem \ref{THM : deterministic GWP} below.

\smallskip

\noi
\textup{(ii)} The limiting process $u$ provided by Theorem~\ref{THM : LWP theory 1} can then be interpreted as the unique solution $u$ in the class $ \Psi + C \left( [0,T]; \Cc^{s_0}(\M;\C) \right) \cap C \left((0,T]; \Cc^{2\e} (\M;\C) \right)$ to the renormalized equation
\begin{align}\label{WSCGL}
\dt u = (\alpha_1 + i\alpha_2) \Dla u - (\beta_1 + i \beta_2)\,:\!|u|^{2m}u\!:\,  + \sqrt{2 \g} \xi,
\end{align}
where the renormalized nonlinearity $\,:\!|u|^{2m}u\!:\,$ makes sense in the class $ \Psi + C \left( [0,T]; \Cc^{s_0}(\M;\C) \right) \cap C \left((0,T]; \Cc^{2\e} (\M;\C) \right)$. See Remark~1.3 in \cite{ORT}.
\end{remark}

We show next that we can extend the local flow constructed in Theorem \ref{THM : LWP theory 1} globally in time in some regime of parameters.

\begin{theorem}Let $\alpha_1, \beta_1, \gamma>0$ and $\alpha_2,\beta_2\in\R$. Fix $m \geq 2$ an integer and define the ratio $r = \frac{\alpha_1}{|\alpha_2|} \in(0,\infty]$.\footnote{Namely, we set $r = \infty$ if $\al_2 = 0$.} Let $0 < -s_0 <  \frac{2}{2m-1}$ and assume that
	\begin{equation}
2m-1 < 2 + 2 \left(   r^2 + r \sqrt{1 + r^2}  \right).
\label{condition on r}
\end{equation}
Then, the limit $u$ of the dynamics \eqref{renormalized truncated SCGL 1} provided by Theorem~\ref{THM : LWP theory 1} can be extended globally in time. More precisely, for any $u_0 \in \Cc^{s_0}(\M)$ and any target time $T >0$, there $\Prob$-almost surely exists a unique solution $u_N$ to \eqref{renormalized truncated SCGL 1} on $[0,T]$ for any $N \geq 1$. 
Moreover, $\{u_N\}_{N \in \N}$ converges $\Prob$-almost surely to a stochastic process $u \in \Psi + C \left( [0,T]; \Cc^{s_0}(\M) \right) \cap C \left((0,T]; \Cc^{2\e} (\M) \right)$ as $N$ goes to infinity.
\label{THM : deterministic GWP}
\end{theorem}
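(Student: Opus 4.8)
\textbf{Overall strategy and Step 1 (reduction to an a priori bound).} The plan is to combine a Da Prato--Debussche first order expansion with a \emph{pathwise} a priori energy estimate of the kind used for deterministic global well-posedness of defocusing complex Ginzburg--Landau equations, the ratio condition \eqref{condition on r} being exactly the threshold at which the latter closes once the dispersion $\alpha_2$ is switched on. As in the proof of Theorem~\ref{THM : LWP theory 1}, write $u_N = \PN + v_N$ with $\PN$ as in \eqref{truncated stochastic convolution}; then $v_N$ solves
\begin{equation*}
\dt v_N = (\alpha_1 + i \alpha_2)\Dla v_N - (\beta_1 + i\beta_2)\Big( |v_N|^{2m-2} v_N + \mathcal{R}_N[v_N] \Big),
\qquad
\mathcal{R}_N[v_N] = \sum c_{\vec{\jmath},\vec{k}}\, \Xi_N^{(j_1,j_2)}\, v_N^{k_1}\cj{v_N}^{k_2},
\end{equation*}
where $\Xi_N^{(j_1,j_2)} \deff\, :\!\PN^{j_1}\cj{\PN}^{j_2}\!:$, the finite sum runs over $j_1+j_2\ge 1$ with $j_1+j_2+k_1+k_2=2m-1$, and by the analysis of Section~\ref{SEC : proba} the $\Xi_N^{(j_1,j_2)}$ converge $\Prob$-a.s.\ in $C([0,T];\Cc^{-\kappa}(\M))$ for every $\kappa>0$. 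It then suffices to establish an a priori bound $\sup_{t\in[0,T]}\|v_N(t)\|_{L^{2m-1}(\M)}\le C(\omega,T,\|u_0\|_{\Cc^{s_0}})$, $\Prob$-a.s.\ and uniform in $N$: since $-s_0 < \tfrac{2}{2m-1}$, the two-dimensional embedding $L^{2m-1}(\M)\hookrightarrow\Cc^{s_0}(\M)$ turns this into a bound on $\|v_N(t)\|_{\Cc^{s_0}}$, so the uniform local theory of Theorem~\ref{THM : LWP theory 1} and the parabolic smoothing of $e^{t(\alpha_1+i\alpha_2)\Dla}$ allow the local solutions to be iterated up to time $T$ with a uniform bound in $C((0,T];\Cc^{2\e}(\M))$, after which one passes to the limit $N\to\infty$ as before.

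\textbf{Step 2 (the energy estimate; where \eqref{condition on r} is used).} Take $p = 2m-1$. Pairing the $v_N$-equation with $|v_N|^{p-2}\cj{v_N}$ in $L^2(\M)$ and taking real parts yields
\begin{equation*}
\frac1p\frac{d}{dt}\|v_N\|_{L^p}^p + \beta_1\|v_N\|_{L^{4m-3}}^{4m-3}
= \Re\big[(\alpha_1 + i\alpha_2)\langle \Dla v_N, |v_N|^{p-2} v_N\rangle\big]
- \Re\big[(\beta_1+i\beta_2)\langle \mathcal{R}_N[v_N], |v_N|^{p-2} v_N\rangle\big],
\end{equation*}
the defocusing leading term producing the good coercive norm $\beta_1\|v_N\|_{L^{p+2m-2}}^{p+2m-2}=\beta_1\|v_N\|_{L^{4m-3}}^{4m-3}$. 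Writing $v_N=\rho e^{i\ta}$, integrating by parts and rewriting in terms of $d\rho$ and the covariant phase $d\ta-A$, the first term on the right is a non-positive quadratic form in these quantities perturbed by a cross term with coefficient $\propto\alpha_2(p-2)$; by the arithmetic--geometric mean inequality it is $\le -c\|\nabla(|v_N|^{p/2})\|_{L^2}^2+C\|v_N\|_{L^p}^p$ with $c>0$ \emph{precisely when} $\tfrac{|\alpha_2|}{\alpha_1}<\tfrac{2\sqrt{p-1}}{p-2}$, which for $p=2m-1$ is exactly \eqref{condition on r}. Under this condition the left-hand side is reinforced by the coercive quantities $\|\nabla(|v_N|^{p/2})\|_{L^2}^2$ and $\|v_N\|_{L^{4m-3}}^{4m-3}$, which together with $\|v_N\|_{L^p}^p$ control $\||v_N|^{p/2}\|_{H^1(\M)}^2$ and hence, by the two-dimensional Sobolev inequality $\|f\|_{L^q(\M)}\les\sqrt{q}\,\|f\|_{H^1(\M)}$, every norm $\|v_N\|_{L^q(\M)}$, $q<\infty$.

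\textbf{Step 3 (absorbing the rough remainder and concluding).} Each term of $\langle\mathcal{R}_N[v_N],|v_N|^{p-2}v_N\rangle$ is $\langle\Xi_N^{(j_1,j_2)},w\rangle$ with $|w|=|v_N|^{k_1+k_2+p-1}$ of $v_N$-degree at most $4m-4<4m-3$. Using $|\langle\Xi,w\rangle|\les\|\Xi\|_{\Cc^{-\kappa}}\|w\|_{B^\kappa_{1,1}(\M)}$, the fractional Leibniz rule, and the fact that a gradient falling on $w$ costs a factor $\||v_N|^{(p-2)/2}D_A v_N\|_{L^2}\les\|\nabla(|v_N|^{p/2})\|_{L^2}$ times an $L^q$-norm of $v_N$, one bounds each such term — for $\kappa>0$ small — by $\delta\big(\|\nabla(|v_N|^{p/2})\|_{L^2}^2+\beta_1\|v_N\|_{L^{4m-3}}^{4m-3}\big)+C_\delta\,Q\big(\sup_{[0,T]}\|\Xi_N\|_{\Cc^{-\kappa}}\big)\big(1+\|v_N\|_{L^p}^{P}\big)$ for a polynomial $Q$ and some exponent $P<4m-3$, interpolating the intermediate $L^q$-norms between $L^p$, the good norm $L^{4m-3}$ and the $H^1$-gained norms of Step~2 and using Young's inequality in both coercive quantities. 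Choosing $\delta$ small, absorbing the coercive contributions, and using $\|v_N\|_{L^p}\les\|v_N\|_{L^{4m-3}}$ together with $P<4m-3$ to absorb the residual term into $\beta_1\|v_N\|_{L^{4m-3}}^{4m-3}$ up to a quantity integrable in time, we reach $\tfrac{d}{dt}\|v_N(t)\|_{L^p}^p\les Q(\omega)$, $\Prob$-a.s.\ and uniformly in $N$; integrating gives the a priori bound of Step~1.

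\textbf{Expected main obstacle.} The technical heart is the interplay in Steps~2--3. On one side, $\alpha_2$ forces the dispersive contribution to the $L^p$ energy identity to be coercive only under a constraint relating $p$ and $r=\alpha_1/|\alpha_2|$, which at the exponent $p=2m-1$ — imposed by the embedding $L^{2m-1}\hookrightarrow\Cc^{s_0}$ needed to rejoin the local theory — is exactly \eqref{condition on r}. On the other, one must verify that the two coercive quantities genuinely dominate every rough, superlinear term of $\mathcal{R}_N$ (of $v_N$-degree up to $4m-4$, tested against a distribution of negative regularity) after the Gagliardo--Nirenberg/Young bookkeeping; this is where two-dimensionality of $\M$ and the smallness of $\kappa$ enter, and where the estimates must be carried out with care.
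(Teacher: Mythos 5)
Your overall strategy coincides with the paper's: the Da Prato--Debussche decomposition $u_N=\Psi_N+v_N$, an $L^p$ energy estimate obtained by pairing the $v_N$-equation with $|v_N|^{p-2}v_N$, coercivity of the resulting quadratic form under a constraint linking $p$ and $r$, absorption of the rough Wick terms into the two coercive quantities, and continuation of the local theory (this is the content of Lemmas \ref{LEM:a priori estimate} and \ref{LEM:a priori 2} and Propositions \ref{PROP:LWP2} and \ref{PROP:deterministic}). However, your Step 1 contains a genuine gap in the choice $p=2m-1$. First, the claimed embedding $L^{2m-1}(\M)\hookrightarrow \Cc^{s_0}(\M)$ is false under the hypothesis $-s_0<\frac{2}{2m-1}$: in two dimensions $L^p$ only embeds into $\Cc^{-2/p}$ (Lemma \ref{LEM:Besov}~(iii)), so $L^{2m-1}\hookrightarrow\Cc^{s_0}$ would require $s_0\le -\frac{2}{2m-1}$, i.e.\ the \emph{reverse} inequality; with $s_0>-\frac{2}{2m-1}$ the space $\Cc^{s_0}$ is strictly stronger than what an $L^{2m-1}$ bound controls, so your a priori bound does not allow you to re-enter the $\Cc^{s_0}$ local theory of Theorem \ref{THM : LWP theory 1}. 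Second, even an $L^p$-based continuation at $p=2m-1$ does not close: in the Duhamel estimate the term $\Xi_{m,m-1}\,|v|^{2m-2}v$ produces a factor $(t')^{-(2m-1)(\e+\frac1p)}$, which is non-integrable at $p=2m-1$ for any $\e>0$; this is precisely why Proposition \ref{PROP:LWP2} (local theory and blow-up alternative in $L^p$) requires $p>2m-1$ strictly. You have thus misread the role of \eqref{condition on r}: it is not merely the coercivity condition of the quadratic form at $p=2m-1$, but the condition guaranteeing a nonempty window $2m-1<p<2+2\bigl(r^2+r\sqrt{1+r^2}\bigr)$ (see \eqref{condition-p} and Remark \ref{RMK:expo}), so that a single exponent $p$ simultaneously admits a subcritical $L^p$ local theory and a non-negative form $B_{p,\eta}$. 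The repair is exactly the paper's route: fix such a $p$, use parabolic smoothing to place $v(t_0)\in\Cc^{2\e}\subset L^p$ for any $t_0>0$, run the a priori bound at that $p$, and conclude via the blow-up alternative of Proposition \ref{PROP:LWP2}; no embedding back into $\Cc^{s_0}$ is needed.

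Two further points are glossed and should be addressed, though they are secondary. The magnetic Laplacian generates extra first-order terms in $A$ in the energy identity (the paper keeps $\langle |v|^{p-2}vA,dv\rangle_{L^2(T^\star\M)}$ and $\langle d^*(vA),|v|^{p-2}v\rangle_{L^2(\M)}$ and absorbs them by Cauchy--Schwarz and Young into the coercive quantities); your polar rewriting $v=\rho e^{i\ta}$ is plausible but needs care at the zero set of $v$. Moreover, the energy computation requires $C^1_tC^2_x$ regularity, which the limiting $v$ does not have a priori; the paper justifies it by replacing $\Xi$ with the smoothed data $\P_N\Xi$, applying the estimates to the smooth approximants, and invoking continuity of the solution map. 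With the exponent corrected and these points added, your argument becomes the paper's proof.
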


We conclude this introduction with several remarks.

\begin{remark}\label{RMK:expo}\rm The technical condition \eqref{condition on r} guarantees the existence of $p \ge 1$ such that
\begin{equation}\label{condition-p}
2m-1 < p < 2 + 2 \left(   r^2 + r \sqrt{1 + r^2}  \right).
\end{equation}
On the one hand, the lower bound $p > 2m-1$ in \eqref{condition on r} is such that we have an $L^p(\M)$ local well-posedness theory; see Proposition \ref{PROP:LWP2}. On the other hand, the bounds $2 - 2 \left(   r^2 + r \sqrt{1 + r^2}  \right) <p < 2 + 2 \left(   r^2 + r \sqrt{1 + r^2}  \right)$ allows us to get a suitable a priori estimate for our solution theory; see Lemmas \ref{LEM:a priori estimate} and \ref{LEM:a priori 2} below.
\end{remark}

\begin{remark}
\rm
We point out that our renormalization ``constant" $\sigma_N$ \eqref{sigmaN} appearing in \eqref{renormalization nonlinearity} actually depends on both $t\geq 0$ and $x\in\M$, as in \cite{ORT,OT1}. As emphasized in \cite{BDFTa}, it is more physically relevant to only have a ``locally covariant" Wick renormalization, i.e. independent of $t$ and $x$. Since we show in Lemma~\ref{LEM:green function 1} below the estimate $\sigma_N(t,x)=\frac{\gamma}{2\pi\alpha_1}\log N +O_{N,x}(1+t)$, in the cubic case $m=2$, we could indeed consider the alternative renormalized nonlinearity $(|u_N|^2-2C_N)u_N$ with $C_N=\frac{\gamma}{2\pi\alpha_1}\log N$, instead of ${\big(|u_N|^2-\sigma_N(t,x)\big)u_N}$. The difference between these two renormalization procedures, namely the term $(\sigma_N(t,x)-C_N)u_N$, is a linear perturbative term and does not affect our analysis. Hence, the local and global results in Theorems~\ref{THM : LWP theory 1} and~\ref{THM : deterministic GWP} hold for this alternative stationary renormalization procedure in the cubic case $m=2$. In the case of a higher order nonlinearity $m\geq 3$, it is less clear to us how to easily handle a locally covariant renormalization constant, so we sticked to the renormalization of the nonlinearity defined in \eqref{renormalization nonlinearity}. See also the discussion on this point in \cite{HairerSingh}.
\end{remark}

The geometric framework and analytic toolbox is recalled in Section~\ref{SEC : analysis tools} below. Section~\ref{SEC : proba} discusses the construction of the stochastic objects $:\Psi^{j_1}\overline{\Psi}^{j_2}:$ \eqref{renormalization stochastic objects}. The local (Theorem~\ref{THM : LWP theory 1}) and global (Theorem~\ref{THM : deterministic GWP}) well-posedness results are then discussed in Section~\ref{SEC : well-posedness }. 

\begin{ackno}\rm 
The authors would like to thank Tadahiro Oh for suggesting this problem.
T.R. was partially supported by DFG through the CRC 1283 ``Taming uncertainty and profiting from randomness
and low regularity in analysis, stochastics and their applications.'' and by the ANR project Smooth ANR-22-CE40-0017.
Y.Z. was partially supported by the European Research Council (grant no. 864138 “SingStochDispDyn”). Y.Z. and the chaire of probability and PDEs at EPFL.

\end{ackno}
\section{Analytic toolbox} \label{SEC : analysis tools}
\subsection{Notations and first properties}\label{SUBSEC:geom}
We write $ A \les B $ to denote an estimate of the form $ A \leq CB $. 
Similarly, we write  $ A \sim B $ to denote $ A \les B $ and $ B \les A $ and use $ A \ll B $ 
when we have $A \leq c B$ for small $c > 0$. We may write $A \les_\ta B$ for $A \leq C B$ with $C = C(\ta)$ 
if we want to emphasize the dependence of the implicit constant on some parameter $\ta$. 
We use $C, c > 0$, etc.~to denote various constants whose values may change line by line.

\subsection{Geometric background}

We now start by describing the geometric context, for which we refer to \cite{Jost}. Let $(\M,\gm)$ be a two-dimensional closed (compact, boundaryless) orientable smooth Riemannian manifold. We write $T\M$ and $T^*\M$ for its tangent and cotangent bundles. Throughout the paper, we fix a smooth metric $\gm$, which is a twice covariant tensor field, i.e. a smooth section of the bundle $T^*\M\otimes T^*\M$, given in local coordinates as (using Einstein summation convention)
$$\gm  =\gm_{j,k}dx_j\otimes dx_k,$$ with $(\gm_{j,k})_{j,k}$ smooth and taking value in the set of positive symmetric definite matrices. In particular $(\gm_{j,k})$ is invertible and its inverse is denoted by 
\begin{align*}
\big(\gm^{j,k}(x) \big) = \big(\gm_{j,k}(x)\big)^{-1}.
\end{align*}
We also write
\begin{align*}
|\gm(x)| = \det \big(\gm_{j,k}(x)\big)\ge c >0.
\end{align*}
Throughout this text, we write the volume form $dV = |\gm(x)|^{\frac12}dx$.

The inner product $\langle\cdot,\cdot\rangle_{\T\M}$ on vector fields is locally given by
$$\langle X,Y\rangle_{T\M}=\gm_{j,k}X^j\cj{Y^k}$$
for any vector fields $X=X^j\partial_j$, $Y=Y^k\partial_k$, i.e. smooth sections of $T\M$. Similarly, for any 1-forms $A=A_jdx^j$ and $B=B_kdx^k$, i.e. smooth sections of $T^*\M$, we have
$$\langle A,B\rangle_{T^*\M}=\gm^{j,k}A_j\cj{B_k}.$$

\subsection{Spectral properties of the magnetic Laplacian}
To each smooth unitary connection $D_A = d-iA: u\in C^\infty(\M)\mapsto (\partial_ju  -iuA_j)dx^j\in T^*\M$,  where $A_j$ are smooth, we then associate the magnetic Laplace-Beltrami operator locally defined by
\begin{align}\label{Delta}
\begin{split}
 \Dla  u & = D_A^*D_Au= |\gm|^{-\frac12}  \sum_{1 \leq j,k \leq 2}  \dd_j(|\gm|^{\frac12}\gm^{j,k}\dd_k u\big) \\
& \qquad \quad +i\gm^{j,k}A_k\partial_j u-i\gm^{j,k}\partial_j(A_ku)+(A_1^2+A_2^2)u
\end{split}
\end{align}
for any smooth function $u\in C^{\infty}(\M;\C)$. This defines a non-positive self-adjoint operator on $L^2(\M)$, which is known (see e.g. \cite{Shi}) to have a compact resolvent. Thus for each smooth $A$ there exists a basis $\{\varphi_n\}_{n\ge 0}\subset C^{\infty}(\M)$ of $L^2(\M;\C)$ consisting of complex-valued eigenfunctions of $-\Dla $ associated with the corresponding eigenvalues $\{-\ld_n^2\}_{n \ge 0}$ arranged in increasing order: $0\le \ld_0\le \ld_1\le ...\le \ld_n$ and satisfying $\ld_n\to\infty$ as $n \to \infty$. Moreover, any $u\in\D'(\M)$ can be expanded (in the sense of distributions) as
\begin{align*}
u = \sum_{n\ge 0}\langle u,\varphi_n\rangle_{\mathcal{D}',\mathcal{D}} \cdot \varphi_n,
\end{align*}
where $\langle\cdot,\cdot\rangle$ denotes the usual duality pairing between $\D'(\M)$ and $\D(\M)$ which coincides with the $L^2(\M;\C)$-inner product 
\begin{align}\langle u,\varphi_n\rangle_{L^2}=\int_\M u\cj{\varphi_n}dV
\label{inner}
\end{align}
when $u\in L^2(\M;\C)$. Last but not least, recall that the magnetic Laplacian $\Dla$ admits \emph{gauge invariance}: for any $f\in C^\infty(\M;\R)$ it holds $e^{-if}\Dla e^{if} = \Delta_{A+df}$ in the sense of operators \cite[Proposition 3.2]{Shi}. 
In the following, we write $A\sim B$ if $A$ and $B$ are gauge equivalent, namely $A=B+df$ for some smooth $f$. Shigekawa \cite[Theorem 4.2]{Shi} gives a geometric characterisation of gauge classes; of particular interest to us is that the lowest eigenvalue $\lambda_0$ of $\Dla$ is positive except when $A$ is exact.

In our analysis, we will need some information of the localization of the eigenvalues. This can be achieved through studying the spectral function, which is defined as 
\begin{align*}
e(x,y,\Ld^2) \deff \sum_{\ld_n^2 \leq \Ld^2} \ph_n(x)\cj{\ph_n(y)},
\end{align*}
for $(x,y)\in\M\times\M$ and $\Ld\in\R$. We have the following behaviour of the spectral function on the diagonal (see \cite[Theorem 1.1]{Hormander1}; note that here the $\ld_n$'s are the square roots of the eigenvalues of $-\Dla$).
\begin{lemma} \label{LEM:loc eig} 
The following asymptotics hold uniformly in $x\in\M$ as $\Ld\to\infty$:
\begin{equation} \label{loc eig}
e(x,x,\Ld^2) = c \Ld^2 + O(\Ld),
\end{equation}
for some constant $c>0$ only depending on $(\M,\gm)$.
\end{lemma}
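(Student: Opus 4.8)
The statement to prove is Lemma~\ref{LEM:loc eig}, the pointwise Weyl law with sharp remainder $e(x,x,\Ld^2) = c\Ld^2 + O(\Ld)$ uniformly in $x$. This is exactly H\"ormander's local Weyl law specialized to the magnetic Laplacian on a closed surface, so the plan is essentially to reduce to the setting of \cite{Hormander1} and check that the magnetic perturbation changes nothing structurally. First I would record that $-\Dla$ is a second-order elliptic self-adjoint positive (or nonnegative) operator on the compact manifold $\M$ with smooth coefficients, whose principal symbol is $|\xi|_\gm^2 = \gm^{j,k}(x)\xi_j\xi_k$ — identical to that of the Laplace--Beltrami operator, since the magnetic terms $i\gm^{j,k}A_k\partial_j - i\gm^{j,k}\partial_j(A_k\cdot) + |A|^2$ in \eqref{Delta} are of order at most one. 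Thus $P \deff \sqrt{-\Dla}$ is a first-order self-adjoint elliptic pseudodifferential operator with principal symbol $|\xi|_\gm$, and its eigenvalues are precisely the $\ld_n$ (with $\ld_0$ possibly zero, which affects only finitely many terms and hence is harmless for the asymptotics).

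The core of the argument is the standard Fourier integral operator / wave equation parametrix method: one studies $\cos(tP)$ (or $e^{itP}$) for small $|t|$, constructs its Schwartz kernel microlocally as an FIO associated to the geodesic flow of $\gm$, and applies a Tauberian theorem to the spectral measure $d_\Ld e(x,x,\Ld^2)$ after smoothing in $\Ld$ against a test function whose Fourier transform is compactly supported near $t=0$. Concretely, the key steps are: (i) write $e(x,y,\Ld^2) = \sum_{\ld_n \le \Ld}\ph_n(x)\cj{\ph_n(y)}$ and note $\sum_n \chi(\ld_n)\ph_n(x)\cj{\ph_n(y)}$ is, for $\chi \in \S(\R)$ even with $\widehat\chi$ supported in a small interval $(-\eps_0,\eps_0)$, the kernel of $\chi(P) = \frac{1}{2\pi}\int \widehat\chi(t)\cos(tP)\,dt$; (ii) use the Hadamard/Lax parametrix for the half-wave (or wave) propagator on $(\M,\gm)$ — valid for $|t|$ smaller than the injectivity radius, which is where the magnetic connection enters only through lower-order transport equations along geodesics and therefore does not affect the leading term — to get the on-diagonal expansion $\big[\chi(P)\big](x,x) = (2\pi)^{-2}\int_{\R^2}\widehat\chi(0)\,\ind_{|\xi|_\gm \le \ld}\,d\xi \cdot (\text{leading}) + O(1)$ type asymptotics; (iii) invoke H\"ormander's Tauberian theorem, using that $e(x,x,\Ld^2)$ is monotone in $\Ld$ and that its smoothed derivative has a uniform bound, to pass from the smoothed asymptotics to the sharp one-term bound $e(x,x,\Ld^2) = c(x)\Ld^2 + O(\Ld)$, with $c(x) = (2\pi)^{-2}\Area(\{\xi : |\xi|_\gm(x) \le 1\}) = \frac{1}{4\pi}|\gm(x)|^{-1/2}\cdot(\text{const})$; (iv) observe that the $O(\Ld)$ bound is uniform in $x$ because all parametrix constructions and the Tauberian estimate depend only on finitely many derivatives of the coefficients $\gm_{j,k}, A_j$, which are bounded on the compact $\M$, and because $c(x)$ is continuous hence one may as well absorb its oscillation — actually for the statement as written it suffices that $c(x)\Ld^2$ be replaced by a single constant $c\Ld^2$ only if $\M$ has constant such density; more carefully, the cited \cite[Theorem~1.1]{Hormander1} gives the remainder uniformly with an $x$-dependent leading coefficient, and since here we only claim $e(x,x,\Ld^2) = c\Ld^2 + O(\Ld)$ I would note this holds with $c = \sup_x c(x)$ absorbing the difference $c(x)-c = O(1)$ into the remainder (or, if one insists on genuine asymptotics, restate with $c = c(x)$ — the application in Lemma~\ref{LEM:green function 1} only uses the $\log N$ order, for which either reading is fine).

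The main obstacle, such as it is, is purely bookkeeping: verifying that the lower-order magnetic terms in \eqref{Delta} genuinely do not disturb the two-term structure of the parametrix. This is handled by the usual observation that in the Hadamard parametrix the amplitude is expanded in decreasing order, the leading amplitude is governed only by the principal symbol (hence independent of $A$), and the first correction — which would enter the $O(\Ld)$ remainder — is still $O(1)$ on the diagonal after integration, so it is absorbed. Everything else is a direct citation of \cite[Theorem~1.1]{Hormander1}; indeed the cleanest write-up is simply: \emph{$-\Dla$ is a smooth elliptic self-adjoint second-order operator on the closed surface $(\M,\gm)$ with the same principal symbol as $-\Delta_\gm$, so \cite[Theorem~1.1]{Hormander1} applies verbatim and yields \eqref{loc eig} with $c$ depending only on $(\M,\gm)$ (the magnetic potential affecting only the remainder), uniformly in $x\in\M$.} I would present the proof at roughly that level of detail, perhaps adding one sentence recalling why $P = \sqrt{-\Dla}$ is a well-defined first-order elliptic operator (functional calculus plus ellipticity of $-\Dla$, bearing in mind the possible zero eigenvalue contributes a finite-rank smoothing error).
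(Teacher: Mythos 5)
Your approach coincides with the paper's: the paper gives no proof of Lemma~\ref{LEM:loc eig} at all, it simply invokes H\"ormander's local Weyl law \cite[Theorem 1.1]{Hormander1}, observing (as you do) that $-\Dla$ is a self-adjoint elliptic second-order operator whose principal symbol agrees with that of $-\Delta_\gm$, the magnetic terms being of lower order. Your sketch of the underlying wave-parametrix/Tauberian argument is accurate but not needed for the citation-level proof.

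One caveat in your write-up: your proposed fix for the $x$-dependence of the leading coefficient — taking $c=\sup_x c(x)$ and ``absorbing $c(x)-c=O(1)$ into the remainder'' — does not work, since $(c(x)-c)\Ld^2$ is of size $\Ld^2$, not $O(\Ld)$. The issue is spurious, however: since the eigenfunctions are orthonormal in $L^2(\M,dV)$, the leading term in H\"ormander's theorem is $(2\pi)^{-2}$ times the volume of $\{\xi\in T_x^*\M:\ |\xi|_x\le\Ld\}$ computed with respect to the density on $T_x^*\M$ dual to $dV$ (equivalently, work in coordinates with $|\gm(x)|=1$ at the base point), which equals $\pi\Ld^2$ for every $x$; hence $c=\frac{1}{4\pi}$ is genuinely independent of $x$ and the statement as written follows. (For the way the lemma is used in Corollary~\ref{COR:avg eig} and Lemma~\ref{LEM:green function 1}, even a bounded $x$-dependent coefficient would suffice, as you note, but the clean resolution is the invariant formulation above rather than the absorption trick.)
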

In particular, integrating \eqref{loc eig} on $\M$, we obtain some useful information on the localization of the eigenvalues: 
\[ n+1 =   \# \big\{k: \ld_k \leq \ld_n \big\}  = c \ld_n ^ 2 + O(\ld_n) \]
This implies that 
\begin{align}\label{asympt ldn}
\ld_n \sim n^{\frac12}
\end{align}
 as $n\to\infty$. 

As a direct consequence, we obtain the boundedness in average of the eigenfunctions $\ph_n$.
\begin{corollary}\label{COR:avg eig}
For any $\nu\in\R$, there exists $C>0$ such that for any $\Ld>0$ and any $x\in\M$, it holds
\begin{align*}
\sum_{\ld_n\in (\Ld,\Ld+1]}\frac{|\varphi_n(x)|^2}{\ld_n^\nu} \le C\sum_{\ld_n\in (\Ld,\Ld+1]}\frac1{\ld_n^\nu}.
\end{align*}
\end{corollary}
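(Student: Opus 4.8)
The plan is to deduce Corollary~\ref{COR:avg eig} directly from the asymptotics \eqref{asympt ldn} for the eigenvalues together with the on-diagonal bound \eqref{loc eig} for the spectral function. The key observation is that for a fixed $\nu\in\R$, on a dyadic (or unit-length) window $\ld_n\in(\Ld,\Ld+1]$ the weights $\ld_n^{-\nu}$ are all comparable to one another — more precisely, $\ld_n^{-\nu}\sim\Ld^{-\nu}$ uniformly over $n$ with $\ld_n\in(\Ld,\Ld+1]$, with implicit constants depending only on $\nu$ (for $\Ld\ge 1$; the range $\Ld\in(0,1]$ involves only finitely many eigenvalues by \eqref{asympt ldn} and is handled trivially). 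Hence both sides of the claimed inequality are, up to constants, equal to $\Ld^{-\nu}$ times the \emph{unweighted} count $\#\{n:\ld_n\in(\Ld,\Ld+1]\}$ and the \emph{unweighted} eigenfunction sum $\sum_{\ld_n\in(\Ld,\Ld+1]}|\varphi_n(x)|^2$ respectively, so it suffices to prove
\begin{align}\label{plan:main}
\sum_{\ld_n\in(\Ld,\Ld+1]}|\varphi_n(x)|^2 \;\les\; \#\{n:\ld_n\in(\Ld,\Ld+1]\}
\end{align}
uniformly in $x\in\M$ and $\Ld>0$. (The reverse-type inequality — i.e. that the right side of the corollary controls the left — then also needs the comparison $\ld_n^{-\nu}\sim\Ld^{-\nu}$, which is the same elementary fact.)

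To prove \eqref{plan:main}, first I would write the left-hand side as a difference of spectral functions on the diagonal:
\[
\sum_{\ld_n\in(\Ld,\Ld+1]}|\varphi_n(x)|^2 = e(x,x,(\Ld+1)^2)-e(x,x,\Ld^2).
\]
By Lemma~\ref{LEM:loc eig}, $e(x,x,(\Ld+1)^2)=c(\Ld+1)^2+O(\Ld+1)$ and $e(x,x,\Ld^2)=c\Ld^2+O(\Ld)$ uniformly in $x$; subtracting gives that the left side of \eqref{plan:main} is $2c\Ld+O(\Ld)=O(\Ld)$, uniformly in $x\in\M$ and $\Ld\ge 1$. On the other hand, integrating \eqref{loc eig} over $\M$ (as already done in the text to obtain $n+1=c\ld_n^2+O(\ld_n)$) and taking the same difference shows $\#\{n:\ld_n\in(\Ld,\Ld+1]\}=2c\Ld+O(\Ld)=\Theta(\Ld)$ for $\Ld$ large; in particular it is $\gtrsim\Ld$. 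Combining the two gives \eqref{plan:main} for large $\Ld$, and for bounded $\Ld$ there are only finitely many terms on both sides (again by \eqref{asympt ldn}), and the count on the right is at least $1$ on any window that contains an eigenvalue — while if the window contains no eigenvalue both sides vanish — so the inequality is trivial there after adjusting the constant.

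The only mild subtlety, and the place I'd be most careful, is the uniformity in $x$: the error terms $O(\Ld)$ in Lemma~\ref{LEM:loc eig} are asserted to be uniform in $x\in\M$, so the bound $e(x,x,(\Ld+1)^2)-e(x,x,\Ld^2)=O(\Ld)$ is genuinely uniform, and no compactness argument beyond what is already packaged in Lemma~\ref{LEM:loc eig} is needed. A secondary point is simply bookkeeping the case $\Ld<1$ versus $\Ld\ge1$ and the sign of $\nu$ in the elementary comparison $\ld_n^{-\nu}\sim\Ld^{-\nu}$ on the window; this is routine. So I expect the proof to be short: reduce to the unweighted statement \eqref{plan:main} by the window-comparison of the weights, then read \eqref{plan:main} off the two instances of Lemma~\ref{LEM:loc eig} (pointwise on the diagonal, and integrated over $\M$).
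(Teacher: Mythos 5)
Your proposal is correct and follows essentially the same route as the paper: compare the weights $\ld_n^{-\nu}\sim\Ld^{-\nu}$ on the unit window, write the unweighted eigenfunction sum as $e(x,x,(\Ld+1)^2)-e(x,x,\Ld^2)$ and bound it by $O(\Ld)$ via Lemma~\ref{LEM:loc eig}, then compare with the eigenvalue count $\#\{k:\Ld<\ld_k\le\Ld+1\}\gtrsim\Ld$ coming from the integrated (Weyl) asymptotics, exactly as in the paper's short chain of estimates. Your extra bookkeeping for small $\Ld$ and the uniformity in $x$ is consistent with what the paper leaves implicit.
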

\begin{proof}
We simply use Lemma \ref{LEM:loc eig} to bound
\begin{align*}
\sum_{\ld_n\in (\Ld,\Ld+1]}\frac{|\varphi_n(x)|^2}{\ld_n^\nu} &\sim \Ld^{-\nu}\Big[e\big(x,x,(\Ld+1)^2\big)-e(x,x,\Ld^2)\Big]\\
&\les \Ld^{-\nu}\Ld \les \Ld^{-\nu}\#\big\{k: \Ld<\ld_k \le \Ld+1\big\} \sim \sum_{\ld_n\in (\Ld,\Ld+1]}\ld_n^{-\nu}.
\end{align*}
\end{proof}

\subsection{Function spaces, linear and nonlinear estimates}\label{SUBSEC : analysis tools}
Next, recall that the functional calculus of $-\Dla$ is defined for any $\psi\in \S(\R;\C)$ by
\begin{align*}
\psi\big(-\Dla\big)f = \sum_{n\ge 0}\psi(\ld_n^2)\langle f,\varphi_n\rangle \varphi_n,
\end{align*}
for all $f\in C^{\infty}(\M;\C)$. This in particular allows us to define the more general class of Besov spaces associated with $\Dla$. First, using the functional calculus, we can define the Littlewood-Paley projectors $\Q_M$ for a dyadic integer $M\in 2^{\Z_{-1}}\deff \{\frac12,1,2,4,...\}$ as 
\begin{align*}
\Q_M = \begin{cases}
\psi_0\big(-M^{-2}\Dla\big)-\psi_0\big(-4M^{-2}\Dla\big),~~M\ge 1,\\
\psi_0\big(-4\Dla\big),~~M=\frac12,
\end{cases}
\end{align*}
where $\psi_0\in C^{\infty}_0(\R;\R)$ is non-negative and such that $\supp\psi_0\subset [-1,1]$ and $\psi_0\equiv 1$ on $[-\frac12,\frac12]$. 
\noi
We also define the regularization operators
\begin{align}\label{PN}
\P_N =\psi_0\big(-N^{-2}\Dla\big).
\end{align} 

With the inhomogeneous dyadic partition of unity $\{\Q_M\}_{M\in 2^{\Z_{-1}}}$, we can then define the Besov norms for $p,q \in [1,\infty]$ and $s\in \R$,
\begin{align}\label{Besov norm}
\| f \|_{\Bb^s_{p,q}(\M)} \deff \Big\|  \jb{M}^{qs} \| \Q_M  f \|_{L^p(\M)} \Big\|_{\ell^q_M} = \Big(\sum_{M\in 2^{\Z_{-1}}}\jb{M}^{qs}\big\|\Q_M f \big\|_{L^p(\M)}^q\Big)^{\frac1q}.
\end{align}
for any smooth function $f$. The last sum is of course understood to be the supremum for $q=\infty$.
\begin{definition}\label{def}\rm Fix $p,q \in [1,\infty]$ and $s \in \R$. We define the Besov space $\Bb^s_{p,q}(\M)$ to be the completion of $C^{\infty}(\M)$ for the norm $ \| \cdot \|_{\Bb^s_{p,q}(\M)} $. In particular, this defines the H\"older spaces $\Cc^s(\M) = \Bb^s_{\infty, \infty}(\M)$ for any $s\in \R$. 
\label{Besov}
\end{definition}

\smallskip

\begin{remark}\rm We make some observations.

\smallskip

\noi
(i) Note that since they only depend on the principal symbol of $\Dla$ (see e.g. \cite[Proposition 2.1]{BGT}), these spaces are independent of $A$.

\smallskip

\noi
(ii) In the literature, Besov spaces are often defined as the space of distributions having finite Besov norm (see e.g. \cite[Definition 2.68]{BCD} in the Euclidean case). Both definitions coincide for $p,q < \infty$, but there is a logarithmic difference when $p \text{ or } q  = \infty$, which is the case considered here due to the properties of the stochastic objects and the boundedness of the CGL propagator on these spaces (see Proposition~\ref{PROP:construction stochastic objects} and Lemma~\ref{LEM:semigroup} below). As it is more convenient to work with smooth functions in some places (see for example Lemma \ref{LEM:semigroup} \textup{(iii)}), we thus adopt Definition \ref{def} as our definition of Besov spaces. See also Remark 7 in \cite{MW2} for further discussion.
\end{remark}

In order to study various operators defined through the function calculus, such as the operators $\Q_M$, $\P_N$ defined above or the semigroup $S_{A,\alpha}(t)$ defined below, we first state a general bound on the kernel of such operators. We refer to \cite[Lemma 2.5]{ORTW} for the proof\footnote{In \cite{ORTW}, only real valued symbols are considered, but the proof extends to complex valued ones in a straightforward manner. Again, the proof is given for $-\Dlg$ but is the same for $-\Dla$ as it only depends on the principal symbol of the operator, which is the same in both cases.}.
\begin{lemma}\label{LEM:PM}
Let $\psi\in\S(\R;\C)$, and for any $h\in (0,1]$ define the kernel
\begin{align*}
\K_h(x,y)\deff \sum_{n\ge 0}\psi\big(h^{2}\ld_n^2\big)\varphi_n(x)\cj{\varphi_n(y)}.
\end{align*}  
Then for any $L>0$, there exists $C>0$ such that for any $h\in (0,1]$ and $x,y\in\M$ it holds
\begin{align}\label{K2}
\big|\K_h(x,y)\big|\le C h^{-2}\jb{h^{-1}\dg(x,y)}^{-L},
\end{align}
where $\dg$ is the geodesic distance on $\M$.
\end{lemma}
\noi
As in \cite{ORTW}, this implies the following boundedness property of such multipliers.
\begin{corollary}\label{COR : bernstein}
Let $\psi \in \S(\R;\C)$. For any $1\leq p\leq q\leq \infty$, there exists $C>0$ such that for any $u\in C^{\infty}(\M)$ and $h \in (0,1]$,
\begin{align*}
\big\|\psi(-h^2\Dla)u\big\|_{L^q(\M)}\le C h^{-2\big(\frac{1}{p}-\frac{1}{q}\big)}\|u\|_{L^p(\M)}.
\end{align*}
\end{corollary}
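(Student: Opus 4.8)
The plan is to realise $\psi(-h^2\Dla)$ as an integral operator, bound its kernel by Lemma~\ref{LEM:PM}, and conclude by a generalized Young (Schur-type) inequality, exploiting that $\M$ is two-dimensional.

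First I would, for $u\in C^\infty(\M)$, expand $u=\sum_n\jb{u,\varphi_n}\varphi_n$ and apply the functional calculus to write $\psi(-h^2\Dla)u=\sum_n\psi(h^2\ld_n^2)\jb{u,\varphi_n}\varphi_n$. Since $\psi\in\S(\R)$ decays rapidly and, by \eqref{loc eig} together with \eqref{asympt ldn}, one has $|\varphi_n(x)|^2\le e(x,x,\ld_n^2)\les 1+\ld_n^2$ with $\ld_n\sim n^{1/2}$, the series $\K_h(x,y)=\sum_n\psi(h^2\ld_n^2)\varphi_n(x)\cj{\varphi_n(y)}$ converges absolutely and uniformly, so that sum and integral may be interchanged to give
\[ \psi(-h^2\Dla)u(x)=\int_\M\K_h(x,y)\,u(y)\,dV(y). \]

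Next I would prove a mixed-norm bound on $\K_h$. Define $r\in[1,\infty]$ by $\tfrac1r=1-\tfrac1p+\tfrac1q$, which is $\ge1$ precisely because $p\le q$, and note $\tfrac1{r'}=\tfrac1p-\tfrac1q$. For $r<\infty$, choose $L$ with $Lr>2$ in \eqref{K2}; since on the compact surface $\M$ the geodesic balls satisfy $|B_\dg(x,\rho)|\les\rho^2$ for $0<\rho\le\mathrm{diam}(\M)$, a distribution-function computation yields
\[ \int_\M|\K_h(x,y)|^r\,dV(y)\les h^{-2r}\int_0^{\mathrm{diam}(\M)}\jb{h^{-1}\rho}^{-Lr}\rho\,d\rho\les h^{-2r}\cdot h^2=h^{-2(r-1)}, \]
whence $\sup_x\|\K_h(x,\cdot)\|_{L^r(\M)}\les h^{-2(1-1/r)}=h^{-2(1/p-1/q)}$; by the symmetry of the bound \eqref{K2} in $(x,y)$ the same estimate holds for $\sup_y\|\K_h(\cdot,y)\|_{L^r(\M)}$. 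For $r=\infty$, i.e. $p=1$, $q=\infty$, the bound $\sup_{x,y}|\K_h(x,y)|\les h^{-2}$ is \eqref{K2} itself. Finally, the generalized Young inequality for integral operators with these mixed-norm kernel bounds (equivalently: Minkowski's integral inequality gives $L^1\to L^r$ and $L^{r'}\to L^\infty$ boundedness with operator norm $\les h^{-2(1/p-1/q)}$, and one interpolates by Riesz--Thorin) yields $\|\psi(-h^2\Dla)u\|_{L^q(\M)}\les h^{-2(1/p-1/q)}\|u\|_{L^p(\M)}$, as claimed.

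The only step requiring genuine care is the kernel estimate: extracting the sharp power of $h$ uses the exact volume-growth exponent $2=\dim\M$ and the freedom to take $L$ as large as needed in \eqref{K2}. The endpoint cases are benign — for $p=q$ one has $r=1$, the kernel bound is $\les 1$, and one recovers uniform $L^p$-boundedness of $\psi(-h^2\Dla)$; the case $q=\infty$ is immediate from the pointwise kernel bound. Everything else is soft.
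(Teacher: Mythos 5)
Your argument is correct and is essentially the deduction the paper has in mind: the paper states the corollary as an immediate consequence of the kernel bound \eqref{K2} of Lemma~\ref{LEM:PM} (citing \cite{ORTW} for details), and your proof carries out exactly that — mixed-norm estimates on $\K_h$ using the two-dimensional volume growth, followed by the generalized Young/Schur inequality, with the endpoint $p=1$, $q=\infty$ read off directly from \eqref{K2}. No gaps; the justification of the kernel representation via Lemma~\ref{LEM:loc eig} and Weyl's law \eqref{asympt ldn} is a nice touch, though routine.
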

\noi

Next, recall that the semigroup $S_{A,\alpha}(t)$ for the linear complex (magnetic, time-dependent) Ginzburg-Landau equation has been defined in \eqref{def semi-group} above, and that $\alpha_1>0$. In particular, we have the following properties for $S_{A,\alpha}(t)$.
\begin{lemma}\label{LEM:semigroup}
 The following properties hold:\\
\textup{(i)} For any $1\le p_1\le p_2\le \infty$, any $1\le q\le\infty$ and $s_1,s_2\in\R$ with $s_1\le s_2$, there exists $C>0$ such that for any $0<t\le 1$ and any $f\in B^{s_2}_{p_1,q}(\M)$, we have the Schauder type estimate
\begin{align*}
\big\|S_{A,\alpha}(t) f\big\|_{\Bb^{s_2}_{p_2,q}}\le C t^{-\frac{(s_2-s_1)}{2} - (\frac{1}{p_1} - \frac{1}{p_2}) }\big\|f\big\|_{\Bb^{s_1}_{p_1,q}}.
\end{align*}
\textup{(ii)} Let $s_1,s_2 \in \R $ such that $s_1 \leq s_2 \le s_1 + 1 $. There exists $C>0$ such that for any $0<t\le 1$ and any $f\in \Cc^{s_1}(\M)$ , we have
\begin{align*}
\big\| \big( \operatorname{Id} - S_{A,\alpha}(t) \big)f \big\|_{C^{s_1}}\le C t^{ \frac{s_2 - s_1}{2} }\big\|f\big\|_{\Cc^{s_2}}.
\end{align*}
\textup{(iii)} Let $s \in \R$ and fix $f \in \Cc^{s}(\M)$. The map $t \in [0, \infty) \mapsto S_{A,\alpha}(t)f \in \Cc^{s}(\M)$ is continuous.
\end{lemma}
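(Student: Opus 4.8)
The strategy is to reduce all three statements to the kernel bound of Lemma~\ref{LEM:PM} and the resulting $L^{p}\to L^{q}$ estimate of Corollary~\ref{COR : bernstein}, applied to the function of $-\Dla$ obtained from $S_{A,\alpha}(t)$ after a dyadic rescaling in frequency. The key point is that for a dyadic $M\ge1$ the operator $\Q_M S_{A,\alpha}(t)$ is obtained by applying the functional calculus of $-\Dla$ to $\lambda\mapsto\big(\psi_0(M^{-2}\lambda)-\psi_0(4M^{-2}\lambda)\big)e^{-t(\alpha_1+i\alpha_2)\lambda}$, which equals $\Theta_{tM^{2}}\big(M^{-2}\lambda\big)$ with $\Theta_\tau(\mu)=\big(\psi_0(\mu)-\psi_0(4\mu)\big)e^{-\tau(\alpha_1+i\alpha_2)\mu}$. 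Since $\psi_0(\mu)-\psi_0(4\mu)$ is supported where $|\mu|\sim1$ and $\alpha_1>0$, every Schwartz seminorm of $\Theta_\tau$ is bounded uniformly in $\tau\ge0$ by $C_{k}\langle\tau\rangle^{k}e^{-c\tau}\lesssim_{k,L}\langle\tau\rangle^{-L}$ for any $L$. Thus $\Q_M S_{A,\alpha}(t)=\psi(-h^{2}\Dla)$ with $h=M^{-1}$ and $\psi=\Theta_{tM^{2}}$, and as the constants in Lemma~\ref{LEM:PM} and Corollary~\ref{COR : bernstein} depend only on finitely many seminorms of $\psi$, one gets $\|\Q_M S_{A,\alpha}(t)\|_{L^{p_1}\to L^{p_2}}\lesssim_{L}M^{2(\frac1{p_1}-\frac1{p_2})}\langle tM^{2}\rangle^{-L}$ for all $M\ge1$ (the single piece $M=\frac12$, corresponding to $h=1$, being estimated directly, using $0<t\le1$).

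For (i), I would then write $\Q_M S_{A,\alpha}(t)f=\Q_M S_{A,\alpha}(t)\widetilde{\Q}_Mf$ for a fattened Littlewood--Paley projector $\widetilde{\Q}_M$ that is the identity on the frequency support of $\Q_M$ and overlaps boundedly many dyadic pieces; multiplying by $\langle M\rangle^{s_2}$ and taking the $\ell^{q}_{M}$ norm reduces the Schauder bound to the elementary inequality $\sup_{M}\langle M\rangle^{\,s_2-s_1+2(\frac1{p_1}-\frac1{p_2})}\langle tM^{2}\rangle^{-L}\lesssim t^{-\frac{s_2-s_1}{2}-(\frac1{p_1}-\frac1{p_2})}$ for $0<t\le1$, which follows by choosing $L$ large and splitting the supremum according to whether $M^{2}\le t^{-1}$ or $M^{2}\ge t^{-1}$. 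For (ii), the same rescaling shows $\Q_M\big(\Id-S_{A,\alpha}(t)\big)$ has rescaled symbol $\big(\psi_0(\mu)-\psi_0(4\mu)\big)\big(1-e^{-tM^{2}(\alpha_1+i\alpha_2)\mu}\big)$, all of whose relevant seminorms are $\lesssim_{k}\min(1,tM^{2})$ (again using that $\mu$ is bounded away from $0$ on the support and $\alpha_1>0$); Lemma~\ref{LEM:PM} and a Schur estimate give $\|\Q_M(\Id-S_{A,\alpha}(t))f\|_{L^\infty}\lesssim\min(1,tM^{2})\|\widetilde{\Q}_Mf\|_{L^\infty}$, and since $0\le\frac{s_2-s_1}{2}\le1$ one bounds $\min(1,tM^{2})\le(tM^{2})^{(s_2-s_1)/2}$; multiplying by $\langle M\rangle^{s_1}$ (and using $M\sim\langle M\rangle$) and taking the supremum over $M$ yields the bound $t^{(s_2-s_1)/2}\|f\|_{\Cc^{s_2}}$.

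For (iii), when $t_0>0$ the semigroup property gives $S_{A,\alpha}(t)f-S_{A,\alpha}(t_0)f=\big(S_{A,\alpha}(|t-t_0|)-\Id\big)S_{A,\alpha}\big(\min(t,t_0)\big)f$; applying (i) with $(s_1,s_2)=(s,s+1)$ to control $\|S_{A,\alpha}(\min(t,t_0))f\|_{\Cc^{s+1}}$ (combined with the semigroup property if $\min(t,t_0)>1$) and then (ii) with $(s_1,s_2)=(s,s+1)$ produces a bound of order $|t-t_0|^{1/2}\to0$. The only genuinely delicate point is continuity at $t_0=0$, that is $S_{A,\alpha}(t)f\to f$ in $\Cc^{s}(\M)$ as $t\to0^{+}$: for $f\in C^{\infty}(\M)\subset\Cc^{s+1}(\M)$ this is immediate from (ii), and for general $f\in\Cc^{s}(\M)$, which by Definition~\ref{def} is the closure of $C^{\infty}(\M)$, one concludes by a standard $\eps/3$ argument using the uniform bound $\sup_{0\le t\le1}\|S_{A,\alpha}(t)\|_{\Cc^{s}\to\Cc^{s}}<\infty$ coming from (i). I expect (i) to be the main step: the one subtlety is to keep track of the fact that the constants in Lemma~\ref{LEM:PM} and Corollary~\ref{COR : bernstein} depend only on finitely many Schwartz seminorms of the symbol, so that the gain in $t$ can be harvested from the exponential factor $e^{-ctM^{2}}$; once this is in place, (ii) and (iii) are essentially formal consequences.
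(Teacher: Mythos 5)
Your argument is correct, but it proceeds along a genuinely different route from the paper for parts (i) and (ii). The paper cites \cite{ORTW} for (i) and proves (ii) by a semiclassical parametrix computation: it reduces to the bound \eqref{QS}, writes $S_{A,\alpha}(t)=\psi(-t\Dla)$ and $\Q_M$ as local pseudodifferential expansions $\sum_k t^{k/2}p_k(x,\sqrt t D)$, $\sum_\l M^{-\l}q_\l(x,M^{-1}D)$, and then estimates compositions, commutator-type terms and several remainders one by one. You instead observe that $\Q_M S_{A,\alpha}(t)$ (resp.\ $\Q_M(\Id-S_{A,\alpha}(t))$) is a single spectral multiplier $\Theta_{tM^2}(-M^{-2}\Dla)$ whose rescaled symbol has all seminorms $\lesssim_L\jb{tM^2}^{-L}$ (resp.\ $\lesssim\min(1,tM^2)$) thanks to $\alpha_1>0$ and the dyadic localization, and you then harvest the smoothing directly from Lemma~\ref{LEM:PM}/Corollary~\ref{COR : bernstein} plus a Littlewood--Paley summation. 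This is shorter and treats (i) and (ii) on the same footing; the price is that it leans on a quantitative refinement of Lemma~\ref{LEM:PM} that the paper never states, namely that the constant depends only on finitely many Schwartz seminorms of $\psi$, uniformly in $h$. You correctly flag this as the load-bearing point; it does hold for the proof in \cite{ORTW}, but it must be extracted from that proof rather than quoted, so your argument is not purely a black-box use of the stated lemmas, whereas the paper's parametrix route only uses the symbol expansions it already sets up. Part (iii) is essentially the paper's own argument (smooth approximation plus (i) and (ii)).

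Two small repairs are needed as written. First, $\Theta_\tau(\mu)=(\psi_0(\mu)-\psi_0(4\mu))e^{-\tau(\alpha_1+i\alpha_2)\mu}$ is \emph{not} uniformly Schwartz: on the part of the support with $\mu<0$ the exponential grows like $e^{\tau\alpha_1|\mu|}$. Since the spectrum of $-\Dla$ lies in $[0,\infty)$ this is harmless --- modify the symbol by a smooth cutoff vanishing on $(-\infty,0]$ and equal to $1$ on $[1/16,\infty)$, exactly as the paper does when it writes $S_{A,\alpha}(t)=\psi(-h^2\Dla)$ with $\psi(x)=e^{-(\alpha_1+i\alpha_2)x}$ only for $x\ge0$ --- but the uniform seminorm bound should be claimed for this modified family, not for $\Theta_\tau$ on all of $\R$. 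Second, in (ii) the low-frequency block $M=\tfrac12$ should be noted separately (there the bound $|1-e^{-t(\alpha_1+i\alpha_2)\lambda}|\lesssim t\le t^{(s_2-s_1)/2}$ on the support of $\psi_0(4\cdot)$ does the job, using $s_2\le s_1+1$ and $t\le1$), just as you did for (i). With these adjustments the proposal is complete.
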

\begin{proof} The property (i) follows from the same argument as in the proof of Lemma 2.6 (ii) in \cite{ORTW}, since $\Dla$ and $\Dlg$ share the same principal symbol. 

To prove (ii), note that it suffices to show that for any smooth $f$,
\begin{align}\label{QS}
\big\|\Q_Mf-S_{A,\alpha}(t)\Q_Mf\big\|_{L^{\infty}(\M)}\les \big(\sqrt{t}\jb{M}\big)^{s_2-s_1}\big\|f\big\|_{L^{\infty}}
\end{align}
uniformly in $M\in 2^{\Z_{-1}}$. To prove this bound, we then follow the argument of the proof of \cite[Lemma 2.6 (ii)]{ORTW}: we write $S_{A,\alpha}(t)=\psi\big(-h^2\Dla\big)$ for some $\psi\in\S(\R;\C)$ satisfying $\psi(x)=e^{-(\alpha_1+i\alpha_2)x}$ for $x\ge 0$, and semi-classical parameter $h=\sqrt{t}$. In particular in view of Corollary~\ref{COR : bernstein} (applied to both $\Q_M$ and $S_{A,\alpha}(t)$) we only need to consider the case $\sqrt{t}\ll \jb{M}^{-1}$. Then we decompose locally $\Q_M$ and $S_{A,\alpha}(t)\Q_M$ as (see \cite{BGT,ORT,ORTW} for more details)
\begin{align*}
\kk^\star\big(\chi_0\Q_M\big) &=(\kk^\star\chi_0)\kk^\star\big(\chi_1\Q_M\big)=(\kk^\star\chi_0)\Big[\sum_{\ell=0}^{L-1}M^{-\ell}q_\ell(x,M^{-1}D)\kk^\star\chi_2+R_{-L,M}\Big]
\end{align*}
and
\begin{align*}
\kk^\star\big(\chi_0 S_{A,\alpha}(t)\Q_M\big)&=\sum_{k=0}^{K-1}t^{\frac{k}2}p_k(x,\sqrt{t}D)\Big[\sum_{\ell=0}^{L-1}M^{-\ell}q_\ell(x,M^{-1}D)\kk^\star\chi_2+R_{-L,M}\Big]+R_{-K,t}\Q_M,
\end{align*}
where $\chi_j$ are some smooth functions compactly supported within a chart $(U,V,\kk)$ on $\M$ with $\chi_{j+1}\equiv 1$ on $\supp\chi_j$, $q_\ell,p_k\in \S^{-\infty}(U\times\R^2)$ are some smooth symbols compactly supported in $x$, and also in $\eta$ for $q_\ell$, and the remainders satisfy
\begin{align}\label{remainder}
\big\|R_{-K,t}\big\|_{B^{-\s_1}_{2,2}(\M)\to B^{\s_2}_{2,2}(\R^2)} \les t^{\frac{K-\s_1-\s_2}2}
\end{align}
and
\begin{align*}
\big\|R_{-L,M}\big\|_{B^{-\s_1}_{2,2}(\M)\to B^{\s_2}_{2,2}(\R^2)} \les M^{\s_1+\s_2-L}
\end{align*}
for any $\s_1,\s_2\ge 0$ with $\s_1+\s_2\le \min\{K,L\}$. In particular, taking $\s_1,\s_2$ large enough so that $B^{\s_2}_{2,2}(\R^2)\subset L^{\infty}(\R^2)$ and $L^1(\M)\subset B^{-\s_1}_{2,2}(\M)$ (which is possible in view of Lemma~\ref{LEM:Besov}~(iv)) we can bound
\begin{align*}
\big\|R_{-K,t}\Q_M\big\|_{L^{\infty}(\M)\to L^{\infty}(\R^2)} &\les \big\|R_{-K,t}\Q_M\big\|_{B^{-\s_1}_{2,2}(\M)\to B^{\s_2}_{2,2}(\R^2)}\les t^{\frac{K-\s_1-\s_2}2},
\end{align*}
which is enough for \eqref{QS} provided that we take $K$ large enough. Similarly, we have
\begin{align*}
\big\|t^{\frac{k}2}p_k(x,\sqrt{t}D)R_{-L,M}\big\|_{L^{\infty}(\M)\to L^{\infty}(\R^2)} &\les t^{\frac{k}2}\big\|p_k(x,\sqrt{t}D)R_{-L,M}\big\|_{B^{-\s_1}_{2,2}(\M)(\M)\to B^{\s_2}_{2,2}(\R^2)}.
\end{align*}
Now, for $f\in B^{\s_2+10}_{2,2}(\R^2)$, using that $p_k\in \S^{-\infty}(U\times\R^2)$, integrations by parts and Cauchy-Schwarz inequality, we have
\begin{align*}
\big\|p_k(x,\sqrt{t}D)f\big\|_{B^{\s_2}_{2,2}(\R^2)}&\sim \Big\|\jb{\zeta}^{\s_2}\int_{\R^2}\int_{\R^2}e^{ix\cdot(\eta-\zeta)}p_k(x,\sqrt{t}\eta)\ft f(\eta)d\eta dx\Big\|_{L^2_\zeta}\\
&\sim \Big\|\jb{\zeta}^{\s_2}\int_{\R^2}\int_{\R^2}e^{ix\cdot(\eta-\zeta)}\jb{\eta-\zeta}^{-10}\jb{D_x}^{10}p_k(x,\sqrt{t}\eta)\ft f(\eta)d\eta dx\Big\|_{L^2_\zeta}\\
&\les \big\|\jb{\zeta}^{\s_2}\jb{\eta-\zeta}^{-10}\jb{\eta}^{-\s_2-10}\big\|_{L^2_{\zeta,\eta}}\big\|\jb{D_x}^{10}p_k(x,\sqrt{t}\eta)\big\|_{L^{\infty}_{x,\eta}}\|f\|_{B^{\s_2+10}_{2,2}(\R^2)}\\
&\les \|f\|_{B^{\s_2+10}_{2,2}(\R^2)}
\end{align*}
uniformly in $t$. Thus
\begin{align*}
\big\|t^{\frac{k}2}p_k(x,\sqrt{t}D)R_{-L,M}\big\|_{L^{\infty}(\M)\to L^{\infty}(\R^2)} &\les t^{\frac{k}2}\big\|R_{-L,M}\big\|_{B^{-\s_1}_{2,2}(\M)(\M)\to B^{\s_2+10}_{2,2}(\R^2)}\\
&\les t^{\frac{k}2}M^{\s_1+\s_2+10-L},
\end{align*}
which is again enough for \eqref{QS} if $L$ is large enough and $k\ge 1$, since we restricted ourselves to $s_2\le s_1 +1$. As for $k=0$, note that the principal symbol of $\kk^\star(\chi_0 S_{A,\alpha}(t))=\kk^\star\big(\chi_0 \psi(-t\Dla)\big)$ is 
\begin{align*}
(\kk^\star\chi_0)(x)\psi(t|\eta|_x^2)=(\kk^\star\chi_0)(x)e^{-t(\alpha_1+i\alpha_2)|\eta|^2_x}
\end{align*}
with $\alpha_1>0$, where we use the notation 
\begin{align*}
|\eta|^2_x =\gm^{j,k}(x)\eta_j\eta_k \sim |\eta|^2
\end{align*}
uniformly in $x\in\M$.
So we have for any smooth $f\in C^{\infty}(\M)$
\begin{align*}
&\Big\|\big[(\kk^\star \chi_0)\Id - p_0(x,\sqrt{t}D)\big]R_{-L,M}f\Big\|_{L^{\infty}(\R^2)}\\&=\Big\|(\kk^\star\chi_0)(x)\int_{\R^2}e^{ix\cdot\eta}\big[1-e^{-t(\alpha_1+i\alpha_2)|\eta|^2_x}\big]\widehat{(R_{-L,M}f)}(\eta)d\eta\Big\|_{L^{\infty}(\R^2)}\\
&\les t\int_{\R^2}\jb{\eta}^2\big|\widehat{(R_{-L,M}f)}(\eta)\big|d\eta \les t \big\|R_{-L,M}f\big\|_{B^4_{2,2}(\R^2)}\\
& \les t M^{s_1+4-L}\big\|f\big\|_{B^{-s_1}_{2,2}(\M)} \les t M^{s_1+4-L}\|f\|_{L^{\infty}(\M)},
\end{align*}
where we used the mean value and Cauchy-Schwarz inequalities. This is again enough for \eqref{QS}.

Thus it remains to treat the contribution of
\begin{align*}
\Big[(\kk^\star\chi_0)-\sum_{k=0}^{K-1}t^{\frac{k}2}p_k(x,\sqrt{t}D)\Big]\sum_{\ell=0}^{L-1}M^{-\ell}q_\ell(x,M^{-1}D)\kk^\star\chi_2.
\end{align*}
Again, similarly to \cite[Lemma 2.6 (ii)]{ORTW}, we can express the composition of pseudo-differential operators
$p_k(x,\sqrt{t}D)q_\ell(x,M^{-1}D)$ as
\begin{align*}
p_k(x,\sqrt{t}D)q_\ell(x,M^{-1}D) = \sum_{|\nu|=0}^{B-1}\frac{t^{\frac{|\nu|}2}}{\nu!}\big\{\partial_\eta^\nu p_k(x,\sqrt{t}\cdot)\partial_x^\nu q_\ell(x,M^{-1}\cdot)\big\}(D)+R_{k,\ell,B,t,M}(x,D)
\end{align*}
where the symbol of the remainder is given by
\begin{align*}
R_{k,\ell,B,t,M}(x,\eta)&=\frac1{(2\pi)^2}\sum_{|\nu|=B}\frac{Bt^{\frac{B}2}}{\nu!}\int_{\R^2}\int_{\R^2}e^{-iz\cdot\zeta}\partial_\zeta^\nu p_k\big(x,\sqrt{t}(\eta+\zeta)\big)\\
&\qquad\times\int_0^1(1-\theta)^B\partial_x^\nu q_\ell(x+\theta z,M^{-1}\eta)d\theta dzd\zeta.\notag
\end{align*}
In particular its kernel $\int_{\R^2}e^{i(x-y)\cdot\eta}R_{k,\ell,B,t,M}(x,\eta)d\eta$ can be estimated using integrations by part in $\zeta$ along with the property $p_k,q_\ell\in\S^{-\infty}(U\times\R^2)$:
\begin{align*}
&\Big|\int_{\R^2}e^{i(x-y)\cdot\eta}\int_{\R^2}\int_{\R^2}e^{-iz\cdot\zeta}\partial_\zeta^\nu p_k\big(x,\sqrt{t}(\eta+\zeta)\big)\int_0^1(1-\theta)^B\partial_x^\nu q_\ell(x+\theta z,M^{-1}\eta)d\theta dzd\zeta d\eta\Big|\notag\\
&\les\int_{\R^2}\int_{\R^2}\int_{\R^2}\jb{z}^{-3}\Big|\jb{D_\zeta}^3\partial_\zeta^\nu p_k\big(x,\sqrt{t}(\eta+\zeta)\big)\int_0^1(1-\theta)^B\partial_x^\nu q_\ell(x+\theta z,M^{-1}\eta)d\theta\Big| dzd\zeta d\eta\Big|\\
&\les \int_{\R^2}\int_{\R^2}\jb{\sqrt{t}(\eta+\zeta)}^{-10}\mathbf{1}_{|\eta|\les M}d\zeta d\eta \les t^{\frac{B-2}2}M^2,\notag
\end{align*}
for any $|\nu|=B$, uniformly in $x$ and $y$. This leads to
\begin{align*}
\big\|R_{k,\ell,B,t,M}(x,D)\kk^\star\chi_2\big\|_{L^{\infty}(\M)\to L^{\infty}(\R^2)}&\les t^{\frac{B}2}\sup_{|\nu|=B}\Big\|\int_{\R^2}e^{i(x-y)\cdot\eta}R_{k,\ell,B,t,M}(x,\eta)d\eta\Big\|_{L^\infty_{x,y}}\\&\les t^{\frac{B-2}2}M^2,\notag
\end{align*}
where we used Schur's test in the first step, along with the compactness of the support in $x$ and $y$. This last bound is also enough for \eqref{QS} by taking $B$ large, since we are in the regime $\sqrt{t}\ll M^{-1}$.

As for the products $\partial_\eta^\nu p_k\partial_x^\nu q_\ell$, in the case $k+|\nu|\ge 1$, proceeding similarly and using integrations by part we have
\begin{align*}
&\Big\|t^{\frac{k+|\nu|}2}\big\{\partial_\eta^\nu p_k(x,\sqrt{t}\cdot)\partial_x^\nu q_\ell(x,M^{-1}\cdot)\big\}(D)(\kk^\star\chi_2)\Big\|_{L^{\infty}(\R^2)\to L^{\infty}(\R^2)}\\ &\les t^{\frac{k+|\nu|}2}\Big\|(\kk^\star\chi_2)(y)\int_{\R^2}e^{i(x-y)\cdot\eta}\partial_\eta^\nu p_k(x,\sqrt{t}\eta)\partial_x^\nu q_\ell(x,M^{-1}\eta)d\eta\Big\|_{L^\infty_x L^1_y}\\
&\les  t^{\frac{k+|\nu|}2}\Big\|\frac{(\kk^\star\chi_2)(y)}{|x-y|}\int_{\R^2}e^{i(x-y)\cdot\eta}\big\{\sqrt{t}\partial_\eta^\nu\nabla_\eta p_k(x,\sqrt{t}\eta)\partial_x^\nu q_\ell(x,M^{-1}\eta)\\
&\qquad\qquad+M^{-1}\partial_\eta^\nu p_k(x,\sqrt{t}\eta)\partial_x^\nu \nabla_\eta q_\ell(x,M^{-1}\eta)\big\}d\eta\Big\|_{L^\infty_x L^1_y}\\
&\les t^{\frac{k+|\nu|}2}\sup_x\int_{\R^2}\int_{\R^2}\frac{(\kk^\star\chi_2)(y)}{M|x-y|}\jb{\sqrt{t}\eta}^{-10}\mathbf{1}_{|\eta|\les M}d\eta dy\les t^{\frac{k+|\nu|}2}M,
\end{align*}
where we used that $\sqrt{t}\ll M^{-1}$. This is still enough for \eqref{QS} due to the restriction $s_2 - s_1 \le 1 \le k+|\nu|$.

At last, using that the symbol of the leading term corresponding to $k=|\nu|=0$ is given by 
\begin{align*}
(\kk^\star\chi_0)(x)\big[1-e^{-t(\alpha_1+i\alpha_2)|\eta|^2_x}\big]q_\ell(x,M^{-1}\eta)
\end{align*}
with $q_\ell$ compactly supported in $\eta$, a straightforward adaptation of the argument in \cite[Lemma 4]{MW2} yields
\begin{align*}
\Big\|(\kk^\star\chi_0)q_\ell(x,M^{-1}D)-\big\{p_0(x,\sqrt{t}\cdot)q_\ell(x,M^{-1}\cdot)\big\}(D)\Big\|_{L^{\infty}(\R^2)\to L^{\infty}(\R^2)}\les tM^2,
\end{align*}
which finishes the proof of \eqref{QS}.

Finally, let us prove $\textup{(iii)}$. For $t\in [0,\infty)$, let $\eps>0$. In view of Definition~\ref{Besov} for the space $\Cc^s(\M)$, we can pick $g\in C^{\infty}(\M)$ such that $\big\|f-g\big\|_{\Cc^s}\ll \eps$. Then, for small $h \in \R$ (or $h>0$ if $t=0$), using \textup{(i)} and \textup{(ii)} we estimate for some $\dl\in (0,1)$
\begin{align*}
\big\| \big( S_{A,\alpha}(t+h) - S_{A,\alpha}(t) \big)f \big\|_{\Cc^s} & \les \big\|\big( S_{A,\alpha}(t+h) - S_{A,\alpha}(t) \big)(f-g)  \big\|_{\Cc^s} +  \big\| S_{A,\alpha}(t)   \big( S(h) - \Id \big)g \big\|_{\Cc^s} \\
& \les  \big\| f- g \big\|_{\Cc^s} +  \big\| \big( S(h) - \Id \big)g \big\|_{\Cc^s} \\
&  \les \big\| f- g \big\|_{\Cc^s} +  h^{\frac{\dl}2} \| g \|_{\Cc^{s+ \dl}} < \eps
\end{align*}
provided that $h$ is small enough depending on $\eps$. This proves the property (iii).
\end{proof}

We also have the following linear and bilinear estimates in Besov spaces.
\begin{lemma} \label{LEM:Besov} We have the following inequalities: \\
\textup{(i)} Suppose $s_1 <0 $ and $s_2 >0$ satisfy $s_1+s_2 >0$. Then, the map $(f,g) \mapsto fg $ can be extended to a continuous linear mapping from $\Cc^{s_1}(\M) \times \Cc^{s_2}(\M) \to \Cc^{s_1}(\M)$ and
\begin{align*}
\| fg \|_{\Cc^{s_1}} \les \| f \|_{\Cc^{s_1}} \| g \|_{\Cc^{s_2}}.
\end{align*}
\textup{(ii)} For any $s >0$ and $f,g \in \Cc^{s}(\M)$ we have:
\begin{align*}
\| fg \|_{\Cc^{s}(\M)} \les \| f \|_{L^{\infty}} \| g \|_{\Cc^{s}} +  \| g \|_{L^{\infty}} \| f \|_{\Cc^{s}}.
\end{align*}
\textup{(iii)} Let $s \in\R$, $1\leq p_1\leq p_2\leq \infty$ and $q\in [1,\infty]$. Then for any $f\in \Bb^s_{p_1,q}(\M)$ we have
\begin{align*}
\|f\|_{\Bb^{s-2\big(\frac1{p_1}-\frac1{p_2}\big)}_{p_2,q}}\les \|f\|_{\Bb^{s}_{p_1,q}}.
\end{align*}
\textup{(iv)} Let $s>0$ and $p_1,p_2,q_1,q_2\in [1,\infty]$ with
\begin{align*}
\frac1p = \frac1{p_1}+\frac1{p_2} \qquad\text{ and }\qquad \frac1q=\frac1{q_1}+\frac1{q_2}.
\end{align*}
 Then for any $f\in B^{s}_{p_1,q_1}(\M)$ and $g\in B^{s}_{p_2,q_2}(\M)$, we have $fg \in B^{s}_{p,q}(\M)$, and moreover it holds
\begin{align*}
\|fg\|_{B^{s}_{p,q}(\M)}\les \|f\|_{B^{s}_{p_1,q_1}(\M)}\|g\|_{B^{0}_{p_2,q_2}(\M)}+\|f\|_{B^{0}_{p_1,q_1}(\M)}\|g\|_{B^{s}_{p_2,q_2}(\M)}.
\end{align*}
\textup{(v)} Let $\theta\in (0;1)$. There exists $C>0$ such that for any smooth $f$ it holds
\begin{align*}
\|f\|_{B^\theta_{1,1}}\le \|f\|_{L^1}^{1-\theta}\|\nabla f\|_{L^1}^\theta+\|f\|_{L^1}.
\end{align*}
\end{lemma}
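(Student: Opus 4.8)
The plan is to deduce all five estimates from Littlewood--Paley theory adapted to the functional calculus of $\Dla$, taking as the only manifold-specific inputs the kernel bound of Lemma~\ref{LEM:PM} and the Bernstein-type inequality of Corollary~\ref{COR : bernstein}; everything else is the classical paraproduct bookkeeping, carried out as in \cite{BGT,ORT,ORTW}.

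I would dispatch \textup{(iii)} and \textup{(v)} first. For \textup{(iii)}, apply Corollary~\ref{COR : bernstein} with $h\sim\langle M\rangle^{-1}$ (recall $\dim\M=2$) to each dyadic block, $\|\Q_M f\|_{L^{p_2}}\les\langle M\rangle^{2(\frac1{p_1}-\frac1{p_2})}\|\Q_M f\|_{L^{p_1}}$, then multiply by $\langle M\rangle^{s-2(\frac1{p_1}-\frac1{p_2})}$ and take the $\ell^q_M$-norm. For \textup{(v)}, I would combine two uniform-in-$M$ bounds coming from Lemma~\ref{LEM:PM} via Schur's test: $\|\Q_M f\|_{L^1}\les\|f\|_{L^1}$, and, for $M\ge1$, $\langle M\rangle\|\Q_M f\|_{L^1}\les\|\nabla f\|_{L^1}+\|f\|_{L^1}$ (obtained by writing $\Q_M$ as $\langle M\rangle^{-1}$ times an operator with integrable kernel applied to $D_A=\nabla-iA$, up to bounded lower-order terms). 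Interpolating between these and summing $\langle M\rangle^\theta\|\Q_M f\|_{L^1}$ over dyadic $M$, with the split at $M_0\sim\|\nabla f\|_{L^1}/\|f\|_{L^1}$, produces $\|f\|_{L^1}^{1-\theta}\|\nabla f\|_{L^1}^\theta+\|f\|_{L^1}$ since $\theta\in(0,1)$; the $M=\tfrac12$ block yields the additive $\|f\|_{L^1}$.

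For \textup{(i)}, \textup{(ii)} and \textup{(iv)} I would use the Bony decomposition: for smooth $f,g$,
\begin{align*}
fg=\sum_{M,M'}\Q_M f\,\Q_{M'}g=\Pi_{<}(f,g)+\Pi_{\sim}(f,g)+\Pi_{>}(f,g),
\end{align*}
collecting the blocks with $M\ll M'$, $M\sim M'$, $M\gg M'$ respectively. The one manifold-specific point is a substitute for Fourier-support localization of products: the composition $\Q_K(\Q_M f\cdot\Q_{M'}g)$ has a kernel that decays rapidly off the diagonal and gains the expected powers of $\min(M,M')$, so that $\Q_K\Pi_{<}$ (resp.\ $\Q_K\Pi_{>}$) only sees $M'\sim K$ (resp.\ $M\sim K$) up to harmless tails, and $\Q_K\Pi_{\sim}$ only sees $M\ges K$; this is precisely the type of estimate underlying Lemma~\ref{LEM:PM} and Corollary~\ref{COR : bernstein}, and I would quote \cite{ORTW} for it. Granting this, each estimate reduces to: Hölder in $x$ on each bilinear block, $\|\Q_K(S_{\ll M'}f\cdot\Q_{M'}g)\|_{L^r}\les\|S_{\ll M'}f\|_{L^{r_1}}\|\Q_{M'}g\|_{L^{r_2}}$ with the appropriate exponents (here $S_{\ll M'}$ is the partial sum $\sum_{N\ll M'}\Q_N$); bounding the low-frequency factor by Schur's test, $\|S_{\ll M'}f\|_{L^\infty}\les\|f\|_{L^\infty}$ for \textup{(ii)}--\textup{(iv)}, or, in the negative-regularity regime of \textup{(i)}, by $\|S_{\ll M'}f\|_{L^\infty}\les\sum_{N\ll M'}\|\Q_N f\|_{L^\infty}\les\langle M'\rangle^{-s_1}\|f\|_{\Cc^{s_1}}$ using $s_1<0$; and reassembling the $\ell^q_K$-norm using convergence of the geometric series in play --- $s_2>0$ for $\Pi_{<}$ and $\Pi_{>}$ in \textup{(i)}, $s_1+s_2>0$ for $\Pi_{\sim}$ in \textup{(i)}, and $s>0$ for the resonant term in \textup{(ii)} and \textup{(iv)}. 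The continuity/extension statement in \textup{(i)} then follows from bilinearity of the resulting bound together with the density of $C^\infty(\M)$ in $\Cc^{s_i}(\M)$.

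The main obstacle is the passage from exact Fourier-support bookkeeping --- available on $\R^d$ through $\widehat{fg}=\widehat f*\widehat g$ --- to the ``rapid off-diagonal kernel decay plus frequency gain'' statement above: a product of two pieces spectrally localized for $\Dla$ is not itself spectrally localized, so the error has to be quantified, which is exactly what the semiclassical parametrix/kernel estimate behind Lemma~\ref{LEM:PM} provides. A secondary point requiring care is the summability bookkeeping in the $\ell^q$-norms in \textup{(iv)}, where one must use together the gain $s>0$ on the high-frequency factor and Hölder's inequality in the dyadic index with $\frac1q=\frac1{q_1}+\frac1{q_2}$. Once the manifold paraproduct facts are quoted, \textup{(i)}--\textup{(v)} are the usual computations.
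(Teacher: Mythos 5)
Your outline is correct, but it takes a genuinely different route from the paper. The paper's proof is a two-line reduction: by \cite[Proposition 2.5]{ORT} the spectrally defined Besov norms on $(\M,\gm)$ are equivalent to (localized) Euclidean ones, so all five statements are transferred wholesale to $\R^2$ and quoted from \cite[Theorems 2.82, 2.85, Corollary 2.86]{BCD} and \cite[Proposition 8]{MW2}. You instead rebuild the paraproduct calculus intrinsically from the functional calculus of $\Dla$, using Lemma~\ref{LEM:PM} and Corollary~\ref{COR : bernstein}; your dyadic bookkeeping for (i)--(iv) (including the exponents $s_2>0$, $s_1+s_2>0$, $s>0$, and the H\"older step in the dyadic index for (iv)) and the split at $M_0\sim \|\nabla f\|_{L^1}/\|f\|_{L^1}$ for (v) are the right classical computations. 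What each approach buys: the paper's transference confines the whole manifold-specific difficulty to one quoted equivalence of norms, at the price of being a pure citation; your route is more self-contained in spirit, but the two ingredients you lean on are \emph{not} literally contained in the results you invoke. The almost-orthogonality statement that $\Q_K(\Q_Mf\,\Q_{M'}g)$ is negligible off the expected frequency relations bounds a composition of a spectral cut-off with a product, which Lemma~\ref{LEM:PM} and Corollary~\ref{COR : bernstein} (estimates on a single kernel) do not give by themselves; it requires the semiclassical parametrix together with Bernstein-type bounds on derivatives of localized pieces, i.e.\ kernel estimates with derivatives, in the style of \cite{BGT,ORTW}. Likewise, your bound $\jb{M}\|\Q_M f\|_{L^1}\les \|\nabla f\|_{L^1}+\|f\|_{L^1}$ in (v) uses a gradient (in $y$) bound on the kernel $\K_h$ of $\psi(-h^2\Dla)$ with the extra factor $h^{-1}$, again an extension of Lemma~\ref{LEM:PM} rather than the lemma itself (the paper proves such derivative kernel bounds only ad hoc, in Lemma~\ref{LEM:GreenLog}). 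So your plan works, but those two facts should be either proved from the parametrix or cited precisely; alternatively, the single transference result \cite[Proposition 2.5]{ORT} used by the paper short-circuits both.
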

\begin{proof}
By \cite[Proposition 2.5]{ORT}, the proof of these statements on a closed surface $(\M,\gm)$ reduces to the corresponding statements in the Euclidean case $\M = \R^2$, which can be found e.g. in \cite[Theorems 2.82, 2.85 and Corollary 2.86]{BCD} or \cite[Proposition 8]{MW2}.
\end{proof}
\subsection{Regularization of the Green's function}
We now investigate the properties of the fundamental solution for the magnetic Laplace equation, defined as\footnote{Recall that $\lambda_0=0$ if and only if $A$ is exact. For the local and global well-posedness in Theorems~\ref{THM : LWP theory 1} and~\ref{THM : deterministic GWP}, we only need to know the singularity of the Green's function along the diagonal, which does not depend on whether $\lambda_0=0$ or not. This explains why we allow $\lambda_0=0$ in the definition \eqref{Green} of the Green's function.}
\begin{align}\label{Green}
G_A(x,y) \deff \sum_{\ld_n\ge \ld_+}\frac{\varphi_n(x)\cj{\varphi_n(y)}}{\ld_n^2}, \quad (x,y) \in \M\times\M,
\end{align}
where $\lambda_+$ has been defined in \eqref{lambda+}, and the convergence of the series holds in the sense of distributions.

Following \cite[Subsection 2.2]{ORTW}, the first step is to show that $G_A$ has at most a logarithmic singularity near the diagonal.

\begin{lemma}\label{LEM:GreenLog}
\textup{(i)} There exists a continuous function $\widetilde{G_A}$ on $\M\times\M$ such that for any $x,y\in\M$ with $x\neq y$, it holds
\begin{align}\label{GAlog}
G_A(x,y)=-\frac1{2\pi}\log(\dg(x,y)) +\widetilde{G_A}(x,y).
\end{align}
\textup{(ii)} There exists a constant $C>0$ such that for any $x,y\in\M$ with $x\neq y$, it holds
\begin{align}\label{GAnabla}
\big|\nabla_x G_A(x,y)\big|\le C\dg(x,y)^{-1}.
\end{align}
\end{lemma}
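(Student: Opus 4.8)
The plan, following \cite[Subsection~2.2]{ORTW}, is to represent $G_A$ via the magnetic heat semigroup and read off its short-time singularity, the large-time tail being smooth because of the spectral gap $\ld_n\geq\lambda_+>0$ in \eqref{Green}. Concretely, using $\ld_n^{-2}=\int_0^\infty e^{-t\ld_n^2}\,dt$ for the indices in \eqref{Green}, I would write $G_A=\int_0^1 K_t\,dt+\int_1^\infty K_t\,dt$, where $K_t(x,y):=\sum_{\ld_n\geq\lambda_+}e^{-t\ld_n^2}\varphi_n(x)\overline{\varphi_n(y)}$ is the kernel of $e^{t\Dla}\P_{\ge \lambda_+}$, i.e. the magnetic heat kernel minus the $t$-independent smooth term $\mathbf{1}_{\ld_0=0}\varphi_0(x)\overline{\varphi_0(y)}$. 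The tail operator $\int_1^\infty e^{t\Dla}\P_{\ge \lambda_+}\,dt=e^{\Dla}\circ(-\Dla)^{-1}\P_{\ge \lambda_+}$ has rapidly decaying symbol $\ld_n^{-2}e^{-\ld_n^2}\mathbf{1}_{\ld_n\ge\lambda_+}$; together with standard polynomial bounds on eigenfunctions and $\ld_n\sim n^{1/2}$ from \eqref{asympt ldn}, this makes $(x,y)\mapsto\int_1^\infty K_t(x,y)\,dt$ smooth on $\M\times\M$, hence absorbed into $\widetilde{G_A}$. Off the diagonal, Lemma~\ref{LEM:PM} (with $h=\sqrt t$ and $\psi(\xi)=e^{-\xi}$ on $\xi\ge0$) gives $|K_t(x,y)|\les t^{-1}e^{-c\dg(x,y)^2/t}$ uniformly in $t\in(0,1]$, so $\int_0^1 K_t\,dt$ is continuous where $x\neq y$ (and $G_A(\cdot,y)$ smooth there). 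Everything thus reduces to the short-time integral near the diagonal.

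For that I would invoke the classical parametrix for the heat kernel of the Laplace-type operator $-\Dla$: on a neighbourhood $\mathcal W$ of the diagonal and for $t\in(0,1]$,
\[
K_t(x,y)=\frac{u_0(x,y)}{4\pi t}\,e^{-\dg(x,y)^2/(4t)}+r_t(x,y),
\]
with $u_0$ smooth, $u_0(x,x)=1$, and $r_t$ jointly continuous on $(0,1]\times\mathcal W$ and bounded uniformly in $t$ — this comes from the usual expansion $K_t=\frac1{4\pi t}e^{-\dg(x,y)^2/(4t)}\sum_{j=0}^{L}t^j u_j+E_L$ with $|E_L|\les t^{L-1}e^{-c\dg(x,y)^2/t}$, throwing the $j\ge1$ terms and $E_L$ (for $L\ge1$) into $r_t$. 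The only magnetic-specific point is the leading coefficient: $u_0(x,y)$ equals the metric density factor $1+O(\dg(x,y)^2)$ times the line-bundle parallel transport $\exp\!\big(i\int_{[y,x]}A\big)=1+O(\dg(x,y))$ along the minimizing geodesic, so that $u_0(x,y)=1+O(\dg(x,y))$, which is all that will be used. (Alternatively one may run the elliptic pseudodifferential calculus for $(-\Dla+1)^{-1}$, or reduce to the non-magnetic Green's function of $\Delta_\gm$ of \cite{ORTW} and absorb the first-order perturbation $-\Dla+\Delta_\gm$ by elliptic regularity; the heat-kernel route seems the most self-contained.)

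Then I would integrate in $t$. The term $\int_0^1 r_t\,dt$ is continuous up to $\{x=y\}$ by dominated convergence, and thus enters $\widetilde{G_A}$. For the leading term, the substitution $s=\dg(x,y)^2/(4t)$ gives
\[
\int_0^1\frac1{4\pi t}\,e^{-\dg(x,y)^2/(4t)}\,dt=\int_{\dg(x,y)^2/4}^{\infty}\frac{e^{-s}}{4\pi s}\,ds=-\frac1{2\pi}\log\dg(x,y)+h\big(\dg(x,y)\big),
\]
with $h$ continuous on $[0,\infty)$; since $u_0=1+O(\dg)$, the extra factor contributes $(u_0-1)\big(-\tfrac1{2\pi}\log\dg\big)=O\big(\dg(x,y)|\log\dg(x,y)|\big)$, which extends continuously by $0$ on the diagonal. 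Combining with the off-diagonal part proves (i). For (ii) one differentiates in $x$: a spatial derivative gains a factor $\les t^{-1/2}$ against the Gaussian, so $|\nabla_x K_t(x,y)|\les t^{-3/2}e^{-c\dg(x,y)^2/t}$ for $t\in(0,1]$, and the same substitution yields $\int_0^1\big|\nabla_x\big(\frac1{4\pi t}e^{-\dg(x,y)^2/(4t)}\big)\big|\,dt\les\dg(x,y)^{-1}$; the contributions of $\nabla_x u_0$, of the $j\ge1$ terms and of $\nabla_x E_L$ integrate to $O(|\log\dg|)$ or are bounded, and $\int_1^\infty\nabla_x K_t\,dt$ is smooth. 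Since $\dg$ is bounded on $\M\times\M$, this gives $|\nabla_x G_A(x,y)|\les\dg(x,y)^{-1}$ everywhere.

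The only genuine work is the short-time parametrix with its correct normalization: that the logarithmic coefficient is exactly $-\frac1{2\pi}$ (forced through the Euclidean fundamental solution $-\frac1{2\pi}\log|x-y|$ of $-\Delta$ on $\R^2$ in geodesic normal coordinates) and that the magnetic holonomy perturbs the leading coefficient only at order $O(\dg)$. Once that classical input is granted, the $t$-integration and the gluing of the near- and off-diagonal regimes are routine.
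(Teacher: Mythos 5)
Your argument is correct in outline, but it follows a partly different route from the paper, so let me compare. For part (i) the paper does not use a heat-kernel parametrix at all: it writes the resolvent identity $(1-\Delta_A)^{-1}-(1-\Delta_\gm)^{-1}=(1-\Delta_A)^{-1}(\Delta_\gm-\Delta_A)(1-\Delta_\gm)^{-1}$, uses that $\Delta_\gm-\Delta_A$ is first order to gain three derivatives (hence a continuous kernel), compares the massive and massless Green's functions through the absolutely convergent series $\sum \lambda_n^{-2}\langle\lambda_n\rangle^{-2}\varphi_n(x)\overline{\varphi_n(y)}$ (Corollary \ref{COR:avg eig} and \eqref{asympt ldn}), and then quotes the known logarithmic behaviour of the non-magnetic Green's function $G_0$ from Aubin. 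Your route instead derives the $-\frac1{2\pi}\log$ coefficient directly from the Gaussian time integral of the Minakshisundaram--Pleijel parametrix for the bundle Laplacian, with the parallel-transport factor $u_0=1+O(\dg)$; this is self-contained modulo that classical input, whereas the paper's argument is softer and only needs mapping properties plus the known scalar case (which is in fact the alternative you mention parenthetically). For part (ii) your structure coincides with the paper's: both split $G_A=\int_0^1 P_A(t)\,dt+R_A$ with $R_A$ smooth and estimate $\nabla_x$ of the short-time kernel, the paper realizing the expansion via the semiclassical symbol calculus $\sum_k t^{k/2}p_k(x,\sqrt t D)+R_{-K,t}$ (as in \cite{ORTW}) and computing the principal Gaussian term explicitly in geodesic normal coordinates, while you use the geometric Gaussian parametrix; the resulting computations ($t^{-3/2}$ gain per derivative, change of variables $s\sim\dg^2/t$, and $|\log\dg|\lesssim\dg^{-1}$ for the lower-order terms) are the same. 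Two small points to tighten: Lemma \ref{LEM:PM} gives the bound $t^{-1}\jb{\dg(x,y)/\sqrt t}^{-L}$, i.e.\ rapid polynomial rather than Gaussian decay (which still suffices for your off-diagonal continuity claim), and the uniform bounds on $r_t$ and on $\nabla_x E_L$ require taking the order $L$ of the parametrix large enough, not merely $L\ge1$ — a routine adjustment, but worth stating since you also differentiate the remainder in (ii).
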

\begin{proof}
Since $-\Dla$ has the same principal symbol as $-\Dlg$, we will recover the bounds of Lemma~\ref{LEM:GreenLog} from that of the standard Laplacian in $\R^2$. First, we can get \eqref{GAlog} by using the following resolvent identity:
\begin{align*}
(1-\Dla)^{-1}-(1-\Dlg)^{-1}=(1-\Dla)^{-1}(\Dlg-\Dla)(1-\Dlg)^{-1} : H^s(\M)\to H^{s+3}(\M)
\end{align*}
for any $s\in\R$, where we used that $\Dlg-\Dla$ is a differential operator of order 1; see \eqref{Delta}. This shows that the kernel $G_{\langle A\rangle}$ of $(1-\Dla)^{-1}$ differs from that of $(1-\Dlg)^{-1}$ by a continuous function. Moreover, denoting $G_{\langle 0\rangle}$ for the massive Green's function, which is the kernel of $(1-\Dlg)^{-1}$, it holds
\begin{align*}
G_A(x,y)-G_{\langle A\rangle}(x,y)&= \sum_{\ld_n\ge \ld_+}\big(\frac1{\ld_n^2}-\frac1{\jb{\ld_n}^2}\big)\varphi_n(x)\cj{\varphi_n(y)} + \mathbf{1}_{\lambda_0=0}\varphi_0(x)\cj{\varphi_0(y)}\\
&=\sum_{\ld_n\ge\ld_+}\frac1{\ld_n^2\langle\ld_n\rangle^2}\varphi_n(x)\cj{\varphi_n(y)}+ \mathbf{1}_{\lambda_0=0}\varphi_0(x)\cj{\varphi_0(y)},
\end{align*}
where the right-hand side is a continuous function in view of Corollary~\ref{COR:avg eig} and \eqref{asympt ldn}. Similarly, $G_{\langle 0\rangle}-G_0$ is a continuous function, where $G_0$ is the usual Green's function of the Laplace-Beltrami operator on $(\M,\gm)$. It is well-known (see e.g. \cite[Section 4.2]{Aubin}) that $G_0$ satisfies \eqref{GAlog}, which, from the previous discussion, shows that $G_A$ does too.

To get \eqref{GAnabla}, we express
\begin{align}\label{GA}
G_A=\int_0^1P_A(t)dt+R_A
\end{align}
where $$P_A(t,x,y)=\sum_{n\ge 0}e^{-t\ld_n^2}\varphi_n(x)\cj{\varphi_n(y)}$$ is the kernel of $e^{t\Dla}$, and 
\begin{align*}
R_A(x,y)=\sum_{\ld_n\ge\ld_+}\frac{e^{-\ld_n^2}}{\ld_n^2}\varphi_n(x)\cj{\varphi_n(y)}-\mathbf{1}_{\ld_0=0}\varphi_0(x)\cj{\varphi_0(y)}.
\end{align*}
In particular $R_A$ is smooth on $\M\times\M$, so that we only need to estimate the first term in \eqref{GA}. Since $P_A(t,x,y)$ is smooth for any $t\ge 0$ if $\dg(x,y)$ is large enough, we only estimate the case where $\dg(x,y)\ll 1$. Now we notice that we can express, through the functional calculus of $-\Dla$, $e^{t\Dla}$ as $\psi(-t\Dla)$ where $\psi$ is a Schwartz function such that $\psi(z)=e^{-z}$ for $z\ge0$. Using then the notations from \cite{BGT,ORT,ORTW} as in the proof of Lemma~\ref{LEM:semigroup} above, we have in geodesic normal coordinates around $x$
\begin{align*}
P_A(t,x,y)=\sum_{k=0}^{K-1}t^\frac{k}2\int_{\R^2}e^{i(x-y)\cdot\eta}p_k(x,\sqrt{t}\eta)d\eta + R_{-K,t}(x,y),
\end{align*}
for arbitrary $K\ge 0$. From the smoothing property \eqref{remainder} of the operator with kernel $R_{-K,t}$ and Sobolev embedding, we get
\begin{align*}
\|R_{-K,t}\|_{L^\infty_{x,y}}\les \|R_{-K,t}\|_{H^{-1-\delta}\to H^{1+\delta}} \les t^\frac{K-2-2\delta}2
\end{align*}
for any $0<\dl\ll 1$. Similarly,
\begin{align*}
\|\nabla_x R_{-K,t}\|_{L^\infty_{x,y}}\les \|R_{-K,t}\|_{H^{-1-\delta}\to H^{2+\delta}}\les t^\frac{K-3-2\delta}2.
\end{align*}
This is enough for \eqref{GAnabla} after integrating in time, provided that we take $K\ge 2$.

As for the kernels in the sum for $k\ge 1$, since $p_k$ is Schwartz in $\eta$, after changing variables we get
\begin{align*}
t^\frac{k}{2}\Big\|\int_{\R^2}e^{i(x-y)\cdot\eta}p_k(x,\sqrt{t}\eta)d\eta\Big\|_{L^\infty_{x,y}}&=t^\frac{k-2}{2}\Big\|\int_{\R^2}e^{i(x-y)\cdot t^{-\frac12}\zeta}p_k(x,\zeta)d\zeta\Big\|_{L^\infty_{x,y}}\\
&\les t^\frac{k-2}2\|p_k\|_{L^\infty_xL^1_\eta},
\end{align*}
and integrating by parts,
\begin{align*}
&t^{\frac{k-2}{2}}\Big\|\nabla_x\int_{\R^2}e^{i(x-y)\cdot t^{-\frac12}\zeta}p_k(x,\zeta)d\zeta\Big\|_{L^\infty_{x,y}}\\
&\qquad\qquad= t^{\frac{k-2}2}\Big\|\int_{\R^2}e^{i(x-y)\cdot t^{-\frac12}\zeta}\big(i t^{-\frac12}p_k(x,\zeta)\zeta+\nabla_x p_k(x,\zeta)\big)\Big\|_{L^\infty_{x,y}}\\
&\qquad\qquad\les t^{\frac{k-2}{2}}\Big\|\int_{\R^2}\big(|\nabla_\zeta p_k(x,\zeta)|t^{-\frac12}\frac{\sqrt{t}}{|x-y|}|\zeta|+|\nabla_x p_k(x,\zeta)|\big)\Big\|_{L^\infty_{x,y}}\\
&\qquad\qquad\les \frac{t^{\frac{k-2}2}}{|x-y|}\big(\big\||\zeta||\nabla_\zeta p_k|\big\|_{L^\infty_x L^1_\zeta}+\|\nabla_xp_k\|_{L^\infty_xL^1_\zeta}\big).
\end{align*}
This is again enough when $k\ge 1$ to get \eqref{GAnabla} after integrating in time.

Finally, since the principal symbol in the expansion above is given by $p_0(x,\sqrt{t}\eta)=\psi(t|\eta|_x^2)=e^{-t|\eta|_x^2}$, the contribution of the principal part is
\begin{align*}
&\gm(x)^{j,\ell}\partial_{x_j}\int_0^1\int_{\R^2}e^{i(x-y)\cdot \eta}e^{-t\gm(x)\eta\cdot\eta}d\eta dt\\
&= \gm(x)^{j,\ell}\int_0^1\partial_{x_j}\big[|\gm(x)|^{-\frac12}(2\pi t)^{-1}e^{-\frac1{4t}\gm(x)^{-1}(x-y)\cdot(x-y)}\big]dt\\
&=\gm(x)^{j,\ell}\int_0^1\big[\partial_{x_j}|\gm(x)|^{-\frac12}-\frac1{4t}(\partial_{x_j}\gm(x)^{j_1,j_2})(x_{j_1}-y_{j_1})(x_{j_2}-y_{j_2}) - \frac{\gm(x)^{j,j_2}(x_j-y_j)}{2t}\big]\\
&\qquad\qquad\qquad\qquad\times(2\pi t)^{-1} e^{-\frac1{4t}\gm(x)^{-1}(x-y)\cdot(x-y)}dt.
\end{align*}
Now, since we use geodesic normal coordinates around $x$, we have $\gm(x)_{j_1,j_2}=\gm(x)^{j_1,j_2}=\delta_{j_1,j_2}$ and $\partial_j\gm(x)^{j_1,j_2}=0$. This yields
\begin{align*}
\Big\|\nabla_x\Big(\int_0^1\int_{\R^2}e^{i(x-y)\cdot\eta}p_0(x,\sqrt{t}\eta)d\eta dt\Big)\Big\|_{L^\infty_{x,y}}&\les |x-y| \int_0^1 t^{-2}e^{-\frac{|x-y|^2}{4t}}dt \intertext{Making the change of variables $r=\frac{|x-y|}{2\sqrt{t}}$, we can continue with}
&\les |x-y| \int_{|x-y|}^\infty \frac{r^4}{|x-y|^4}e^{-r^2}\frac{|x-y|^2}{r^3}dr\\
&\les |x-y|^{-1}\Big[-e^{r^2}\Big]_{|x-y|}^\infty \les |x-y|^{-1}.
\end{align*}
This finally shows \eqref{GAnabla} and finishes the proof of Lemma~\ref{LEM:GreenLog}.
\end{proof}

Next, we investigate pointwise bounds on the regularized Green's function that we will need in constructing the stochastic objects in Section~\ref{SEC : proba}. Hereafter, we write $\diag \deff \{(x,x),~x\in\M\}$ for the diagonal in $\M\times\M$.
\begin{lemma}\label{LEM:green function 1}
Let $\psi\in\S(\R)$ such that $\psi(0)=1$. Then:\\
\textup{(i)} There exists $C>0$ such that for any $N\in\N$ and $(x,y)\in\M\times\M\setminus\diag$ it holds
\begin{align}\label{GN1}
\Big|(\psi\otimes\psi)\big(-N^{-2}\Dla\big)G_A(x,y) +\frac1{2\pi}\log\big(\dg(x,y)+N^{-1}\big)\Big| \le C.
\end{align}
\textup{(ii)} Let $N_1, N_2 \in \N$ and fix $j \in \{1,2\}$. Then for all $0<\dl\ll1$ there exists $C>0$ such that for any $(x,y)\in\M\times\M\setminus\diag$ it holds that 
\begin{align}\label{GN2}
\begin{split}
&\Big|(\psi \otimes\psi)\big(-N_j^{-2}\Dla\big)G_A(x,y)-\psi\big(-N_1^{-2}\Dla\big)  \otimes\psi \big(-N_2^{-2}\Dla \big) G_A(x,y)\Big|\\
&\qquad\le C\min\Big\{-\log\big(\dg(x,y)+\min(N_1,N_2))^{-1}\big)\vee 1; \min(N_1,N_2)^{\dl-1}\dg(x,y)^{-1}\Big\}.
\end{split}
\end{align}
\end{lemma}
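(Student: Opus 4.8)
The plan is to reduce everything to the structural decomposition already established in Lemma~\ref{LEM:GreenLog}, namely $G_A(x,y) = -\frac1{2\pi}\log\dg(x,y) + \widetilde{G_A}(x,y)$ with $\widetilde{G_A}$ continuous on $\M\times\M$, together with the kernel bound of Lemma~\ref{LEM:PM}. First I would handle part (i). Apply $(\psi\otimes\psi)(-N^{-2}\Dla)$ to the decomposition: since $\widetilde{G_A}$ is continuous (hence bounded) and $(\psi\otimes\psi)(-N^{-2}\Dla)$ is bounded on $L^\infty$ uniformly in $N$ by Corollary~\ref{COR : bernstein} (with $p=q=\infty$), the $\widetilde{G_A}$ contribution is $O(1)$. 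So the whole matter reduces to estimating $(\psi\otimes\psi)(-N^{-2}\Dla)$ applied to the logarithm, i.e. $-\frac1{2\pi}\bigl(\psi(-N^{-2}\Dla)\otimes\psi(-N^{-2}\Dla)\bigr)\log\dg(x,y)$, and showing it equals $-\frac1{2\pi}\log(\dg(x,y)+N^{-1}) + O(1)$. Writing $\K_{N^{-1}}(x,z) = \sum_n\psi(N^{-2}\ld_n^2)\varphi_n(x)\cj{\varphi_n(z)}$, the quantity is a double convolution $\iint \K_{N^{-1}}(x,z)\K_{N^{-1}}(w,y)\bigl(-\tfrac1{2\pi}\log\dg(z,w)\bigr)\,dV(z)\,dV(w)$. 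Using the kernel bound $|\K_{N^{-1}}(x,z)| \les N^2\jb{N\dg(x,z)}^{-L}$ from Lemma~\ref{LEM:PM}, one splits into the regime $\dg(x,y)\gtrsim N^{-1}$ (where $\log(\dg(x,y)+N^{-1}) = \log\dg(x,y) + O(1)$ and the smoothing is a harmless mollification at scale $N^{-1}\lesssim \dg(x,y)$, producing an $O(1)$ error by a standard kernel-against-log estimate) and the regime $\dg(x,y)\lesssim N^{-1}$ (where both terms are $\frac1{2\pi}\log N + O(1)$: for the mollified log one uses that mollifying $\log$ at scale $N^{-1}$ caps it at $\sim\log N$, and $\log(\dg(x,y)+N^{-1}) = -\log N + O(1)$ there). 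This matching-of-both-sides argument follows \cite[Subsection 2.2]{ORTW} essentially verbatim, since only the principal symbol enters.

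For part (ii), the key observation is that the difference operator $(\psi\otimes\psi)(-N_j^{-2}\Dla) - \psi(-N_1^{-2}\Dla)\otimes\psi(-N_2^{-2}\Dla)$ acting on $G_A$ is, say for $j=1$, the operator $\psi(-N_1^{-2}\Dla)\otimes\bigl(\psi(-N_1^{-2}\Dla)-\psi(-N_2^{-2}\Dla)\bigr)$ applied to $G_A$ in the second variable (and symmetrically for $j=2$), so it suffices to control $\bigl(\psi(-N_1^{-2}\Dla)-\psi(-N_2^{-2}\Dla)\bigr)$ applied to $G_A$ with the other smoothing just contributing $L^\infty$-boundedness. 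Again reduce via Lemma~\ref{LEM:GreenLog}: the continuous part $\widetilde{G_A}$ contributes $O(1)$ (absorbed in both bounds since $-\log(\dg(x,y)+\min(N_i)^{-1})\vee 1 \ge 1$), so one must estimate the difference of two mollifications of $-\frac1{2\pi}\log\dg(x,y)$ at scales $N_1^{-1}$ and $N_2^{-1}$. For the first bound in the $\min$, use the crude estimate that each mollification of $\log\dg(x,y)$ at scale $h$ is bounded by $-\log(\dg(x,y)+h)\vee 1$ up to $O(1)$, from part (i)'s analysis, and take $h=\min(N_1,N_2)^{-1}$. For the second bound, write the difference as an integral: using \eqref{YYY10}-type manipulations or simply $\psi(-N_1^{-2}\Dla)-\psi(-N_2^{-2}\Dla) = \int$ of a derivative in the scaling parameter, represent it as $\int_{N_1^{-2}}^{N_2^{-2}} \frac{d}{ds}\psi(-s\Dla)\,ds$ with $\frac{d}{ds}\psi(-s\Dla) = \Dla\psi'(-s\Dla)$; crucially $\Dla G_A$ is (a multiple of) a delta minus a smooth function, or more usefully one gains a derivative landing on $G_A$, invoking the gradient bound $|\nabla_x G_A(x,y)| \les \dg(x,y)^{-1}$ from Lemma~\ref{LEM:GreenLog}(ii). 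Pairing the kernel bound for $\widetilde\K$ at scale $\sqrt s$ (which provides the factor $\min(N_1,N_2)^{\dl}$ from a small loss, with $\dl>0$ coming from not quite integrating the singularity sharply) against $\dg(x,y)^{-1}$ then yields the $\min(N_1,N_2)^{\dl-1}\dg(x,y)^{-1}$ bound.

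The main obstacle I expect is the bookkeeping in part (ii)'s second bound: getting the clean gain of (almost) one power of $\min(N_1,N_2)^{-1}$ requires carefully exploiting that the difference of the two truncations is supported, in frequency, on $|\ld_n| \gtrsim \min(N_1,N_2)$ — so that the symbol $\psi(N_1^{-2}\ld_n^2)-\psi(N_2^{-2}\ld_n^2)$ effectively carries an extra factor $\min(N_1,N_2)\ld_n^{-1}$ (to the power $1-\dl$), which when combined with $\Dla^{-1}$ in $G_A$ and one spatial derivative gives the stated decay. Making this rigorous means either a Littlewood--Paley decomposition of the difference operator and summing geometric series (losing the $N^\dl$), or a direct kernel estimate as in Lemma~\ref{LEM:PM} for the symbol $\xi\mapsto(\psi(N_1^{-2}\xi)-\psi(N_2^{-2}\xi))\xi^{-1}$ and pairing against $\nabla_x G_A$. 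The $\dl>0$ slack is precisely what lets one avoid a logarithmic divergence at the endpoint of the time integral. Everything else — the $L^\infty$-boundedness of the smoothing operators, the continuity of $\widetilde{G_A}$, the log asymptotics — is either already in the excerpt or a routine adaptation of \cite{ORTW}.
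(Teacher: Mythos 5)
Your plan is, in effect, a reconstruction of the argument the paper outsources: the paper's own proof of this lemma is a two-line reduction to \cite[Lemma 2.8, Corollary 2.12, Lemma 2.13]{ORTW}, justified by the observation that those proofs only use Lemma~\ref{LEM:GreenLog} and the principal symbol, which is exactly the toolkit you deploy. Your part (i) is fine: splitting $G_A$ by Lemma~\ref{LEM:GreenLog}, absorbing $\widetilde{G_A}$ via the uniform $L^\infty$-boundedness of the mollifiers (Corollary~\ref{COR : bernstein} in each variable), and matching the mollified logarithm with $-\frac1{2\pi}\log(\dg(x,y)+N^{-1})$ in the regimes $\dg(x,y)\gtrless N^{-1}$ is the standard route; the only point worth making explicit is that the constant matching uses $\psi(0)=1$ (so the kernels have mass $1+o(1)$), but since \eqref{GN1} only asks for an $O(1)$ error this part causes no trouble.

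There is, however, a genuine gap in part (ii): the claim that the continuous part $\widetilde{G_A}$ ``contributes $O(1)$, absorbed in both bounds'' is false for the second member of the min. Since $\dg(x,y)\les 1$ on a compact surface, $\min(N_1,N_2)^{\dl-1}\dg(x,y)^{-1}\to 0$ for fixed $x\neq y$ as $\min(N_1,N_2)\to\infty$, so an $O(1)$ remainder cannot be absorbed there; and mere continuity of $\widetilde{G_A}$ gives convergence of its mollifications with \emph{no rate}, so the split ``log $+$ continuous'' cannot yield the quantitative decay in $\min(N_1,N_2)$ — which is precisely what \eqref{GN2} is for (it produces the $N_1^{-\dl}$ rate in Step 2 of Proposition~\ref{PROP:construction stochastic objects}). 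The repair is to run your own second mechanism on the \emph{full} $G_A$, not on the log part: write $\psi(-N_1^{-2}\Dla)-\psi(-N_2^{-2}\Dla)=-\int \Dla\psi'(-s\Dla)\,ds$ over $s$ between $N_1^{-2}$ and $N_2^{-2}$, use $-\Dla G_A(\cdot,y)=\delta_y-\Pi_0$ with $\Pi_0$ the smooth finite-rank low-mode projector, bound the resulting kernel $\sum_{\ld_n\ge\ld_+}\psi'(s\ld_n^2)\varphi_n(x)\cj{\varphi_n(y)}$ by $s^{-1}\jb{s^{-1/2}\dg(x,y)}^{-L}$ via Lemma~\ref{LEM:PM} (applied to $\psi'\in\S$), and integrate in $s$; the $\Pi_0$ term contributes $O(\min(N_1,N_2)^{-2})$, which is admissible, and the gradient bound \eqref{GAnabla} is then not even needed (alternatively, use the near-mean-zero kernel of the difference of mollifiers against \eqref{GAnabla}). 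Two smaller points: the assertion that the difference of truncations is ``supported on $\ld_n\gtrsim\min(N_1,N_2)$'' is only approximate for Schwartz $\psi$ (one has smallness $\les \ld_n^2\min(N_1,N_2)^{-2}$ at low frequency, which suffices); and your triangle-inequality argument for the first member caps the pure term at scale $N_j^{-1}$, which matches the stated bound only when $N_j=\min(N_1,N_2)$ — for the other choice of $j$ the term-by-term bound (and, near the diagonal, the stated bound itself) is more delicate, but only the case $N_j=\min(N_1,N_2)$ is used in the paper.
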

\begin{proof}
The bounds \eqref{GN1} and \eqref{GN2} with the Green function of $-\Dlg$ in place of $G_A$ follow from \cite[Lemma 2.8, Corollary 2.12, and Lemma 2.13]{ORTW}. Since they only relied on the estimates of Lemma~\ref{LEM:GreenLog} and on the expression of the principal symbol of $-\Dlg$, which is the same for $-\Dla$, the same proofs apply here.
\end{proof}

\section{Probabilistic tools}\label{SEC : proba}
We now recall the probabilistic tools that we will use when estimating the renormalized stochastic objects $:\!\Psi_N^{j_1}\cj{\Psi_N}^{j_2}\!:$ appearing in \eqref{renormalization stochastic objects}.
\subsection{Algebraic properties of the renormalization} 
We start by looking at the algebraic properties of the renormalization performed in \eqref{renormalization nonlinearity}. We refer to \cite{Trenberth} for the proofs of these identities. The first one is concerned with the expression of generalized Laguerre polynomials of a sum.
\begin{lemma}\label{LEM:sum}
Let $m \geq 1$ and $l \geq 0$. Then the following identity holds:
\begin{align*}
(-1)^m m! L_m^{(1)} \big( |x+ y|^2; \s \big)(x+y) = \sum_{  \substack{ 0 \leq j_1 \leq m+1 \\ 0 \leq j_2 \leq m } } \binom{m+1}{j_1} \binom{m}{j_2} P^{m,\s}_{j_1,j_2}(y, \cj{y}) x^{j_1} \cj{y}^{j_2}, \quad (x,y) \in \C^2,
\end{align*}
where
\begin{align*}
P^{m,\s}_{j_1,j_2}(y, \cj{y}) = \begin{cases} (-1)^{m-j_2} (m-j_2)! L^{(j_2-j_1+ 1)}_{m-j_2} \big( |y|^2; \s) y^{j_2-j_1+1},& \quad j_1+1 \geq j_1 \\ (-1)^{m+1-j_1} (m+1-j_1)! L^{(j_1-j_2 -1)}_{m+1-j_1} \big( |y|^2; \s)  \cj{y}^{j_1-j_2-1},& \quad j_2+1 < j_1
\end{cases}
\end{align*}
\end{lemma}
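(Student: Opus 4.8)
The plan is to derive this identity from the standard generating function for the generalized Laguerre polynomials together with the definition $L_k^{(\ell)}(x;\s^2)=\s^{2k}L_k^{(\ell)}(x/\s^2)$ and the Wick-ordering convention \eqref{renormalization stochastic objects}. The cleanest route is via generating functions: I would introduce a formal parameter $t\in(0,1)$ and study the generating series $\sum_{m\ge 0}t^m(-1)^m m!\,L_m^{(1)}(|x+y|^2;\s)(x+y)$. Using the scaling relation and the generating function $\frac{1}{(1-t)^{\ell+1}}e^{-tx/(1-t)}=\sum_k t^kL_k^{(\ell)}(x)$ with $\ell=1$, one checks that this series has a closed form, essentially $(x+y)(1-t)^{-2}\exp\big(-\frac{t|x+y|^2}{\s^2(1-t)}\big)$ after accounting for the sign and factorial normalization (the $(-1)^m m!$ matches the derivative formula \eqref{YYY10} iterated, so it is cleaner to express things through the connection $L_m^{(1)}=-\frac{d}{dx}L_m^{(0)}$ or directly through the Wick powers).

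Next I would expand $|x+y|^2=|y|^2+x\cj y+\cj x y+|x|^2$ inside the exponential and factor the exponential as a product of exponentials in the variables $|y|^2$, $x\cj y$, $\cj x y$, $|x|^2$. Each factor is itself a generating function: the $x\cj y$ and $\cj x y$ factors produce the binomial coefficients $\binom{m+1}{j_1}$ and $\binom{m}{j_2}$ upon expansion (these come from expanding $e^{s}$ and collecting powers), while the remaining $|y|^2$-dependent factor reassembles into generalized Laguerre polynomials in $|y|^2$ with the shifted index $\ell=j_2-j_1+1$, after again invoking the generating function in the other direction. The powers $x^{j_1}\cj y^{j_2}$ appear naturally from tracking the monomials. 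One has to be careful with the two cases in the definition of $P^{m,\s}_{j_1,j_2}$: when $j_1\le j_2+1$ the surviving factor of $y$ has nonnegative exponent $j_2-j_1+1$, and when $j_1>j_2+1$ it is the conjugate $\cj y$ that survives with exponent $j_1-j_2-1$; this dichotomy is exactly the dichotomy built into \eqref{renormalization stochastic objects}, so it is really the same computation performed on a Gaussian and on its conjugate.

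Alternatively—and this is probably the shortest honest proof—one reinterprets the identity probabilistically: let $Z=X+Y$ where $X,Y$ are independent centered complex Gaussians with $\E|X|^2+\E|Y|^2=\s$ (here $X,Y$ play the role of high- and low-frequency pieces of $\Psi_N$), so that $(-1)^m m!L_m^{(1)}(|Z|^2;\s)Z=\,:\!|Z|^{2m}Z\!:\,$ by \eqref{renormalization nonlinearity}. Wick ordering is multiplicative over independent sums in the sense that $:\!|X+Y|^{2m}(X+Y)\!:$ equals the sum of products of Wick powers of $X$ against Wick powers of $Y$, with multinomial coefficients; collecting the $X$-Wick powers (which, since $X$ has its own variance, are just ordinary monomials $x^{j_1}\cj x^{j_2}$ in a formal substitution because we then set the $X$-variance to zero, i.e. replace $:\!X^{j_1}\cj X^{j_2}\!:$ by $x^{j_1}\cj x^{j_2}$) against the $Y$-Wick powers $:\!Y^{a}\cj Y^{b}\!:\,=P^{m,\s}_{\cdot,\cdot}(y,\cj y)$ yields precisely the stated formula once one matches the combinatorial factors $\binom{m+1}{j_1}\binom{m}{j_2}$. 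I expect the main obstacle to be purely bookkeeping: correctly pinning down the binomial coefficients and the index shifts in the two regimes $j_1\le j_2+1$ versus $j_1>j_2+1$, and making sure the normalization conventions ($\s$ versus $\s^2$, the $(-1)^m m!$ prefactor, and the $\frac{1}{2m}x^m$ in $\VV$) are consistently tracked. Since the paper attributes the identities to \cite{Trenberth}, I would carry out the generating-function computation in detail only to the point where the structure is transparent and then cite \cite{Trenberth} for the remaining verification of coefficients.
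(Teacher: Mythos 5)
First, a point of comparison: the paper does not prove this lemma at all --- it simply defers to \cite{Trenberth} --- so there is no internal argument to match, and your closing plan (check the structure, cite \cite{Trenberth} for the coefficient bookkeeping) is in effect exactly what the authors do. Your second, probabilistic route does identify the content of the identity correctly: writing $:\!z^{j}\cj{z}^{k}\!:_\s$ for the expression in \eqref{renormalization stochastic objects}, the lemma is precisely the shift formula $:\!(x+y)^{m+1}\cj{(x+y)}^{m}\!:_\s\;=\;\sum_{j_1,j_2}\binom{m+1}{j_1}\binom{m}{j_2}\,x^{j_1}\cj{x}^{j_2}\,:\!y^{m+1-j_1}\cj{y}^{m-j_2}\!:_\s$ with $x$ a deterministic shift and $y$ the Gaussian variable; note in passing that the statement as printed has two typos which your route 2 implicitly corrects (the factor $\cj{y}^{j_2}$ should be $\cj{x}^{j_2}$, and the case condition should read $j_2+1\ge j_1$ --- this is the form actually used in \eqref{vN}).

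Two caveats, one of which is a genuine error. (a) Route 1 as written does not work: the closed form $(x+y)(1-t)^{-2}\exp\big(-t|x+y|^2/(\s(1-t))\big)$ is the generating function of $\sum_m t^m L^{(1)}_m(|x+y|^2;\s)(x+y)$ \emph{without} the prefactor $(-1)^m m!$; once that factorial is inserted the series (which is not even convergent) has no such elementary closed form, and \eqref{YYY10} cannot repair this. The correct generating function is the two-parameter exponential one, $\sum_{j,k\ge 0}\frac{s^j t^k}{j!\,k!}\,:\!z^{j}\cj{z}^{k}\!:_\s\;=\;e^{sz+t\cj{z}-\s s t}$, which packages the signs and factorials automatically; substituting $z=x+y$, factoring off $e^{sx+t\cj{x}}$ (which produces the binomial coefficients) and extracting the coefficient of $\frac{s^{m+1}t^{m}}{(m+1)!\,m!}$ gives the lemma, including the dichotomy $j_2+1\ge j_1$ versus $j_2+1<j_1$, in a few lines. (b) Route 2, phrased as ``Wick ordering is multiplicative over independent sums, then set the $X$-variance to zero,'' is very close to restating the lemma rather than proving it: that binomial property of Wick powers in the complex Gaussian setting \emph{is} the identity in question, written in Laguerre form. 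So you still need a mechanism --- the two-variable generating function above, or an induction on $m$ using \eqref{YYY10} by differentiating in $x$ and $\cj{x}$ --- or else an explicit citation, which is what the paper itself settles for. With that mechanism supplied, your outline becomes a complete, self-contained proof, which is more than the paper provides.
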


The second one deals with the covariance of generalized Laguerre polynomials of Gaussian random variables.
\begin{lemma} \label{LEM:covariance prop} 
Let $f$ and $g$ be mean-zero complex \textup{(}jointly\textup{)} Gaussian random variables with variances $\s_f$ and $\s_g$ respectively. Then,
\begin{align*}
\E \left[  L_k^{(l)} \big( |f|^2; \s_f \big) f^l \cj{ L_m^{(l)} \big( |g|^2; \s_g \big) g^l }  \right] = \delta_{km} \frac{(k+l)!}{k!}   |\E[ f \cj{g} ]|^{2k}  \E[ f \cj{g} ] ^l .
\end{align*}
\end{lemma}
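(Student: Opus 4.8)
The plan is to compute the bivariate generating function of the left-hand side in the indices $k$ and $m$ and to recognize it in closed form. Write $\rho \deff \E[f\cj g]$. Since $f$ and $g$ are mean-zero \emph{complex} Gaussian, they are circularly symmetric, i.e.\ $\E[f^2]=\E[g^2]=\E[fg]=0$, so $(f,g)$ is the centred Gaussian on $\C^2$ with Hermitian covariance $\Sigma = \left(\begin{smallmatrix}\s_f & \rho\\ \cj\rho & \s_g\end{smallmatrix}\right)$. First I would assume $\det\Sigma = \s_f\s_g - |\rho|^2 > 0$; the degenerate cases $\det\Sigma = 0$ follow either by continuity of both sides in the entries of $\Sigma$ or by a short direct check (with the usual $0^0 = 1$ convention).

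Next, for $s,t\in\C$ with $|s|\s_f$ and $|t|\s_g$ small, I would use the generating function of the renormalized Laguerre polynomials recorded above, namely $\sum_{k\ge 0}s^k L_k^{(l)}(x;\s^2) = (1-s\s^2)^{-(l+1)}\exp\!\big(-\tfrac{sx}{1-s\s^2}\big)$, together with the fact that $L_m^{(l)}(|g|^2;\s_g)$ is real. Multiplying the two series, interchanging the (absolutely convergent, for $s,t$ small) sums with the expectation, and using that $\cj{L_m^{(l)}(|g|^2;\s_g)g^l} = L_m^{(l)}(|g|^2;\s_g)\cj g^l$, one gets
\[
\sum_{k,m\ge 0}s^k\cj t^{\,m}\,\E\Big[L_k^{(l)}(|f|^2;\s_f)f^l\,\cj{L_m^{(l)}(|g|^2;\s_g)g^l}\Big]
= \frac{\E\big[f^l\cj g^l\,e^{-a|f|^2-b|g|^2}\big]}{(1-s\s_f)^{l+1}(1-\cj t\s_g)^{l+1}},
\]
with $a = \tfrac{s}{1-s\s_f}$ and $b = \tfrac{\cj t}{1-\cj t\s_g}$. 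The core step is then to evaluate the Gaussian integral by completing the square: setting $M = \Sigma^{-1} + \mathrm{diag}(a,b)$, one has $\E[f^l\cj g^l e^{-a|f|^2-b|g|^2}] = (\det\Sigma\,\det M)^{-1}\,\E_{M^{-1}}[f^l\cj g^l]$, where $\E_{M^{-1}}$ is the expectation under the centred circularly symmetric complex Gaussian with covariance $M^{-1}$; by Wick's theorem the only surviving pairings are $f$ with $\cj g$, so $\E_{M^{-1}}[f^l\cj g^l] = l!\,\big((M^{-1})_{12}\big)^l$. A direct $2\times 2$ computation gives $\det\Sigma\,\det M = \tfrac{1-s\cj t|\rho|^2}{(1-s\s_f)(1-\cj t\s_g)}$ and $(M^{-1})_{12} = \tfrac{\rho(1-s\s_f)(1-\cj t\s_g)}{1-s\cj t|\rho|^2}$.

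Substituting back, all factors involving $\s_f$ and $\s_g$ cancel and one is left with $\tfrac{l!\,\rho^l}{(1-s\cj t|\rho|^2)^{l+1}}$. Expanding via the binomial series $(1-w)^{-(l+1)} = \sum_{k\ge0}\tfrac{(k+l)!}{k!\,l!}w^k$ with $w = s\cj t|\rho|^2$ gives
\[
\sum_{k,m\ge 0}s^k\cj t^{\,m}\,\E\Big[L_k^{(l)}(|f|^2;\s_f)f^l\,\cj{L_m^{(l)}(|g|^2;\s_g)g^l}\Big]
= \sum_{k\ge 0}\frac{(k+l)!}{k!}\,|\rho|^{2k}\rho^l\,(s\cj t)^k,
\]
and matching the coefficient of $s^k\cj t^{\,m}$ yields exactly $\delta_{km}\tfrac{(k+l)!}{k!}|\E[f\cj g]|^{2k}\E[f\cj g]^l$, as claimed. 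The only genuinely nontrivial point is the bookkeeping in the quadratic-form Gaussian integral — computing $\det M$ and $(M^{-1})_{12}$ and checking that the $\s_f,\s_g$-dependence disappears; the interchange of the infinite sums with the expectation is routine given absolute convergence for small $|s|,|t|$ (all Gaussian moments are finite), and the analytic dependence on $s,t$ lets one work with $s,t$ real and positive before extending.
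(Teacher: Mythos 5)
Your proposal is correct, and I have checked the two computations it hinges on: with $\Delta=\det\Sigma$ one indeed gets $\det\Sigma\,\det M=1+a\s_f+b\s_g+ab\Delta=\frac{1-s\cj t|\rho|^2}{(1-s\s_f)(1-\cj t\s_g)}$ and $(M^{-1})_{12}=\frac{\rho(1-s\s_f)(1-\cj t\s_g)}{1-s\cj t|\rho|^2}$, so the $\s_f,\s_g$-dependence cancels and the binomial expansion of $(1-s\cj t|\rho|^2)^{-(l+1)}$ delivers exactly the stated identity. Note that the paper itself does not prove this lemma but defers to the reference \cite{Trenberth}; the generating-function route you take is the standard one in that literature, so there is no genuine methodological divergence to report. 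Two small points of hygiene. First, a mean-zero complex Gaussian vector is \emph{not} automatically circularly symmetric: one must also know the pseudo-covariances $\E[f^2]=\E[g^2]=\E[fg]=0$ vanish, and the identity genuinely fails otherwise (e.g.\ for $k=m=1$, $l=0$ one finds $\E[(\s_f-|f|^2)(\s_g-|g|^2)]=|\E[f\cj g]|^2+|\E[fg]|^2$ by Isserlis). So properness is an implicit hypothesis of the lemma — satisfied in the paper's application, since $\Psi_N(t,x)$ is built from the circularly symmetric Brownian motions \eqref{BnRI} — rather than a consequence of ``mean-zero complex Gaussian''; you should state it as an assumption rather than derive it. Second, to justify exchanging the double sum with the expectation, ``all Gaussian moments are finite'' is not by itself enough; use the classical bound $|L_k^{(l)}(x)|\le \frac{(k+l)!}{k!\,l!}e^{x/2}$ for $x\ge 0$ together with $\E\,e^{\eps(|f|^2+|g|^2)}<\infty$ for small $\eps>0$, which dominates the double series for $|s|\s_f$, $|t|\s_g$ small and makes the Fubini step rigorous; the identification of coefficients for real $s,t$ in a neighbourhood of $0$ and the continuity argument for the degenerate case $\det\Sigma=0$ are fine as you describe.
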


This latter result will prove particularly useful when combined with the Wiener chaos estimate, which we recall in the next lemma (the proof of this last statement can be found for example in \cite[Theorem I.22]{Simon}).
\begin{lemma}\label{LEM:wiener}
 Let $ g = \{ g_n \}_{n \in \Z}$ be a sequence of standard independent and identically distributed real Gaussian random variables. Let $k \in \N$ and let $ \{ P_j ( g) \}_{j \in \N}$ be a sequence of polynomials of degree at most $k$. Then for $ p \geq 2$,
\begin{equation*}
\big\| \sum_{j \in \N} P_j ( g )  \big\|_{L^p(\O)} \leq (p-1)^{\frac{k}{2}} \big\| \sum_{j \in \N} P_j ( g )  \big\|_{L^2(\O)}.
\end{equation*} 
\end{lemma}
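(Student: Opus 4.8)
The plan is to derive the bound from Nelson's hypercontractivity theorem for the Ornstein--Uhlenbeck semigroup, the present statement being essentially a repackaging of that classical fact adapted to a countable family of summands. Write $F = \sum_{j \in \N} P_j(g)$; we may assume $F \in L^2(\Omega)$, as otherwise the right-hand side is infinite and there is nothing to prove. The first step is to record that the chaos decomposition of $F$ involves only chaoses of order at most $k$. Indeed, each $P_j(g)$ is a finite linear combination of monomials in the $g_n$'s of total degree $\le k$, and expanding such a monomial in Hermite polynomials of the individual $g_n$'s exhibits $P_j(g) \in \bigoplus_{n=0}^{k} \mathcal{H}_n$, where $\mathcal{H}_n$ denotes the $n$-th homogeneous Wiener chaos generated by $\{g_n\}_{n \in \Z}$. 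Since $\bigoplus_{n=0}^{k} \mathcal{H}_n$ is a closed subspace of $L^2(\Omega)$ and the partial sums of $\sum_j P_j(g)$ converge to $F$ in $L^2(\Omega)$, we obtain $F = \sum_{n=0}^{k} F_n$ with $F_n$ the $L^2(\Omega)$-orthogonal projection of $F$ onto $\mathcal{H}_n$.

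Next I would invoke the Ornstein--Uhlenbeck semigroup $\{T_r\}_{r \in [0,1]}$ associated with the Gaussian measure of $\{g_n\}$, which acts diagonally on the chaos decomposition by $T_r\big(\sum_n G_n\big) = \sum_n r^n G_n$, together with Nelson's hypercontractivity estimate: $T_r$ maps $L^2(\Omega)$ into $L^p(\Omega)$ with operator norm at most $1$ whenever $0 \le r \le (p-1)^{-1/2}$. Choosing $r = (p-1)^{-1/2} \in (0,1]$ (possible since $p \ge 2$), the random variable $G \deff \sum_{n=0}^{k} r^{-n} F_n$ is square-integrable and satisfies $T_r G = F$. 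Applying hypercontractivity, then orthogonality of the chaoses together with the bound $r^{-2n} \le r^{-2k}$ valid for $0 \le n \le k$ since $r \le 1$, gives
\begin{align*}
\Big\| \sum_{j \in \N} P_j(g) \Big\|_{L^p(\Omega)}
= \| T_r G \|_{L^p(\Omega)}
&\le \| G \|_{L^2(\Omega)}
= \Big( \sum_{n=0}^{k} r^{-2n} \| F_n \|_{L^2(\Omega)}^2 \Big)^{1/2} \\
&\le r^{-k} \Big( \sum_{n=0}^{k} \| F_n \|_{L^2(\Omega)}^2 \Big)^{1/2}
= (p-1)^{k/2} \Big\| \sum_{j \in \N} P_j(g) \Big\|_{L^2(\Omega)},
\end{align*}
which is the desired inequality (the last equality using $\sum_{n=0}^k \|F_n\|_{L^2(\Omega)}^2 = \|F\|_{L^2(\Omega)}^2$ and $r^{-k} = (p-1)^{k/2}$).

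The only substantive ingredient is Nelson's hypercontractivity estimate, which I would either quote or establish in the usual way: prove the two-point (Bonami--Beckner) inequality on $\{-1,1\}$, tensorize over coordinates, and pass to the Gaussian limit by the central limit theorem to obtain the one-dimensional Gaussian case, then tensorize again; alternatively, differentiate $t \mapsto \|T_{e^{-t}} G\|_{L^{q(t)}(\Omega)}$ along a suitable exponent curve $q(t)$ and invoke the logarithmic Sobolev inequality for the Gaussian measure. The only other point requiring a little care is the truncation to finitely many chaoses in the presence of infinitely many summands $P_j$, handled above by closedness of $\bigoplus_{n \le k}\mathcal{H}_n$ in $L^2(\Omega)$; equivalently, one may first prove the inequality for each partial sum $S_M = \sum_{j \le M} P_j(g)$ — a polynomial in finitely many Gaussians, covered by the finite-dimensional form of hypercontractivity — and then let $M \to \infty$, using $L^2(\Omega)$-convergence $S_M \to F$ on the right and Fatou's lemma on the left. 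Beyond correctly bookkeeping the hypercontractivity exponent $r = (p-1)^{-1/2}$, I do not expect any genuine obstacle; the depth of the result is entirely concentrated in the cited hypercontractivity theorem.
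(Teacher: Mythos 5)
Your argument is correct and is essentially the same as the one behind the reference the paper cites for this lemma (Simon, Theorem I.22): reduce to the chaos decomposition up to order $k$ and apply Nelson's hypercontractivity of the Ornstein--Uhlenbeck semigroup with $r=(p-1)^{-1/2}$. The bookkeeping of the chaos projections, the choice of $r$, and the limiting argument over partial sums are all handled correctly, so nothing further is needed.
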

\subsection{On the Wick powers}
We now move on to the construction and the study of the stochastic objects $\Psi$ and $:\!\Psi_N^{j_1}\cj{\Psi_N}^{j_2}\!:$ defined in \eqref{stochastic convolution} and \eqref{renormalization stochastic objects} respectively.
\begin{proposition}\label{PROP:construction stochastic objects}
Fix $j_1,j_2 \geq 0$ two integers. For any $0 < \e \ll 1$,  $T >0$ and $1 \leq p,q < \infty$, the random variables $\{ :\!\PN^{j_1}\cj{\PN}^{j_2}\!:\}_{N \in \N}$ form a Cauchy sequence in $L^p \left( \O; L^q([0,T]; \Cc^{-\e}(\M)) \right)$ and converge almost surely to a limit $:\!\Psi^{j_1}\cj{\Psi}^{j_2}\!: \,\in L^q([0,T]; \Cc^{-\e}(\M)) $. Moreover, we have the following tail estimate: there exists some constants $c,C>0$ and $0<\dl\ll \eps$ such that for any $T,R>0$ and any\footnote{with the convention that $\,:\!\Psi_N^{j_1}\cj{\Psi_N}^{j_2}\!:\,=\,:\!\Psi^{j_1}\cj{\Psi}^{j_2}\!:\,$ when $N=\infty$.} $1\le N_1\le N_2\le \infty$ it holds
\begin{align}
\Prob \Big(  \big\| \,:\!\Psi_{N_1}^{j_1} \cj{\Psi_{N_1}}^{j_2}\!:\,  -  \,:\!\Psi_{N_2}^{j_1} \cj{\Psi_{N_2}}^{j_2}\!:\,  \big\|_{L^q_T \Cc^{-\e}}   > R \Big) \leq C e^{-c R^{\frac{2}{j_1+j_2}} N_1^{\frac{\dl}{j_1+j_2}} T^{- \frac{2}{q(j_1+j_2)}}\jb{T}^{-1}},
\label{deviation}
\end{align}
 and
\begin{align}
\Prob \Big(  \big\| :\!\PN^{j_1}\cj{\PN}^{j_2}\!:\big\|_{L^q_T \Cc^{-\e}}   > R \Big) \leq C e^{-c R^{\frac{2}{j_1+j_2}} T^{- \frac{2}{q(j_1+j_2)}}\jb{T}^{-1}}.
\label{deviation2}
\end{align}
 At last, we also have $\Psi \in C\left( [0,T]; \Cc^{-\e}(\M) \right)$ almost surely, and
 \begin{align}
 \Prob \Big(  \big\| \PN - \Psi \big\|_{C_T\Cc^{-\eps}}  > R \Big) \leq C e^{-c R^2 N^{\dl}}.
\label{deviation3}
 \end{align}
\end{proposition}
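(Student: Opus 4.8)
The argument follows the standard scheme of \cite{DPD,ORTW}: reduce everything to second moments by Gaussian hypercontractivity and estimate those using the Green's function bounds of Lemma~\ref{LEM:green function 1}. Fix $j_1,j_2$ and abbreviate $X_N \deff \,:\!\Psi_N^{j_1}\cj{\Psi_N}^{j_2}\!:\,$. For each $N$, $t$, $x$, the variable $\Psi_N(t,x)$ is a mean-zero complex Gaussian, and since the noise is circularly symmetric we have the vanishing pseudo-covariance $\E[\Psi_N(t,z)\Psi_N(t,w)]=0$; hence by the very definition \eqref{renormalization stochastic objects} of the Wick powers in terms of generalized Laguerre polynomials, $X_N(t,x)$ is a polynomial of degree $j_1+j_2$ in the i.i.d. real Gaussians $\{B_n^R,B_n^I\}_{n\ge 0}$. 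Since $\Q_M$ acts only on the spatial variable, $\Q_M X_N(t,x)$ is again such a polynomial, so Lemma~\ref{LEM:wiener} bounds its $L^p(\O)$-norm, for any $p\ge 2$, by $(p-1)^{(j_1+j_2)/2}$ times its $L^2(\O)$-norm.

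\textbf{Second-moment estimates.} Writing $\Q_M$ through the kernel $\K_{M^{-1}}$ of Lemma~\ref{LEM:PM}, I compute
\begin{align*}
&\E\big[|\Q_M X_N(t,x)|^2\big] \\
&\qquad = \iint_{\M\times\M} \K_{M^{-1}}(x,z)\,\cj{\K_{M^{-1}}(x,w)}\;\E\big[X_N(t,z)\,\cj{X_N(t,w)}\big]\,dV(z)\,dV(w),
\end{align*}
and by the covariance identity for Laguerre polynomials of jointly Gaussian variables, Lemma~\ref{LEM:covariance prop}, the inner expectation equals a dimensional constant times $|c_N(t;z,w)|^{2\min(j_1,j_2)}\,c_N(t;z,w)^{|j_1-j_2|}$, where $c_N(t;z,w)=\E[\Psi_N(t,z)\cj{\Psi_N(t,w)}]$ is, up to the explicit zero-mode term which is $O(\jb{t})$, a $\psi_0$-regularization of $\tfrac{\gamma}{\alpha_1}G_A(z,w)$. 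By the logarithmic bound \eqref{GN1} one has $|c_N(t;z,w)|\les |\log(\dg(z,w)+N^{-1})|+\jb{t}$; inserting this, using that $\K_{M^{-1}}$ concentrates $z,w$ at scale $M^{-1}$ together with $\int_0^{M^{-1}}|\log r|^{j_1+j_2}\,r\,dr\les M^{-2}(\log\jb{M})^{j_1+j_2}$, and noting that $\Q_M X_N=0$ for $M\gg(j_1+j_2)N$, I obtain, for every $\dl>0$, the bound $\E[|\Q_M X_N(t,x)|^2]\les\jb{M}^{\dl}\jb{t}^{j_1+j_2}$ uniformly in $N\ge 1$ and $x\in\M$. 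For the differences, one telescopes the analogous expression for $\E[(X_{N_1}-X_{N_2})(t,z)\cj{(X_{N_1}-X_{N_2})(t,w)}]$ into terms each carrying one factor $c_{N_i}-c_{N_j}$, to which Lemma~\ref{LEM:green function 1}(ii) applies; interpolating in \eqref{GN2} between its two bounds produces a gain $\min(N_1,N_2)^{-\dl}=N_1^{-\dl}$ at the cost of a harmless extra $\dg(z,w)^{-\dl}$, still integrable against the kernels, whence $\E[|\Q_M(X_{N_1}-X_{N_2})(t,x)|^2]\les N_1^{-\dl}\jb{M}^{\dl}\jb{t}^{j_1+j_2}$.

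\textbf{Conclusion.} Combining the two displays with hypercontractivity, $\E[\|\Q_M f(t)\|_{L^p(\M)}^p]=\int_\M\E[|\Q_M f(t,x)|^p]\,dV$, then summing the $\jb{M}^{-\e p/2}$-weighted contributions over dyadic $M$ (convergent since $\dl\ll\e$) and invoking the embedding $\Bb^{-\e/2}_{p,p}(\M)\hookrightarrow\Cc^{-\e}(\M)$ from Lemma~\ref{LEM:Besov}(iii), valid once $2/p<\e/2$, gives
\begin{align*}
\big\|X_{N_1}-X_{N_2}\big\|_{L^p(\O;L^q_T\Cc^{-\e})}\les p^{\frac{j_1+j_2}{2}}\,N_1^{-\frac{\dl}{2}}\,T^{\frac1q}\jb{T}^{\frac{j_1+j_2}{2}}\quad\text{for all }p\ \text{large},
\end{align*}
and the same without the $N_1$-factor for $X_N$ itself, where one uses $\int_0^T\jb{t}^{(j_1+j_2)q/2}\,dt\les T\jb{T}^{(j_1+j_2)q/2}$ and Minkowski's inequality. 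This yields the Cauchy property in $L^p(\O;L^q_T\Cc^{-\e})$, hence the limit $\,:\!\Psi^{j_1}\cj{\Psi}^{j_2}\!:$; Chebyshev's inequality and optimization over $p$ (the exponent $\tfrac{j_1+j_2}{2}$ on $p$ becoming $\tfrac{2}{j_1+j_2}$ on $R$, and $\jb{T}^{(j_1+j_2)/2}$ becoming $\jb{T}^{-1}$) produce exactly \eqref{deviation} and \eqref{deviation2}; and \eqref{deviation} with $N_2=\infty$ (by Fatou) together with Borel--Cantelli upgrades convergence in probability to almost sure convergence of the full sequence. For $\Psi$, which is of order $1$, the same scheme applies with the extra input of a time-H\"older modulus: splitting $\Psi(t)-\Psi(s)=(\Id-S_{A,\alpha}(t-s))\Psi(s)+\sqrt{2\gamma}\int_s^tS_{A,\alpha}(t-t')\P_{\ge\lambda_+}\,dW(t')+(\text{zero mode})$, bounding the first term by Lemma~\ref{LEM:semigroup}(ii) and the second by It\^o isometry together with $1-e^{-2(t-s)\alpha_1\lambda_n^2}\les((t-s)\lambda_n^2)^\theta$ and Corollary~\ref{COR:avg eig}, gives $\E[\|\Psi(t)-\Psi(s)\|_{\Cc^{-\e}}^p]\les|t-s|^{\theta p/2}$ for $\theta$ small; Kolmogorov's continuity criterion in $\Cc^{-\e}(\M)$ then yields $\Psi\in C([0,T];\Cc^{-\e})$ a.s., and the first-chaos tail applied to $\E[\|\Psi_N-\Psi\|_{C_T\Cc^{-\e}}^p]\les N^{-\dl p/2}$ (the $N$-gain again coming from \eqref{GN2}) gives \eqref{deviation3}.

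\textbf{Main difficulty.} The crux is the second-moment step: one must expand the Wick powers correctly through Lemmas~\ref{LEM:covariance prop} and~\ref{LEM:sum}, keep track of all the Laguerre cross-terms arising in the telescoping of the differences, and then estimate the spatial integrals $\iint_{\M\times\M}|\K_{M^{-1}}(x,z)||\K_{M^{-1}}(x,w)|\,|G_A^{\mathrm{reg}}(z,w)|^{k}\,dV(z)\,dV(w)$ so that the logarithmic coincidence singularity of $G_A$ costs only $\jb{M}^\dl$ with $\dl$ as small as desired, uniformly over the finitely many cross-terms and over $1\le N_1\le N_2\le\infty$, while simultaneously squeezing the quantitative gain $N_1^\dl$ out of \eqref{GN2}. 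Once this is in place, hypercontractivity, the Besov embedding and Chebyshev's inequality are routine.
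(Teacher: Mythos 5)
Your proposal is correct and follows essentially the same route as the paper's proof: reduction to second moments via the Wiener chaos estimate (Lemma~\ref{LEM:wiener}), the Laguerre covariance identity of Lemma~\ref{LEM:covariance prop}, the Green's function bounds \eqref{GN1}--\eqref{GN2} for the uniform and difference estimates (with the $N_1^{-\dl}$ gain extracted exactly as in the paper), the Besov embedding, Chebyshev plus optimization in $p$, Borel--Cantelli, and a Kolmogorov-type argument for the continuity of $\Psi$ (the paper computes the time increments directly spectrally, while you use the splitting $(\Id-S_{A,\alpha}(t-s))\Psi(s)$ plus the stochastic integral over $[s,t]$, which is equivalent; for \eqref{deviation3} the paper, like you, is brief and invokes a maximal/martingale estimate to pass from fixed-time moments to the $C_T$ norm). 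The only inaccuracy is your aside that $\Q_M\big(:\!\Psi_N^{j_1}\cj{\Psi_N}^{j_2}\!:\big)=0$ for $M\gg (j_1+j_2)N$ --- exact spectral localization of products fails on a general manifold --- but this remark is not load-bearing, since your bound $\jb{M}^{\dl}\jb{t}^{j_1+j_2}$ is uniform in $N$ in every regime (the logarithmic factor is always controlled by $\log\jb{M}$), exactly as in the paper's estimate.
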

\begin{proof}Fix $j_1,j_2 \geq 0$, $0 < \e \ll 1$ and $T >0$. By H\"older's inequality, it suffices to prove that $\{ :\!\PN^{j_1}\cj{\PN}^{j_2}\!:\}_{N \in \N}$ forms a Cauchy sequence in $L^{p} \left( \O; L^q([0,T]; \Cc^{-\e}(\M)) \right)$ when $p\ge q\gg 1$. 

\medskip

\noi
{\it \underline{$\bul$ Step 1: uniform boundedness.}} We start by showing that $\displaystyle{\{\,:\!\PN^{j_1}\cj{\PN}^{j_2}\!:\,\}_{N\ge 1}}$ is uniformly bounded in $L^{p} \left( \O; L^q([0,T]; \Cc^{-\e}(\M)) \right) $. First, a straightforward computation using Ito's isometry and the definition \eqref{stochastic convolution} of the stochastic convolution shows that for any $N_1,N_2\ge 1$, any $y,z \in \M$ and $t\in[0;T]$, the covariance function is given by
\begin{align}
\begin{split}\label{OBJ1}
\mathfrak G_{N_1,N_2}(t,y,z)&\deff\E[ \Psi_{N_1}(t,y) \cj{\Psi_{N_2}} (t,z) ] \\
&= \frac{\g}{\alpha_1}\big( \psi(- N_1^2 \Dl) \otimes \psi( -N_2^2 \Dl) \big) G_A(y,z)+2\mathbf{1}_{\ld_0=0}\gamma t\varphi_0(y)\cj{\varphi_0(z)},
\end{split}
\end{align}
where the Green function $G_A$ has been defined in \eqref{Green} above. Then, by a standard combination (see e.g. \cite{GKO1,OT2,ORT}) of Sobolev inequality (Lemma~\ref{LEM:Besov} (iii)), Minkowski's inequality, and the Wiener chaos estimate  (Lemma~\ref{LEM:wiener}), along with the definition~\eqref{Besov norm} of Besov spaces, we have the following estimates for some $r_{\e}\gg 1$ and $ p \geq \max(q, r_{\e}) $:
\begin{align}
\begin{split}\label{OBJ2}
\big\| :\!\PN^{j_1}\cj{\PN}^{j_2}\!:\big\|_{L^{p}_{\O} L^q_T \Cc^{-\e}} & \les  \big\| :\!\PN^{j_1}\cj{\PN}^{j_2}\!:  \big\|_{L^{p}_{\O}L^{q}_{T}  B^{-\frac{\e}{2}}_{r_{\e}, r_{\e}}} \\ 
& \les \big\| \jb{M}^{-\frac{\e}{2}} \Q_M \big[:\!\PN^{j_1}\cj{\PN}^{j_2}\!:\big]\big\|_{L^{q}_{T} \l_M ^{ r_{\e} } L_x^{r_{\e}}L^{p}_{\O}}   \\
& \les p^{\frac{j_1+j_2}2} \big\|\jb{M}^{-\frac{\e}{2}} \Q_M \big[:\!\PN^{j_1}\cj{\PN}^{j_2}\!:\big]\big\|_{L^{q}_{T}  L_x^{r_{\e}} \l_M ^{ r_{\e} } L^{2}_{\O}}.
\end{split}
\end{align}
To estimate the inner $L^2(\O)$ norm, noting $\K_M$ the kernel of $\Q_M$ as in Lemma~\ref{LEM:PM}, we use Lemma \ref{LEM:covariance prop} with the definition \eqref{renormalization stochastic objects} of the renormalized stochastic process $\displaystyle{\{\,:\!\PN^{j_1}\cj{\PN}^{j_2}\!:\,\}_{N\ge 1}}$ , the expression \eqref{OBJ1} for the covariance function of $\Psi_N$ along with the pointwise bounds \eqref{K2} and \eqref{GN1} on $\K_M$ and on the regularized Green's function to estimate for fixed $M \in 2^{\Z_{-1}}$, $x\in\M$ and $t\in[0;T]$:
\begin{align*}
& \E \left[ M^{-\frac{\e}4} | \Q_M :\PN^{j_1} \cj{\PN}^{j_2}:(t,x) |^2 \right]  \\
& = M ^{-\frac{\e}4}  \int_\M\int_\M \mathcal{K}_M (x,y) \mathcal{K}_M(x,z) \E\bigg[:\!\PN^{j_1} \cj{\PN}^{j_2}\!:\,(t,y) \cj{\,:\!\PN^{j_1} \cj{\PN}^{j_2}\!:\,(t,z)}\bigg]  dV(y) dV(z) \\
& =  C  M ^{-\frac{\e}4} \int_\M\int_\M \mathcal{K}_M (x,y) \mathcal{K}_M(x,z) \ \E\Big[ \PN(t,y) \cj{  \PN (t,z) } \Big]^{j_1} \  \cj{\E\Big[ \PN(t,y) \cj{  \PN (t,z) } \Big]}^{j_2} dV(y) dV(z) \\
& \les_L M ^{-\frac{\e}4} \int_\M\int_\M  M^2 \jb{ M \dg(x,y)  }^{-L} \ M^2 \jb{ M \dg(x,z)  }^{-L}\\
&\qquad\qquad\qquad\quad\times \big[\  \langle\log(\dg(y,z)+N^{-1})\rangle^{j_1+j_2}+T^{j_1+j_2}\big] dV(y) dV(z)\intertext{for some constant $C=C(j_1,j_2)$ and for arbitrary $L \gg 1$. Using the inequalities $ | \log(r) | \les_\al r^{-\delta}$ for $0<r \les 1$ and $M^2 \jb{ M \dg(x,y)  }^{-L}\les_\be M^{2\dl} \dg(x,y)^{2\dl - 2} $ for arbitrary $0<\delta\ll\eps$, we can continue with}
& \les M ^{-\frac{\e}4+4\dl} \int_\M\int_\M  \dg(x,y)^{2\dl-2}  \dg(x,z)^{2\dl-2} \big[\dg(y,z)^{-\dl} +T^{j_1+j_2}\big]dV(y) dV(z)\\
&\les M^{- \frac{\e}{4}+4\dl}\jb{T}^{j_1+j_2},
\end{align*}
uniformly in $M\in 2^{\Z_{\ge -1}}$, $x\in\M$ and $t\in[0;T]$, where the last step follows from classical convolution type inequalities; see \cite[Proposition 4.12]{Aubin}. Plugging this estimate into \eqref{OBJ2} and using H\"older's inequality with the compactness of $\M$, and choosing $0<\dl<\frac\eps4$, we get
\begin{align*}
\big\| :\!\PN^{j_1}\cj{\PN}^{j_2}\!:\big\|_{L^{p}_{\O} L^q_T \Cc^{-\e}} \les p^{\frac{j_1+j_2}2}T^{\frac1q}\jb{T}^{\frac{j_1+j_2}{2}},
\end{align*}
uniformly in $N\ge 1$.

\medskip

\noi
{\it \underline{$\bul$ Step 2: difference estimate.}} Let us now show that ${\displaystyle  \{ :\!\PN^{j_1}\cj{\PN}^{j_2}\!:\}_{N \in \N}}$ forms a Cauchy sequence in $L^{p} \left( \O; L^q([0,T]; C^{-\e, \infty}(\M)) \right)$. For the sake of concreteness, let us assume that $j_1\ge j_2$, the case $j_1<j_2$ being completely analogous. Using the definition \eqref{renormalization stochastic objects} of the renormalized process along with the algebraic property of the renormalization given in Lemma~\ref{LEM:covariance prop} and the expression \eqref{OBJ1} for the covariance function, we get for any $y,z\in\M$, any $t\in[0;T]$ and any $N_2\ge N_1\ge 1$:
\begin{align*}
& \E \left[  \left( :\!\Psi_{N_1}^{j_1} \cj{\Psi_{N_1}}^{j_2}\!:(t,y) \ - :\!\Psi_{N_2}^{j_1} \cj{\Psi_{N_2}}^{j_2}\!:(t,y)  \right) \cj{ \left( :\!\Psi_{N_1}^{j_1} \cj{\Psi_{N_1}}^{j_2}\!:(t,z) \ - :\!\Psi_{N_2}^{j_1} \cj{\Psi_{N_2}}^{j_2}\!:(t,z) \right) }     \right] \\
&\qquad = C\big|\mathfrak G_{N_1,N_1}(t,y,z)\big|^{2j_2} \mathfrak G_{N_1,N_1}(t,y,z)^{j_1-j_2} - C\big|\mathfrak G_{N_1,N_2}(t,y,z)\big|^{2j_2}\mathfrak G_{N_1,N_2}(t,y,z)^{j_1-j_2} \\
&\qquad\qquad- C\big|\mathfrak G_{N_2,N_1}(t,y,z)\big|^{2j_2}\mathfrak G_{N_2,N_1}(t,y,z)^{j_1-j_2} +C\big|\mathfrak G_{N_2,N_2}(t,y,z)\big|^{2j_2}\mathfrak G_{N_2,N_2}(t,y,z)^{j_1-j_2}\\
 &\qquad= \1 - \II-\III+\IV,
\end{align*}
for some constant $C=C(j_1,j_2)$. Using the mean value inequality with similar estimates as above, we can bound for $0 < \dl \ll \e $:
\begin{align*}
\big| \1 - \II\big| & \les\big|\mathfrak G_{N_1,N_1}(t,y,z)-\mathfrak G_{N_1,N_2}(t,y,z)\big|\big[ \dg(y,z)^{-\dl}+\jb T^{j_1 + j_2}\big].\intertext{Using then \eqref{GN2} with the definition \eqref{OBJ1} of $G_{N_1,N_2}$, we can continue with}
& \les\min\Big\{-\log\big(\dg(x,y)+N_1^{-1}\big)\vee 1; N_1^{\dl-1}\dg(x,y)^{-1}\Big\}\big[\dg(y,z)^{-\dl}+\jb T^{j_1 + j_2}\big] \\
& \les  \min\Big\{ \dg(x,y)^{- \dl};  N_1^{\dl-1}\dg(x,y)^{-1} \Big\}\big[\dg(y,z)^{-\dl}+\jb T^{j_1 + j_2}\big] \\
& \les N_1^{-\dl} \dg(x,y)^{-2\dl}\big[\dg(y,z)^{-\dl}+\jb T^{j_1 + j_2}\big].
\end{align*}
Similar considerations lead to
\[ \big| \III - \IV \big| \les N_1^{-\dl} \dg(x,y)^{-2\dl}\big[\dg(y,z)^{-\dl}+\jb T^{j_1 + j_2}\big]. \]
Thus, arguing as before we can estimate 
\[ \E \Big[ M^{-\e} \Big| \Q_M \big(:\Psi_{N_1}^{j_1} \cj{\Psi_{N_1}}^{j_2}:(t,x) \ - : \Psi_{N_2}^{j_1} \cj{ \Psi_{N_2} }^{j_2}:(t,x)\big)  \Big|^2 \Big] \les N_1^{-\dl}\jb T^{j_1 + j_2}, \]
uniformly in $t\in[0;T] $ and $x\in\M$, from which we obtain
\begin{align}\label{obj3}
\big\| :\!\Psi_{N_1}^{j_1} \cj{\Psi_{N_1}}^{j_2}\!: - :\!\Psi_{N_2}^{j_1} \cj{ \Psi_{N_2} }^{j_2}\!: \big\|_{L^{p}_{\O} L^q_T \Cc^{-\e}} \les p^{\frac{j_1+j_2}2} T^{\frac{1}{q}}\jb{T}^{\frac{j_1+j_2}2}  N_1^{-\frac{\dl}2}
\end{align}
for some small $0< \dl \ll \eps$ and $p \gg 1$. This proves that ${\displaystyle \{:\!\PN^{j_1} \cj{\PN}^{j_2}\!:\}_{N \in \N}}$ forms a Cauchy sequence in $L^{p} \big( \O; L^q([0,T]; \Cc^{-\e}(\M)) \big)$.

\medskip

\noi
{\it \underline{$\bul$ Step 3: tail estimates.}} Next, let us discuss the deviation bounds \eqref{deviation} and \eqref{deviation2}. Since \eqref{obj3} holds for any finite $p\ge 1$ by H\"older's inequality, we can use Chebychev's inequality so that for any $R>0$ and $N_2 \ge N_1$, we can bound
\begin{align}\label{obj4}
 \Prob \Big(  \big\| :\!\Psi_{N_1}^{j_1} \cj{\Psi_{N_1}}^{j_2}\!:  -  :\!\Psi_{N_2}^{j_1}\cj{\Psi_{N_2}}^{j_2}\!:\big\|_{L^q_T \Cc^{-\e}}   > R \Big) \le C R^{-p}p^{p\frac{j_1+j_2}2} T^{\frac{p}{q}}\jb{T}^{p\frac{j_1+j_2}{2}} N_1^{- \frac{\dl}2 p}.
\end{align} 
 Thus, optimizing \eqref{obj4} in $p$ yields \eqref{deviation}. The deviation bound \eqref{deviation2} is obtained by the same argument. Besides, by \eqref{deviation} and since the $L^p$-convergence implies the convergence in probability, we have that
\[ \Prob \Big(  \big\| :\!\Psi^{j_1}\cj{\Psi}^{j_2}\!: -  :\!\PN^{j_1}\cj{\PN}^{j_2}\!:\big\|_{L^q_T \Cc^{-\e}}   > R \Big) \leq C e^{-c R^{\frac{2}{j_1+j_2}} N^{\frac{\dl}{j_1+j_2}} T^{- \frac{2}{q(j_1+j_2)}}}, \]
for any $N \geq 1 $. Hence, by the Borel-Cantelli's Lemma, $ {\displaystyle :\!\PN^{j_1} \cj{\Psi_N}^{j_2}\!:} $ converges almost surely to $  :\!\Psi^{j_1}\cj{\Psi}^{j_2}\!:$ as $N$ goes to infinity. 

\medskip

At last, we show that $\Psi$ is continuous in time and prove \eqref{deviation3}. Fix $t\geq 0$, $h \in [-1,1]$, $x \in \M$ and $M \in 2^{\Z _{-1}}$. By independence of the increments of the Brownian motions and Ito's isometry, we have for $M\ge 1$
\begin{align*}
& \E \Big[ M^{-\e}  \big| \Q_M \big( \Psi(t+h,x) - \Psi(t,x) \big) \big|^2  \Big] \\
& \les M^{- \e} \sum_{\ld_n^2 \sim M^2} \frac{|\alpha_1+i\alpha_2|~| \varphi_n(x)|^2}{\ld_n^2} \big|  1-e^{- h(\alpha_1 + i\alpha_2) \ld_n^2}\big|. \intertext{Using the mean value inequality (recall that $\alpha_1>0$) with Corollary \ref{COR:avg eig}, the previous term can be bounded for some small $0 < \dl \ll \e$ by}
& \les M^{-\e} \sum_{\ld_n \sim M} \frac{ |\varphi_n(x)|^2}{\ld_n^{2} }   |h|^{\dl} \ld_n^{2 \dl} \les M^{-\e}|h|^{\dl} \sum_{\ld_n \sim M} \ld_n^{2\dl-2}  \\
& \les M^{2\dl-\frac\eps2} |h|^{\dl} \sum_{\ld_n \sim M} \ld_n^{-2-\frac\eps2} \les M^{2\dl-\frac\eps2}|h|^{\dl},
\end{align*}
uniformly in $t\in [0;T]$, $h\in[-t,T-t]$, $x\in\M$, and $M\in 2^{\Z_{\ge -1}}$, and where in the last step we used Weyl's law \eqref{asympt ldn}. We also have
\begin{align*}
\E \Big[ \big| \Q_0 \big( \Psi(t+h,x) - \Psi(t,x) \big) \big|^2  \Big]\les |h|.
\end{align*} 
 Arguing as before, this leads to
\begin{align*}
\big\|  \Psi(t+h) - \Psi(t)  \big\|_{L^p_{\O}\Cc^{-\e}}^p \les |h|^{\frac{p\dl}{2}}.
\end{align*}
Therefore choosing $p$ large enough so that $\frac{p\dl}{2} > 1$, we can apply Kolmogorov's continuity criterion (see e.g. \cite[Theorem 3.3]{DPZ}), which shows that $\Psi$ is almost surely continuous in time. The tail estimate \eqref{deviation3} then follows from a standard martingale estimate (see e.g. \cite[Theorem 3.9]{DPZ}) along with the same argument as above:
\begin{align*}
 \Prob \Big(  \big\| \PN - \Psi \big\|_{C_T\Cc^{-\eps}}  > R \Big) &\le R^{-p}\sup_{t\in[0,T]}\E\big\|\PN(t) - \Psi(t) \big\|_{\Cc^{-\eps}}^p\\
 &\les p^{\frac{p}2}R^{-p}\sup_{t\in[0,T]}\E\big\|\PN(t) - \Psi(t) \big\|_{\Cc^{-\eps}}^2 \les p^{\frac{p}2}R^{-p}N^{-\frac{\dl}2}.
\end{align*}
This concludes the proof of Proposition~\ref{PROP:construction stochastic objects}.
\end{proof}

\section{Well-posedness theory}\label{SEC : well-posedness }
	In this section, we study (the renormalized version of) the dynamical problem \eqref{SCGL 1} and establish Theorems~\ref{THM : LWP theory 1} and~\ref{THM : deterministic GWP}.
\subsection{Local well-posedness}
We begin by deriving a priori bounds on the solutions to the truncated renormalized equation \eqref{renormalized truncated SCGL 1}. Note that for any fixed $N\ge 1$, there is indeed a unique global solution $u_N$ to \eqref{renormalized truncated SCGL 1}, e.g. by using a standard energy method. The issue is then to derive suitable bounds to prove its convergence. Recall that we look for a solution $u_N$ to \eqref{renormalized truncated SCGL 1} under the form $u_N = v_N + \PN$, with $\PN $ being the stochastic convolution as in \eqref{truncated stochastic convolution}. Then, in view of the definition \eqref{renormalization nonlinearity} of the renormalized nonlinearity and of Lemma~\ref{LEM:sum}, the nonlinear remainder $v_N$ solves
\begin{align}
\begin{split}
 \dt v_N &= (\alpha_1 + i\alpha_2) \Dla v_N \\
 &\qquad- (\beta_1 + i \beta_2) \sum_{\substack{ 0 \leq j_1 \leq m \\  0 \leq j_2 \leq m-1}} \binom{m}{j_1} \binom{m-1}{j_2} v_N^{j_1} \cj{v_N}^{j_2} : \PN^{m-j_1} \cj{\PN}^{m-1-j_2}:
 \end{split}
 \label{vN}
\end{align}
with initial data $v_{N|t=0}= \P_N u_0 - \PN(0)$. From Proposition \ref{PROP:construction stochastic objects} we know that the processes ${\displaystyle  :\!\PN^{j_1}\cj{\PN}^{j_2}\!:}$ converge almost surely in $L^q([0,T]; \Cc^{s_0}(\M)) $ as $N\to\infty$, for any $T>0$, $q\ge 1$ and $s_0<0$.
This discussion leads to study the model equation 
\begin{equation}\label{model equation}
\begin{cases}
\dt v = (\alpha_1 + i\alpha_2) \Dla v - F(v, \Xi) \\
v_{|t=0}= v_0 \in \Cc^{s_0}(\M),	
\end{cases}
\end{equation}
where $\Xi=\{\Xi_{j_1,j_2}\}_{0\le j_1\le m, 0\le j_2 \le m-1}$ (with $\Xi_{m,m-1} =1$) is a collection of space-time distributions in $L^q([0,T]; \Cc^{s_0}(\M)) $, and the nonlinearity is given by
\begin{align}\label{def F}
F(v, \Xi) = (\beta_1 + i \beta_2) \sum_{\substack{ 0 \leq j_1 \leq m \\  0 \leq j_2 \leq m-1}} \binom{m}{j_1} \binom{m-1}{j_2} v^{j_1} \cj{v}^{j_2}  \Xi_{m-j_1,m-j_2-1}.
\end{align}

For $T \in (0,\infty)$, $\e > 0$ and $q \geq 1$, we first define the space $\mathcal Z^{\eps,q}([0,T])$ by
\begin{align}
\mathcal Z^{\eps,q}([0,T]) = \big(L^q ( [0,T]; \Cc^{-\eps}(\M)\big)^{m \times (m-1)}
\label{XXX}
\end{align}
and set
\begin{align}  \| \Xi \|_{\mathcal Z_T^{\eps,q}} := \underset{\substack{0\le j_1\le m\\0\le j_2 \le m-1}}{\max} \ \| z_{j_1,j_2} \|_{L^q_T \Cc^{-\eps}}.
\label{semi}
\end{align}
Similarly, for $T = \infty$, we define the space $\mathcal Z^{\eps,q}(\R_+)$ as the topological vector space 
\begin{align*}
\mathcal Z^{\eps,q}(\R_+) =  \big(L_{\text{loc}}^q (\R_+; \Cc^{-\eps}(\M)\big)^{m \times (m-1)}
\end{align*}
whose topology is induced by the family of seminorms \eqref{semi}.

We then have the following well-posedness result for the Cauchy problem for \eqref{model equation}.
\begin{proposition} \label{PROP:LWP1}Let $\alpha_1, \beta_1>0$ and $\alpha_2,\beta_2\in\R$. Let $m \geq 2$ and $s_0 < 0$ be such that 
\begin{equation*}
-\frac{2}{2m-1}<s_0<0
\end{equation*}
and let $0 < \eps \ll 1 \ll q$. Then, the equation \eqref{model equation} is locally well-posed in $\Cc^{s_0}(\M) \times \mathcal Z^{\eps,q}([0,1])$. More precisely, 
given an enhanced data set: 
\begin{align*}
 (v_0, \Xi) \in \Cc^{s_0}(\M) \times \mathcal{Z}^{\eps,q}([0,1]).
\end{align*}
with $\Xi = \{\Xi_{j_1,j_2}\}_{0\le j_1\le m,0\le j_2 \le m-1}$ \textup{(}with $\Xi_{m,m-1} = 1$\textup{)}, there exist a time $T = T(\|v_0\|_{\Cc^{s_0}} , \|\Xi\|_{\mathcal{Z}^{\eps,q}}) \in (0, 1]$\footnote{Here, we use the shorthand notation $\mathcal Z^{\eps,q}$ for $\mathcal Z^{\eps,q}([0,1])$}
and a unique solution $v$ to \eqref{model equation} in the class
\begin{align}
C(  [0,T]; \Cc^{s_0}(\M)) \cap  C(  (0,T]; \Cc^{2\e }(\M)).
\label{Z1}
\end{align}
In particular, the uniqueness of $v$ 
holds in the entire class \eqref{Z1}.
Furthermore, the solution map 

\noi
\begin{align}
(v_0,\Xi) \in \Cc^{s_0}(\M) \times \mathcal{Z}^{\eps,q}([0,1])
\mapsto 
v \in C(  [0,T]; \Cc^{s_0}(\M)) \cap  C(  (0,T]; \Cc^{2\e }(\M))
\label{solmap1}
\end{align}

\noi
is locally Lipschitz continuous.
%
\end{proposition}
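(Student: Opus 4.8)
The plan is to solve the Cauchy problem \eqref{model equation} by a contraction mapping argument on its Duhamel (mild) formulation
\begin{equation*}
v = \Gamma v, \qquad \Gamma v(t) \deff S_{A,\alpha}(t)v_0 - \int_0^t S_{A,\alpha}(t-t')\,F\big(v(t'),\Xi(t')\big)\,dt',
\end{equation*}
carried out in a space with a time weight at $t=0$ that absorbs the negative regularity of $v_0$. Setting $\theta \deff \frac{2\e - s_0}{2}$ — which lies in $(0,1)$ because $-s_0 < \frac{2}{2m-1}$ and $0 < \e \ll 1$ — I would work on a ball of radius $\sim \|v_0\|_{\Cc^{s_0}} + 1$ in the space $X_T$ of maps $v$ with
\begin{equation*}
\|v\|_{X_T} \deff \sup_{t\in[0,T]}\|v(t)\|_{\Cc^{s_0}} + \sup_{t\in(0,T]} t^{\theta}\|v(t)\|_{\Cc^{2\e}} < \infty,
\end{equation*}
that are continuous on $[0,T]$ into $\Cc^{s_0}(\M)$, continuous on $(0,T]$ into $\Cc^{2\e}(\M)$, and satisfy $t^\theta\|v(t)\|_{\Cc^{2\e}}\to 0$ as $t\to 0^+$. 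The linear contribution is handled directly by Lemma~\ref{LEM:semigroup}: part (i) gives $\|S_{A,\alpha}(t)v_0\|_{\Cc^{2\e}}\les t^{-\theta}\|v_0\|_{\Cc^{s_0}}$ and $\|S_{A,\alpha}(t)v_0\|_{\Cc^{s_0}}\les\|v_0\|_{\Cc^{s_0}}$, part (iii) gives continuity into $\Cc^{s_0}$, and the vanishing $t^\theta\|S_{A,\alpha}(t)v_0\|_{\Cc^{2\e}}\to 0$ follows by approximating $v_0$ in $\Cc^{s_0}(\M)$ by smooth functions (Definition~\ref{def}) and using boundedness of $S_{A,\alpha}(t)$ on $\Cc^{2\e}(\M)$ for the smooth part.

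The heart of the argument is the estimate of the Duhamel term. Each summand of $F(v,\Xi)$ has the form $c\,v^{j_1}\cj{v}^{j_2}\,\Xi_{k_1,k_2}$ with total degree $d \deff j_1+j_2 \in \{0,\dots,2m-1\}$, and all the components $\Xi_{k_1,k_2}$ lie in $L^q_T\Cc^{-\e}(\M)$. Iterating Lemma~\ref{LEM:Besov}(ii) together with $\Cc^{2\e}\embeds L^\infty$ yields $\|v^{j_1}\cj{v}^{j_2}(t')\|_{\Cc^{2\e}}\les\|v(t')\|_{\Cc^{2\e}}^{d}$, and Lemma~\ref{LEM:Besov}(i) with $s_1 = -\e$, $s_2 = 2\e$ — admissible since $s_1+s_2=\e>0$ — gives $\|v^{j_1}\cj{v}^{j_2}\Xi_{k_1,k_2}(t')\|_{\Cc^{-\e}}\les\|v(t')\|_{\Cc^{2\e}}^{d}\|\Xi_{k_1,k_2}(t')\|_{\Cc^{-\e}}$. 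Inserting $\|v(t')\|_{\Cc^{2\e}}\le (t')^{-\theta}\|v\|_{X_T}$, one gets $\|F(v,\Xi)(t')\|_{\Cc^{-\e}}\les \big(\max_{k_1,k_2}\|\Xi_{k_1,k_2}(t')\|_{\Cc^{-\e}}\big)\sum_{d=0}^{2m-1}(t')^{-\theta d}\|v\|_{X_T}^{d}$. For the $\Cc^{2\e}$-component of $\Gamma v$ one then applies Lemma~\ref{LEM:semigroup}(i) to the propagator — with a loss $(t-t')^{-3\e/2}$ from $\Cc^{-\e}$ to $\Cc^{2\e}$ — followed by Hölder in $t'$ to place $\|\Xi(t')\|_{\Cc^{-\e}}$ in $L^q_{t'}$ and the Beta-function identity $\int_0^t (t-t')^{-a}(t')^{-b}\,dt' \sim t^{\,1-a-b}$ (for $a,b<1$); for the $\Cc^{s_0}$-component there is no loss from the propagator once $\e < -s_0$, since then $\Cc^{-\e}\embeds\Cc^{s_0}$. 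Collecting the powers of $t$, this produces
\begin{equation*}
\|\Gamma v\|_{X_T} \les \|v_0\|_{\Cc^{s_0}} + T^{\kappa}\big(1+\|v\|_{X_T}\big)^{2m-1}\|\Xi\|_{\mathcal Z^{\e,q}_T}
\end{equation*}
for some $\kappa = \kappa(m,s_0,\e,q)>0$, the decisive points being that the worst time integral, with exponent $\theta d\,q'$ near $t'=0$, converges because $\theta(2m-1)<1$ by the hypothesis on $s_0$ and $q'\to 1$ as $q\to\infty$, and that the surviving exponent of $t$ is positive because $\tfrac{1}{q'}-\tfrac{3\e}{2}>\theta(2m-2)$ for $q$ large and $\e$ small.

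The difference estimate is obtained in the same way after writing $v^{j_1}\cj{v}^{j_2}-w^{j_1}\cj{w}^{j_2}$ as a sum of terms each carrying one factor $v-w$ or $\overline{v-w}$ and lower powers of $v,w$; it reads $\|\Gamma v-\Gamma w\|_{X_T}\les T^{\kappa}\big(1+\|v\|_{X_T}+\|w\|_{X_T}\big)^{2m-2}\|\Xi\|_{\mathcal Z^{\e,q}_T}\|v-w\|_{X_T}$. Choosing $T = T(\|v_0\|_{\Cc^{s_0}},\|\Xi\|_{\mathcal Z^{\e,q}})\in(0,1]$ small makes $\Gamma$ a contraction on the ball above, hence a unique fixed point $v$ there; its membership in the class \eqref{Z1} — including continuity up to $t=0$ into $\Cc^{s_0}$ and on $(0,T]$ into $\Cc^{2\e}$ — follows from the linear part together with a standard dominated-convergence argument for the Duhamel integral. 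Unconditional uniqueness in the full class \eqref{Z1} follows by a bootstrap: any solution there satisfies the same Duhamel estimate on $[\tau,t]$ for every $\tau>0$, which forces $\sup_{0<t'\le t}(t')^\theta\|v(t')\|_{\Cc^{2\e}}$ to be finite and $\les\|v_0\|_{\Cc^{s_0}}+1$ for $t$ small, so $v$ agrees with the contraction solution on a short interval and, by iteration, on $[0,T]$. Finally, the locally Lipschitz dependence of the solution map on $(v_0,\Xi)$ is read off from the same difference estimates, comparing the fixed points of two enhanced data sets on a common time interval. The only genuine difficulty is bookkeeping the three competing time singularities — $(t-t')^{-3\e/2}$ from the Schauder estimate, $(t')^{-\theta d}$ from the $\Cc^{2\e}$-weights on the powers of $v$, and the $L^q_t$-integrability of $\Xi$ — and verifying that the threshold $-s_0<\frac{2}{2m-1}$ is exactly what renders the relevant integral finite after optimizing $\e\to 0$ and $q\to\infty$.
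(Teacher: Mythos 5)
Your proposal is correct and follows essentially the same route as the paper: a contraction argument for the Duhamel map in the weighted space $X^{s_0,2\e}_T$ with weight $t^{(2\e-s_0)/2}$, using the Schauder estimate of Lemma~\ref{LEM:semigroup}, the product estimates of Lemma~\ref{LEM:Besov}, H\"older in time to exploit the $L^q_T\Cc^{-\e}$ bound on $\Xi$, and the Beta-function integral of Lemma~\ref{LEM:tech lem 1}, with the same exponent bookkeeping tied to $-s_0<\tfrac{2}{2m-1}$. Your additional vanishing condition $t^\theta\|v(t)\|_{\Cc^{2\e}}\to0$ and the bootstrap for uniqueness in the full class \eqref{Z1} merely spell out the "standard arguments" the paper omits.
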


\medskip
Here, for $ T >0$, a solution to \eqref{model equation} on $[0,T]$ is defined to be a function $v$ satisfying the Duhamel formulation
\begin{equation}\label{Duhamel}
v(t) = S(t) v_0 + \int_{0}^t S(t-t') F(v, \Xi) dt' \quad \text{for all $t \in [0,T]$},
\end{equation}
where $F$ is as in \eqref{def F} and $S(t)$ is as in \eqref{def semi-group}.

We will build a solution to \eqref{model equation} in the space
\begin{align*}
 X^{s_0, 2\e}_T=C \left(  [0,T]; \Cc^{s_0}(\M)   \right) \cap  C (  (0,T]; \Cc^{2\e }(\M)),
 \end{align*}
  endowed with the norm
\begin{align}\label{XT}
 \| v \|_{X^{s_0, 2\e}_T} = \| v \|_{L^{\infty}_T \Cc^{s_0}} +  \underset{0 < t \le T}{\sup} t^{\frac{2\e-s_0}2} \| v \|_{\Cc^{2\e}}.  
 \end{align}
 
We will use the following elementary lemma.
\begin{lemma}\label{LEM:tech lem 1}
Let $0< \theta_1, \theta_2 <1$. Then the following estimate holds for any $t>0$:
\[  \int_{0}^t (t-s)^{- \theta_1} s^{- \theta_2} ds \les t^{1 - \theta_1 - \theta_2}. \]
\end{lemma}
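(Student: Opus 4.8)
The plan is to prove Lemma~\ref{LEM:tech lem 1} by a scaling argument followed by a reduction to the Beta function. First I would perform the substitution $s = t\tau$ with $\tau \in [0,1]$, so that $ds = t\,d\tau$ and $t-s = t(1-\tau)$. This turns the integral into
\[
\int_0^t (t-s)^{-\theta_1} s^{-\theta_2}\,ds = t^{1-\theta_1-\theta_2} \int_0^1 (1-\tau)^{-\theta_1} \tau^{-\theta_2}\,d\tau,
\]
which immediately isolates the claimed power of $t$; all that remains is to verify that the $\tau$-integral is finite.

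Next I would observe that $\int_0^1 (1-\tau)^{-\theta_1}\tau^{-\theta_2}\,d\tau = B(1-\theta_2, 1-\theta_1)$ is precisely the Beta function, which converges exactly because $1-\theta_1 > 0$ and $1-\theta_2 > 0$ under the hypothesis $0 < \theta_1, \theta_2 < 1$. If one prefers to avoid invoking the Beta function, one can split the $\tau$-integral at $\tau = \tfrac12$: on $[0,\tfrac12]$ the factor $(1-\tau)^{-\theta_1}$ is bounded by $2^{\theta_1}$ and $\int_0^{1/2}\tau^{-\theta_2}\,d\tau < \infty$ since $\theta_2 < 1$; on $[\tfrac12,1]$ the factor $\tau^{-\theta_2}$ is bounded by $2^{\theta_2}$ and $\int_{1/2}^1 (1-\tau)^{-\theta_1}\,d\tau < \infty$ since $\theta_1 < 1$. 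Either way the constant is $C = C(\theta_1,\theta_2)$, giving the bound $\les t^{1-\theta_1-\theta_2}$.

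This lemma is elementary and presents essentially no obstacle; the only point requiring (minor) care is tracking that the integrability at \emph{both} endpoints is used, corresponding to the two strict inequalities $\theta_1 < 1$ and $\theta_2 < 1$ in the hypothesis, while the exponent of $t$ in the conclusion can be any real number (in particular negative, when $\theta_1 + \theta_2 > 1$, which is the regime in which the estimate is actually useful for the Duhamel iteration in \eqref{Duhamel}).
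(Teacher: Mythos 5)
Your proof is correct and complete: the substitution $s=t\tau$ gives the exact identity with constant $B(1-\theta_2,1-\theta_1)$, and your elementary splitting at $\tau=\tfrac12$ is a fine alternative to invoking the Beta function. The paper states this lemma without proof (it is flagged as elementary), and your scaling argument is precisely the standard one intended, so there is nothing to reconcile.
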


\medskip
\begin{proof}[Proof of Proposition \ref{PROP:LWP1}]
Let the parameters $\alpha_1, \beta_1, \alpha_2,\beta_2, m, p, q, \eps$ and the data $(v_0, \Xi)$ be as in the statement.

First, we show the existence of the solution in the Banach space $X_T^{s_0, 2\e}$ for some $T \in (0,1]$ to be determined later. To do so, we show that the following map acts as a contraction on a ball of $X_T^{s_0, 2\e}$:
\begin{align}\label{Gamma}
\G v(t) = S(t) v_0 + \int_{0}^t S(t-t') F(v, \Xi) dt', \quad \text{for all $t \in [0,T]$}. 
\end{align}

Let $R = \max(\|v_0\|_{C^{s_0}_x}, \|\Xi\|_{\mathcal Z^{\eps,q}})$ and fix $R_0 > R\ge 1$ to be chosen later.

We first estimate the linear term in the right-hand side of \eqref{Duhamel} using Schauder's estimate (Lemma~\ref{LEM:semigroup}~(i)) for $0 < t \leq T$:
\begin{align}\label{LWP1 1}
 \big\| S(t) v_0 \big\|_{\Cc^{2\e}} \les t^{\frac{s_0-2\e}{2}}  \| v_0 \|_{\Cc^{s_0}}.
\end{align}
As for the nonlinear term, we denote by $B_{R_0}$ the ball of radius $R_0$ in $X_T^{s_0, 2\e}$, and for $v \in B_{R_0}$,  using the linear and bilinear estimates of Lemma~\ref{LEM:semigroup}~(i) and Lemma~\ref{LEM:Besov}~(i)-(ii) with the definitions \eqref{def F} of $F$ and \eqref{XT} of the norm in $X^{s_0,2\e}_T$, we have
\begin{align}\label{LWP1 2}
\begin{split}
\big\| F(v,\Xi) (t') \big\|_{\Cc^{-\e}} & \les  \sum_{\substack{ 0 \leq j_1 \leq m \\  0 \leq j_2 \leq m-1}}  \|  v(t') \|^{j_1+j_2}_{\Cc^{2\e}}   \|\Xi_{m-j_1,m-j_2-1}(t') \|_{\Cc^{-\e}}  \\
& \les  R_0^{2 m - 1} (t')^{(2m-1)\frac{s_0-2\e}{2}} \sum_{\substack{ 0 \leq j_1 \leq m \\  0 \leq j_2 \leq m-1}}   \| \Xi_{j_1,j_2} (t') \|_{\Cc^{-\e}}
\end{split}
\end{align}
for $0<t'\le T$. Hence, combining \eqref{LWP1 1}, \eqref{LWP1 2}, Lemma~\ref{LEM:semigroup}~\textup{(i)}, H\"older's inequality in time and Lemma~\ref{LEM:tech lem 1} yields
\begin{align} \label{LWP1 3}
\begin{split}
 &t^{\frac{2 \e - s_0}{2}} \big\| \G v(t) \big\|_{\Cc^{2\e}}  \\
 &\les  \| v_0 \|_{\Cc^{s_0}} +  t^{\frac{2 \e - s_0}{2}}  R_0^{2 m - 1}  \int_0^t (t-t')^{-\frac{3 \e}{2}} (t')^{(2m-1)\frac{s_0-2\e}{2}} \sum_{\substack{ 0 \leq j_1 \leq m \\  0 \leq j_2 \leq m-1}}   \| \Xi_{j_1,j_2} (t') \|_{\Cc^{-\e}} dt'    \\
&  \les   \| v_0 \|_{\Cc^{s_0}} +  t^{\frac{2 \e - s_0}{2}}  R_0^{2 m - 1} \Big( \int_0^t (t-t')^{- q' \frac{3 \e}{2}} (t')^{q' (2m-1)\frac{s_0-2\e}{2}} dt' \Big)^{\frac{1}{q'}}  \|\Xi\|_{\mathcal Z^{\eps,q}_{T_0}}    \\
& \les   \| v_0 \|_{\Cc^{s_0}} +  R_0^{2 m-1}R \  t^{ \frac{1}{q'} - \frac{3  \e}{2}  + (2m-1 )\frac{s_0 - 2 \e}{2}}
\end{split}
\end{align}
for any $v\in B_{R_0}$ and $0 \leq t \leq T$. Here, $q'$ is the dual exponent of $q$. Similarly, for $\e < -s_0$ and $0<t\leq T$, we have that
\begin{align}\label{LWP1 4}
\begin{split}
\big\| \G v(t) \big\|_{\Cc^{s_0}} & \les  \| v_0 \|_{\Cc^{s_0}} +  \int_0^t \big\|   F(v,\Xi)(t')  \big\|_{\Cc^{-\e}} dt' \\
& \les  \| v_0 \|_{\Cc^{s_0}} + R_0^{2m-1}R  t^{ \frac{1}{q'} +  \frac{s_0 - 2 \e}{2}(2m-1)  }. 
\end{split}
\end{align}
Thus, by choosing then $\e$ small enough and $q$ large enough so that the exponent in the powers of $t$ in \eqref{LWP1 3} and \eqref{LWP1 4} are positive, we deduce the existence of a constant $C>0$ such that
\begin{align}\label{LWP1 5}
\big\| \G v \big\|_{X^{s_0, 2\e}_T} \leq C \| v_0 \|_{\Cc^{s_0}} + C  R_0^{2m-1} R T^{\theta },
\end{align}
with $\theta =  \big( 1 + (2m-1)\frac{s_0-2\e}{2} \big) >0 $. We finally choose $R_0 = 2 C R$ and $T\in (0,T_0]$ small enough such that $R^{2m} T^{\theta} \ll \frac{R}{2}$. This leads to
\begin{align}\label{LWP1 5b}
\big\| \G v \big\|_{X^{s_0, 2\e}_T} \le R_0
\end{align}
for any $v\in B_{R_0}$.

In order to conclude that $\G$ maps $B_{R_0}$ to $B_{R_0}$, we need to check the continuity in time of $\G v$. Fix $t \in (0,T) $ and a small $h>0$. Let $0 < \dl \ll_{\e, s_0} 1$. By Lemma~\ref{LEM:semigroup}~\textup{(i)-(iii)} and Lemma \ref{LEM:Besov}~\textup{(i)}, we can estimate similarly as before
\begin{align*}
& t^{\frac{2\e - s_0}{2}} \big\| \G v(t+h) - \G v(t) \big\|_{\Cc^{2\e}}  \\  & \les \big\| ( S(t+ h) - S(t)) v_0 \big\|_{\Cc^{s_0}}  + t^{\frac{2\e - s_0}{2}} \int_{0}^t  \big\|  \big( S(t+h-t') -S(t-t') \big) F(v,\Xi)(t') \big\|_{\Cc^{2\e}} \\
& \les \big\| ( S(h) - \operatorname{Id}) v_0 \big\|_{\Cc^{s_0}} + t^{\frac{2\e - s_0}{2}} |h|^{\dl} \int_{0}^t  \big\|   S(t-t') F(v,\Xi)(t') \big\|_{\Cc^{2\e + 2 \del}} \\
& \les \big\| ( S(h) - \operatorname{Id}) v_0 \big\|_{\Cc^{s_0}} + t^{\frac{2\e - s_0}{2}} |h|^{\dl} \int_{0}^t (t-t')^{\frac{-3\e -2\dl}{2}} \big\|  F(v,\Xi)(t') \big\|_{\Cc^{- \e}} \\
& \les \big\| ( S(h) - \operatorname{Id}) v_0 \big\|_{\Cc^{s_0}} + |h|^{\dl} R_0^{2 m-1} R  T^{ \frac{1}{q'} - \frac{3 (\e - 2 \dl) }{2}  + \frac{s_0 - 2 \e}{2}(2m-1) }.
\end{align*}
Therefore, taking the supremum in $t \in (0,T]$ in the above and using Lemma \ref{LEM:Besov}~\textup{(iii)} shows the right continuity on $(0;T)$. A similar computation shows the left conitnuity on $(0;T]$, therefore $\G v \in C \left(  (0,T]; \Cc^{2\e }(\M)   \right)$. The continuity of the map $t \in [0,\infty) \mapsto \G v(t) \in \Cc^{s_0}(\M)$ is shown by similar arguments. Hence, $\G$ maps $B_{R_0}$ to itself.

Next, we show that $\G$ is a contraction on $B_{R_0}$. Using the algebraic identity
\begin{align*}
a^{j_1} \cj{a}^{j_2} - b^{j_1} \cj{b}^{j_2} = (a-b) \cj{a}^{j_2} \sum_{k=0}^{j_1-1} b^k a^{j_1-1-k} + \cj{(a-b)} b^{j_1} \sum_{k=0}^{j_2-1} \cj{b}^k \cj{a}^{j_2-1-k},
\end{align*}
which holds for any $a,b\in\C$, and then proceeding as above, we have
\begin{align}\label{LWP1 6}
&\big\| F(v_2, \Xi)(t) - F(v_1,\Xi)(t)  \big\|_{\Cc^{-\e}}\notag\\& \qquad\les t^{(2m-1) \frac{s_0 - 2 \e}{2} } R_0^{2m-2} \Big(  \sum_{\substack{ 0 \leq j_1 \leq m \\  0 \leq j_2 \leq m-1}}   \| \Xi_{j_1,j_2} (t) \|_{\Cc^{-\e}} \Big) \| v_2- v_1 \|_{X^{s_0, 2 \e}_T}
\end{align}
for $v_1,v_2 \in B_{R_0}$ and $0 < t \leq T$. By doing computations similar to those that lead to \eqref{LWP1 3} and \eqref{LWP1 4} we get
\begin{align*}
\big\| \G v_2 - \G v_1 \big\|_{X^{s_0, 2\e}_T} \leq  C  R_0^{2m-2} R T^{\theta } \| v_2 - v_1 \|_{X^{s_0, 2\e}_T},
\end{align*}
with $ \theta  >0$ as above. Hence, by choosing $T$ smaller if necessary so that $  C  R_0^{2m-2} R  T^{\theta } \leq \frac12 $, we obtain a unique solution $v\in B_{R_0}$ to the fixed point problem \eqref{Duhamel} by Banach fixed-point theorem. 

The rest of the proof follows from similar and standard arguments and we omit details.
\end{proof}

With the general local well-posedness result of Proposition~\ref{PROP:LWP1}, we can prove our first main result.
\begin{proof}[Proof of Theorem \ref{THM : LWP theory 1}]Let $\alpha_1, \beta_1, \gamma>0$ and $\alpha_2,\beta_2\in\R$. Let $m\ge 2$, $-\frac2{2m-1}<s_0<0$ and $u_0\in \Cc^{s_0}(\M)$. Fix then $0<\eps\ll 1 \ll q$ as in Proposition~\ref{PROP:LWP1}. For any $M \geq 1$ we define a subset $\Si_M\subset\Omega$ as
\begin{align}\label{sol1}
\begin{split}
\Si_M = \Big\{ & \om \in \O,~\Psi \in C\left( [0,1]; \Cc^{-\e}(\M) \right),~\forall N\ge 1 ~\big\|\Psi_N-\Psi\|_{C([0,1])\Cc^{-\eps}}\le M N^{-\frac{\dl}2},  \\
& \forall 0\le j_1 \le m,0\le j_2\le m-1,~ \big\| :\Psi^{j_1} \cj{\Psi}^{j_2}: \big\|_{L^q_{[0,1]} \Cc^{-\e}} \leq M, \\
& \text{ and } \forall N\ge 1,~\big\|  :\PN^{j_1} \cj{\Psi_N}^{j_2}: - : \Psi^{j_1} \cj{\Psi}^{j_2}  : \big\|_{L^{q}_{[0,1]}\Cc^{-\e}}\le M N^{-\frac{\dl}2}\Big\},
\end{split}
\end{align} 
where $\dl>0$ is as in Proposition~\ref{PROP:construction stochastic objects}. Then, by the tail estimates \eqref{deviation}, \eqref{deviation2} and \eqref{deviation3} in Proposition~\ref{PROP:construction stochastic objects}, we have
\[ \Prob \big( \O \setminus{ \Si_M } \big) \leq C e^{-c M^{\frac{2}{m}} }.\]

For $N \in \N \cup \{\infty\}$, let $u_N = \Psi_N + v_N$, where $\Psi_N$ is as in \eqref{truncated stochastic convolution}, $v_N$ solves \eqref{vN} with the initial data $u_0 - \Psi_N(0)$ and $\Xi_N = \{:\!\PN^{j_1} \cj{\Psi_N}^{j_2}\!\!:\}_{0\le j_1\le m,0\le j_2 \le m-1}$. Note that for finite $N$, $u_N$ solves~\eqref{renormalized truncated SCGL 1}.

Let $M \ge 1$. By construction of the set $\Sigma_M$ and the stability results of Proposition \ref{PROP:LWP1}, there exists $ 0 < T= T(M) \leq 1$ such that $v_N \to v_{\infty}$ in $C \left(  [0,T]; \Cc^{s_0}(\M)   \right) $ as $N \to \infty$, on $\Sigma_M$. Since $\Psi_N \to \Psi_{\infty}$ in $C \left(  [0,T]; \Cc^{-\eps}(\M)   \right)$, as $N \to \infty$ on $\Sigma_M$, we have that
\[u_N\to u \text{ in }C([0,T];\Cc^{s_0}(\M)),\]
as $N \to \infty$, on $\Sigma_M$. Here, $u = u_{\infty} = \Psi_{\infty} + v_{\infty}$.

Finally, Borel-Cantelli's lemma shows that
\[ \Si \deff \underset{M \to \infty}{\liminf} \ \Si_M \]
is of full probability, and from the previous discussion, we have pathwise local well-posedness on $\Si$.
\end{proof}
Following the argument in \cite{MW2}, our strategy to prove deterministic global well-posedness is to prove an a priori $L^p$ estimate on the growth in time of a solution to \eqref{model equation}; see the next subsection. Thus, we first need to have a control on the existence time of the solutions depending on the $L^p$-norm of their initial data.
\begin{proposition} \label{PROP:LWP2}
Let $\alpha_1, \beta_1>0$ and $\alpha_2,\beta_2\in\R$. Fix an integer  $m \geq 2$, $p > 2m-1$ and $0 < \eps \ll 1 \ll q$. Then, the equation \eqref{model equation} is locally well-posed in $L^p(\M) \times \mathcal Z^{\eps,q}(\R_+)$.\footnote{Here, we introduce the data $\Xi$ on $\R_+$ since we need to consider $\Xi$ over abitrary large time interval in the blowup alternative below.} More precisely, 
given an enhanced data set: 
\begin{align*}
 (v_0, \Xi) \in L^p(\M) \times \mathcal{Z}^{\eps,q}([0,1]), 
\end{align*}
with $\Xi = \{\Xi_{j_1,j_2}\}_{0\le j_1\le m, 0\le j_2 \le m-1}$ \textup{(}with $\Xi_{m,m-1} = 1$\textup{)} there exist $T = T\big(\|v_0\|_{L^p} , \|\Xi\|_{\mathcal{Z}^{\eps,q}}\big) \in (0, 1]$ and a unique solution $v$ to \eqref{model equation} in the class
\begin{align}
C([0,T]; L^p(\M)) \cap C ((0,T]; \Cc^{2\e }(\M)).
\label{Z2}
\end{align}
In particular, the uniqueness of $v$ 
holds in the entire class \eqref{Z2}.
Furthermore, the solution map 
\begin{align}
(v_0,\Xi) \in L^p(\M) \times \mathcal{Z}^{\eps,q}([0,1])
\mapsto 
v \in C([0,T]; L^p(\M)) \cap C ((0,T]; \Cc^{2\e }(\M))
\label{solmap2}
\end{align}

\noi
is locally Lipschitz continuous.

Lastly, there is the following blow-up alternative: for any fixed $T_0 >0$, let the time $\T = \T \big(T_0,\| v_0 \|_{L^p},\|\Xi\|_{\mathcal Z^{\eps,q}}\big) \in (0,T_0]$ denotes the maximal time of existence of $v$ on $[0,T_0]$. Then, we have
\begin{align*}
\T=T_0 \qquad\text{or}\qquad\sup_{0\le t<\T}\|v(t)\|_{L^p}=+\infty.
\end{align*}
\end{proposition}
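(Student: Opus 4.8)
The plan is to run the same contraction scheme as in the proof of Proposition~\ref{PROP:LWP1}, only measuring the rough datum in $L^p(\M)$ instead of $\Cc^{s_0}(\M)$. I would work in the Banach space
\[
Y_T \deff C([0,T];L^p(\M))\cap C((0,T];\Cc^{2\e}(\M)),
\]
equipped with the norm $\|v\|_{Y_T}\deff\|v\|_{L^\infty_TL^p}+\sup_{0<t\le T}t^{\e+\frac1p}\|v(t)\|_{\Cc^{2\e}}$, the weight $\e+\frac1p$ being dictated by the Schauder estimate of Lemma~\ref{LEM:semigroup}~(i), which gives $\|S(t)v_0\|_{\Cc^{2\e}}\les t^{-\e-\frac1p}\|v_0\|_{L^p}$, together with $\|S(t)v_0\|_{L^p}\les\|v_0\|_{L^p}$ (the latter from Corollary~\ref{COR : bernstein} with $p=q$, writing $S(t)=\psi(-t\Dla)$ for a Schwartz $\psi$ equal to $e^{-(\alpha_1+i\alpha_2)x}$ on $[0,\infty)$, as in the proof of Lemma~\ref{LEM:semigroup}). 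The goal is then to show that the Duhamel map $\G$ of \eqref{Gamma} contracts on a small ball $B_{R_0}\subset Y_T$.

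For the nonlinearity, the point is that, exactly as in \eqref{LWP1 2}, the product estimates of Lemma~\ref{LEM:Besov}~(i)--(ii) yield
\[
\big\|v^{j_1}\cj v^{j_2}\,\Xi_{m-j_1,m-j_2-1}(t')\big\|_{\Cc^{-\e}}
\les\|v(t')\|_{\Cc^{2\e}}^{j_1+j_2}\|\Xi_{m-j_1,m-j_2-1}(t')\|_{\Cc^{-\e}}
\les(t')^{-(j_1+j_2)(\e+\frac1p)}\|v\|_{Y_T}^{j_1+j_2}\|\Xi(t')\|_{\Cc^{-\e}},
\]
with $0\le j_1+j_2\le 2m-1$ in the sum defining $F$. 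Plugging this into \eqref{Duhamel}, estimating $S(t-t')$ from $\Cc^{-\e}$ into both $L^p$ (by gaining $\e$ derivatives through $\Cc^{-\e}\to\Cc^{\e'}\hookrightarrow L^\infty\hookrightarrow L^p$) and into $\Cc^{2\e}$ (Lemma~\ref{LEM:semigroup}~(i)), then applying H\"older's inequality in time against $\Xi\in L^q_T\Cc^{-\e}$ followed by Lemma~\ref{LEM:tech lem 1}, one reaches a bound of the form $\|\G v\|_{Y_T}\le C\|v_0\|_{L^p}+C R_0^{2m-1}\|\Xi\|_{\mathcal Z^{\eps,q}}T^{\theta}$ on $B_{R_0}$, for some $\theta>0$. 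This is the single place where $p>2m-1$ is used: after the H\"older step the integrand behaves near $t'=0$ like $(t')^{-(2m-1)(\e+\frac1p)}$, which is $L^{q'}$ in time precisely when $(2m-1)(\e+\frac1p)<\frac1{q'}$, and since $\frac{2m-1}{p}<1$ this holds for $0<\e\ll1$ and $q\gg1$ (cf.\ Remark~\ref{RMK:expo}); the same range of parameters also forces $\theta>0$. The contraction estimate follows identically, using the algebraic identity $a^{j_1}\cj a^{j_2}-b^{j_1}\cj b^{j_2}=(a-b)\cj a^{j_2}\sum_{k=0}^{j_1-1}b^k a^{j_1-1-k}+\cj{(a-b)}b^{j_1}\sum_{k=0}^{j_2-1}\cj b^k\cj a^{j_2-1-k}$ to trade one factor of $v$ for $v_2-v_1$. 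Choosing $R_0\sim\max(\|v_0\|_{L^p},\|\Xi\|_{\mathcal Z^{\eps,q}})$ and $T$ small, the Banach fixed point theorem gives a unique solution in $B_{R_0}$; the continuity in time of $\G v$ (hence $\G(B_{R_0})\subset B_{R_0}$) and the local Lipschitz dependence of the solution map on $(v_0,\Xi)$ are then obtained verbatim as in Proposition~\ref{PROP:LWP1}, using Lemma~\ref{LEM:semigroup}~(ii)--(iii), the density of $C^\infty(\M)$ in $L^p(\M)$, and the $L^p$-boundedness of $S(t)$.

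For the blow-up alternative I would argue by contradiction: suppose $\T<T_0$ and $M\deff\sup_{0\le t<\T}\|v(t)\|_{L^p}<\infty$. For $t_0\in(0,\T)$ the instantaneous smoothing gives $v(t_0)\in\Cc^{2\e}(\M)\hookrightarrow L^p(\M)$ with $\|v(t_0)\|_{L^p}\le M$, while the translated data satisfy $\|\Xi(\cdot+t_0)\|_{\mathcal Z^{\eps,q}([0,1])}\le\max_{j_1,j_2}\|\Xi_{j_1,j_2}\|_{L^q([0,T_0+1];\Cc^{-\e})}=:M'<\infty$, uniformly in $t_0\in[0,\T]$. Hence the existence time furnished by the local theory just established, when started at time $t_0$ from $v(t_0)$ with data $\Xi(\cdot+t_0)$, is bounded below by some $\tau=\tau(M,M')>0$ independent of $t_0$; taking $t_0\in(\T-\tfrac\tau2,\T)$ and gluing (by the uniqueness in the class \eqref{Z2}) the resulting solution to $v$ produces a solution on an interval strictly containing $[0,\T]$ and contained in $[0,T_0]$ (shrinking $\tau$ if necessary), contradicting the maximality of $\T$. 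Therefore $\T=T_0$ or $\sup_{0\le t<\T}\|v(t)\|_{L^p}=+\infty$.

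I do not expect a conceptual obstacle, as the whole scheme is a direct transcription of Proposition~\ref{PROP:LWP1}; the one point that needs care is the bookkeeping of the time weights in the Duhamel estimate and verifying that $p>2m-1$ is exactly the threshold making the $(2m-1)$-fold product of $\Cc^{2\e}$-norms (each carrying a $(t')^{-(\e+\frac1p)}$ singularity) integrable near $t'=0$ after pairing with $\Xi\in L^q_T\Cc^{-\e}$.
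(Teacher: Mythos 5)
Your proposal is correct and follows essentially the same route as the paper: the paper also runs the contraction in $C([0,T];L^p)\cap C((0,T];\Cc^{2\e})$ with exactly the weight $t^{\e+\frac1p}$, uses the Schauder/Besov-embedding bound $\|S(t)v_0\|_{\Cc^{2\e}}\les t^{-\e-\frac1p}\|v_0\|_{L^p}$, and invokes $p>2m-1$ at precisely the same H\"older-in-time step to make $(t')^{-(2m-1)(\e+\frac1p)}$ integrable against $\Xi\in L^q_T\Cc^{-\e}$. The blow-up alternative, which the paper leaves to ``standard arguments,'' is filled in by your continuation-and-gluing argument in the expected way, so no discrepancy there.
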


\medskip

We define for $T>0$, $\eps>0$ and $1 \le p < \infty$ the Banach space 
$$Y^{\eps}_p([0,T])=C([0;T];L^p(\M))\cap C((0,T];\Cc^{2\eps}(\M)),$$ endowed with the norm
\begin{align*}
\|v\|_{Y^{\eps}_p(T)} =  \| v \|_{L^{\infty}_T L^p_x } + \sup_{0<t\le T}t^{\eps+\frac1p}\|v(t)\|_{\Cc^{2\eps}}.
\end{align*} 

\begin{proof} 
Let the parameters $\alpha_1, \beta_1, \alpha_2,\beta_2, m, p, q, \eps$ and the data $(v_0, \Xi)$ be as in the statement.

We first construct solutions in $Y^{\eps}_p([0,T])$ for some small $0< T \ll 1$. We only sketch the argument here (i.e. we prove a priori estimates) as the rest follows as the proof Proposition \ref{PROP:LWP1}. Using Lemma~\ref{LEM:Besov}~\textup{(iii)} together with Lemma~\ref{LEM:semigroup}~\textup{(i)} and the uniform boundedness of the smooth frequency projectors $\{\P_N\}_{N \in \N}$, we have
\begin{align}\label{LWP2-1}
\|S(t) v_0 \|_{\Cc^{2\e}} \les \|S(t) v_0 \|_{\Bb^{2\e + \frac2p}_{p, \infty}} \les t^{- \e - \frac1p} \|v_0\|_{L^p}.
\end{align}
Hence, letting $\G$ be as in \eqref{Gamma} above, we can proceed as for \eqref{LWP1 3} by using \eqref{LWP2-1} and Young's inequality to estimate
\begin{align}
\begin{split}
t^{\e + \frac1p} \| \Gamma v(t) \|_{\Cc^{2\e}} & \les  \|v_0\|_{L^p} +  \|v\|_{Y^{\eps}_p(T)} ^{2m-1} \cdot t^{\e + \frac1p} \int_{0}^t (t-t')^{-\frac{3 \e}{2}} (t')^{-(2m-1)(\e + \frac{1}{p})} \| \Xi (t')\|_{\Cc^{-\e}} dt' \\
& \les \|v_0\|_{L^p} + \|v\|_{Y^{\eps}_p(T)} ^{2m-1} \, \|\Xi\|_{\mathcal Z^{\eps,q}} T^{ \frac{1}{q'} -\frac{3\e}{2} - (2m-2) (\e + \frac1p)}
\end{split}
\label{LWP2-2}
\end{align}
 for $0 < t \le T$. Note that we used Lemma~\ref{LEM:tech lem 1} in the second estimate, which was allowed under the assumption $ \frac{2m-1}{p} < 1 $ provided that $\eps$ is small enough. 

Similarly, we also get
\begin{align}
\|\Gamma v\|_{L^\infty_TL^p_x} \les \|v_0\|_{L^p} + \|v\|_{Y^{\eps}_p(T)} ^{2m-1} \, \|\Xi\|_{\mathcal Z^{\eps,q}} T^{\ta}
\label{LWP2-3}
\end{align}
for all $0\le t \le T$ and a constant $\ta>0$. Hence taking the supremum in $0 < t \le T$ in \eqref{LWP2-2} and $0 \le t \le T$ in \eqref{LWP2-3} and arguing as in Proposition~\ref{PROP:LWP1} shows that $\Gamma$ maps small balls in $Y^{\eps}_q([0,T])$ to themselves. The contraction property may be proved in a similar fashion. This yields solution $v\in Y^{\eps}_p([0,T])$ to \eqref{Duhamel} for \eqref{model equation}. The rest of the proof follows from similar and standard arguments and we omit details.
\end{proof}
\subsection{Global well-posedness}\label{SUBSEC : gwp}
We now discuss the global well-posedness result stated in Theorem~\ref{THM : deterministic GWP}. In this part, we assume $\alpha_1, \beta_1 >0 $, $\al_2 \ge 0$ and denote\footnote{Recall that we set $r=\infty$ when $\alpha_2=0$.} by $r$ the \textit{dissipation-to-dispersion} ratio:
\begin{equation}\label{def of r}
r = \frac{\alpha_1}{\alpha_2} \in (0,\infty].
\end{equation}
Note that the assumption made above on $\alpha_2$ is not actually necessary (i.e. it suffices to assume that $\al_2 \in \R)$. However, by conjugating \eqref{renormalized truncated SCGL 1} and using the fact that the norms that we use are invariant under complex conjugation, we can always assume $\alpha_2 \ge 0$.

Next, recall we constructed a local solution $u = \Psi + v$ to \eqref{WSCGL} as the limit of $u_N=\PN + v_N$, where $v_N$ solves \eqref{vN}. Moreover, for any $t_0>0$ it holds $v_N(t_0)\in L^p(\M)$. Therefore, in view of the blow-up alternative in Proposition~\ref{PROP:LWP2}, the goal is then to obtain uniform (in $N\ge 1$) bounds on $\|v_N(t)\|_{L^p}$ in order to obtain that $v_N$ is well-defined and approximates $v$ up to some arbitrary time $T>0$. Thus, our main result in this subsection is the following global well-posedness for the model equation \eqref{model equation}.
\begin{proposition}\label{PROP:deterministic} 
Let $\al_1, \be_1>0$, $\al_2 \ge 0$ and $\be_2 \in \R$. Fix an integer $m \ge 2$ and $ - \frac{2}{2m-1}<s_0<0$. Assume that the dissipation-to-dispersion ratio $r$ in \eqref{def of r} satisfies \eqref{condition on r}, and take $p\ge 1$ satisfying \eqref{condition-p}.
Then, there exists $0<\eps = \eps(m,p)\ll1$, finite $q = q(p,m)\ge 1$ such that the following holds. Given an enhanced data set:
\begin{align*}
 (v_0, \Xi) \in  \Cc^{s_0}(\M) \times \mathcal Z^{\eps, q}(\R_+), 
\end{align*}
where $\Xi = \{\Xi_{j_1,j_2}\}_{0\le j_1\le m,0\le j_2 \le m-1}$ \textup{(}with $\Xi_{m,m-1} = 1$\textup{)}, there exists a unique solution $v$ to \eqref{model equation} in the class\footnote{Here, spaces of the form $C(\R_+; X)$ (or $C(\R_+^*; X)$), for a Banach space $X$, are endowed with the compact-open topology.}
\begin{align}
C(\R_+; \Cc^{s_0}(\M)) \cap  C(\R_+^{*}; \Cc^{2\e }(\M) ).
\label{Z3}
\end{align}
Moreover, for each $T>0$, there exists a positive constant $B = B(T, \|v_0\|_{\Cc^{s_0}}, \|\Xi\|_{\mathcal Z^{\eps,q_0}_T})$ such that the following a priori estimate holds:
\begin{align}
\sup_{t \in (0,T]}  \|v(t) \|_{\Cc^{2\eps}} \leq B.
\label{boundY}
\end{align}
In particular, the uniqueness of $v$ 
holds in the entire class \eqref{Z3}.
Furthermore, the solution map 

\noi
\begin{align}
(v_0,\Xi) \in \Cc^{s_0}(\M) \times \mathcal{Z}^{\eps,q}([0,T])
\mapsto 
v \in C([0,T]; \Cc^{s_0}(\M)) \cap  C([0,T]; \Cc^{2\e }(\M))
\end{align}
is locally Lipschitz continuous, for every $T>0$.
\end{proposition}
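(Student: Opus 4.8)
The plan is to combine the two local Cauchy theories of Propositions~\ref{PROP:LWP1} and~\ref{PROP:LWP2} with an a priori $L^p$ bound obtained from an energy estimate that uses the defocusing structure ($\beta_1>0$) of the top-order nonlinearity; throughout, $\e$ is taken small and $q$ large depending on $(m,p)$, as forced by the estimates below. First, given $(v_0,\Xi)$ as in the statement, Proposition~\ref{PROP:LWP1} provides a local solution $v$ on some $[0,T_1]$ (with $T_1$ depending only on $\|v_0\|_{\Cc^{s_0}}$ and $\|\Xi\|_{\mathcal Z^{\eps,q}}$) in $C([0,T_1];\Cc^{s_0}(\M))\cap C((0,T_1];\Cc^{2\e}(\M))$. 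Setting $\tau_0=T_1/2$, one has $v(\tau_0)\in\Cc^{2\e}(\M)\hookrightarrow L^p(\M)$, so Proposition~\ref{PROP:LWP2} continues $v$ past $T_1$; uniqueness in the overlapping $L^p$-class (a function in $C([\tau_0,\cdot];\Cc^{2\e})$ lies in $C([\tau_0,\cdot];L^p)$ and solves the same Duhamel equation~\eqref{Duhamel}) makes this unambiguous. Iterating, for each target time $T_0>0$ we obtain a maximal solution on $[0,\T)$ with $\T\le T_0$, to which the blow-up alternative of Proposition~\ref{PROP:LWP2} (shifted to start at $\tau_0$) applies; since $\|v(\tau_0)\|_{L^p}$ is controlled by $\|v_0\|_{\Cc^{s_0}}$, $\|\Xi\|_{\mathcal Z^{\eps,q}}$, $\tau_0$, global existence follows once we prove, for each $T_0$, an a priori bound $\sup_{\tau_0\le t<\T}\|v(t)\|_{L^p}\le B_0\big(T_0,\|v_0\|_{\Cc^{s_0}},\|\Xi\|_{\mathcal Z^{\eps,q}_{T_0}}\big)$.

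The main step is this a priori estimate. For $t>\tau_0$, bootstrapping the Duhamel formula shows $v(t)$ is of class $C^1$ in $x$ (parabolic smoothing, $q$ large), so one may test~\eqref{model equation} against $|v|^{p-2}v$ and obtain, by a standard mollification argument,
\[
\tfrac1p\tfrac{d}{dt}\|v(t)\|_{L^p}^p=\Re\big\langle(\alpha_1+i\alpha_2)\Dla v,\,|v|^{p-2}v\big\rangle_{L^2}-\Re\big\langle F(v,\Xi),\,|v|^{p-2}v\big\rangle_{L^2}.
\]
For the linear term I would write $\Dla=-D_A^*D_A$ plus lower-order magnetic terms and use the chain rule together with the fact that $|v|$ carries no magnetic twist; the computation reduces to the classical $L^p$-dissipativity estimate for the complex Laplacian, and condition~\eqref{condition-p} is exactly the threshold $|\alpha_2|/\alpha_1<\tfrac{2\sqrt{p-1}}{p-2}$, yielding
\[
\Re\big\langle(\alpha_1+i\alpha_2)\Dla v,\,|v|^{p-2}v\big\rangle_{L^2}\le-c\,\big\| |v|^{p/2-1}D_A v\big\|_{L^2}^2+C\|v\|_{L^p}^p .
\]
For the nonlinearity, the top-degree term of $F$ in~\eqref{def F} is $(\beta_1+i\beta_2)|v|^{2m-2}v$ with \emph{constant} coefficient, and since $\beta_1>0$ it produces the coercive term $-\beta_1\|v\|_{L^{2m+p-2}}^{2m+p-2}$; every remaining term has $v$-degree $k\le 2m-2$ and a rough coefficient $\Xi_{\cdot,\cdot}\in\Cc^{-\e}$, and I would bound it, using the product estimate Lemma~\ref{LEM:Besov}(i), the duality between $\Cc^{-\e}$ and $B^\e_{1,1}$, the interpolation inequality Lemma~\ref{LEM:Besov}(v) applied to the product (of pointwise size $|v|^{k+p-1}$), the diamagnetic inequality, and two-dimensional Gagliardo--Nirenberg, by $\|\Xi_{\cdot,\cdot}(t)\|_{\Cc^{-\e}}$ times powers of $\|v\|_{L^p}$ and $\big\||v|^{p/2-1}D_A v\big\|_{L^2}$ in which the exponent of the gradient factor is $\ll 2$ because $\e$ is small --- hence absorbable by Young's inequality into the two good terms at the cost of an extra $C(1+\|\Xi(t)\|_{\Cc^{-\e}}^\kappa)$. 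Collecting and using $\|v\|_{L^p}^p\les 1+\|v\|_{L^{2m+p-2}}^{2m+p-2}$ on the compact $\M$, I expect the differential inequality
\[
\tfrac{d}{dt}\|v(t)\|_{L^p}^p\le-c\,\|v(t)\|_{L^p}^p+C\big(1+\|\Xi(t)\|_{\Cc^{-\e}}^\kappa\big),\qquad t\in[\tau_0,\T),
\]
from which Gr\"onwall's inequality, H\"older in time on $[0,T_0]$, and the choice $q\ge\kappa$ (so $\Xi\in\mathcal Z^{\eps,q}(\R_+)$ controls the forcing) give $B_0$, hence $\T=T_0$; as $T_0$ is arbitrary, $v$ is global.

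Once the $L^p$ bound is available, I would upgrade it to~\eqref{boundY} by letting $\tau_1$ be the uniform local existence time from Proposition~\ref{PROP:LWP2} attached to the bound $\sup_s\|v(s)\|_{L^p}\le B_0$, re-running Proposition~\ref{PROP:LWP2} on $[\max(t-\tau_1,0),t]$ for each $0<t\le T$, and invoking the smoothing estimate Lemma~\ref{LEM:semigroup}(i), which controls $\|v(t)\|_{\Cc^{2\e}}$ in terms of $B_0$, $\|\Xi\|_{\mathcal Z^{\eps,q}_T}$ and (near $t=0$) $\|v_0\|_{\Cc^{s_0}}$. Uniqueness in the class~\eqref{Z3} is inherited from the local statements, and local Lipschitz dependence on $[0,T]$ follows by covering $[0,T]$ by finitely many intervals of length comparable to $\tau_1$ on which the solution maps of Propositions~\ref{PROP:LWP1} and~\ref{PROP:LWP2} are Lipschitz (with constants controlled by the data-continuous a priori bounds) and composing them. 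The hard part is the a priori estimate: checking that~\eqref{condition-p} is precisely the $L^p$-dissipativity threshold for $(\alpha_1+i\alpha_2)\Dla$ and controlling the magnetic subprincipal terms, and --- above all --- absorbing the rough, lower-degree stochastic terms of $F(v,\Xi)$ into the coercive quantities $\|v\|_{L^{2m+p-2}}^{2m+p-2}$ and $\big\||v|^{p/2-1}D_A v\big\|_{L^2}^2$; this is where the precise arithmetic of $m,p,\e,q$ and the two-dimensionality of $\M$ come in.
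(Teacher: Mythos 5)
Your proposal is correct in substance and follows the same overall architecture as the paper: local theory in $\Cc^{s_0}$ (Proposition~\ref{PROP:LWP1}), continuation through the $L^p$ theory and its blow-up alternative (Proposition~\ref{PROP:LWP2}), and an a priori $L^p$ bound obtained by testing against $|v|^{p-2}v$, with the dissipativity of $(\alpha_1+i\alpha_2)\Dla$ in $L^p$ governing the admissible range of $p$ and the defocusing term $\beta_1\int_\M|v|^{p-2+2m}dV$ plus the gradient term absorbing the rough lower-order contributions via $\Cc^{-\eps}$--$B^\eps_{1,1}$ duality, the interpolation of Lemma~\ref{LEM:Besov}~(v), Sobolev and Young; your identification of the upper bound in \eqref{condition-p} with the classical threshold $|\alpha_2|/\alpha_1<2\sqrt{p-1}/(p-2)$ is exactly the non-negativity of the $2\times2$ matrix appearing in the paper's Lemma~\ref{LEM:a priori estimate}. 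Two genuine differences are worth noting. First, you keep the covariant derivative $D_A$ throughout and invoke the diamagnetic inequality, so the magnetic potential never appears as a separate error term; the paper instead expands $D_A=d-iA$, keeps the cross terms $\langle |v|^{p-2}vA,dv\rangle$ and $\langle d^*(vA),|v|^{p-2}v\rangle$ on the right-hand side of \eqref{aprioriestimate}, and absorbs them by Cauchy--Schwarz and Young in \eqref{bddY8}--\eqref{bddY9}. Your route is arguably cleaner, since $D_A(|v|^{p-2}v)=|v|^{p-2}D_Av+v\,d(|v|^{p-2})$ and $d(|v|^2)=\cj v D_Av+v\overline{D_Av}$, so the algebraic identities behind the quadratic-form argument go through verbatim with $\nabla$ replaced by $D_A$. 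Second, and this is the only place where your argument is thinner than the paper's: to justify the energy computation you propose to mollify the solution itself, relying on the bootstrap regularity $v(t)\in\Cc^{s}$, $s<2-\eps$ (so $C^1_x$ but not $C^2_x$, and with $\partial_tv$ only a distribution of regularity $\Cc^{-\eps}$); the "standard mollification argument" then has to handle the duality pairings and the chain rule for $t\mapsto\|v(t)\|_{L^p}^p$ at this low regularity, which is plausible but not automatic. The paper sidesteps this entirely by smoothing the data rather than the solution: it replaces $\Xi$ by $\P_N\Xi$, applies Lemmas~\ref{LEM:a priori estimate} and~\ref{LEM:a priori 2} to the genuinely classical solutions $v^N\in C^1_tC^2_x$, and transfers the uniform $L^p$ bound to $v$ using the local Lipschitz continuity of the solution maps already established in Propositions~\ref{PROP:LWP1} and~\ref{PROP:LWP2}. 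If you adopt that approximation step (or spell out your mollification carefully), the rest of your scheme, including the Gr\"onwall-type conclusion (the paper settles for a linear-in-time bound rather than your dissipative differential inequality, which is immaterial), the upgrade to \eqref{boundY} via Lemma~\ref{LEM:semigroup}~(i), and the gluing of local Lipschitz dependence, matches the paper's proof.
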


The proof of Proposition~\ref{PROP:deterministic} is a straightforward adaptation of the argument given in \cite{MW2,TsW,Trenberth}. We just need to deal with some extra perturbative terms arising from the magnetic part. As in the aforementioned references, we will split the proof of Proposition~\ref{PROP:deterministic} into several technical lemmas.

\begin{lemma} \label{LEM:a priori estimate}
Let $\al_1, \be_1 >0$, $\al_2 \ge 0$ and $\be_2 \in \R$. Fix an integer $m \geq 2$ and $p \geq 1$ satisfying \begin{align}\label{cond a priori}
 2 -  2 \big( r^2 + 2r \sqrt{1 + r^2} \big)  < p < 2 + 2 \big( r^2 + r \sqrt{1 + r^2} \big).
\end{align}
Let $T>0$, $t_0\in (0,T)$, $\eps >0$ and $v$ be a solution to \eqref{model equation} on $[0,T]$, with enhanced data set $(v_0, \Xi)$, belonging to the class $C([0, T]; \Cc^{2\eps}(\M)) \cap C^1_t C^2_x((t_0, T) \times \M)$.\footnote{Here, we endow spaces of the form $C^1_t C^2_x(I \times \M)$, for some open time interval $I$, with the standard H\"older topology.}\footnote{This regularity condition is purely technical and ensures that $v$ is a classical solution on $(t_0, T)$ so that the computations in the following proofs are valid.} Then, there exists $0 < \eta \ll 1$ such that the following \textit{a priori} bound holds:
\begin{align}\label{aprioriestimate}
\begin{split}
&\frac{1}{p} \Big( \big\| v(t) \big\|_{L^p}^p -  \big\| v(t_0) \big\|_{L^p} ^p \Big)  + \beta_1\int_{t_0}^t \int_\M |v(t')|^{p-2 + 2m}dVdt'\\
&\qquad\qquad +\alpha_1\int_{t_0}^t\int_\M|v(t')|^p|A|^2dVdt' + 4 \eta \alpha_1 \int_{t_0}^t \int_\M  |v|^{p -2}(t')| \nb v|^2(t')dV dt'  \\
&\leq  \int_{t_0}^t \big| \langle F_0(v, \Xi), |v|^{p-2}v \rangle_{L^2(\M)} \big| (t') dt' +|\alpha_1+i\alpha_2|\int_{t_0}^t \big|\langle |v|^{p-2}v A,dv\rangle_{L^2(T^*\M)}\big|(t')dt'\\
&\qquad\qquad\qquad\qquad+|\alpha_1+i\alpha_2|\int_{t_0}^t \big|\langle d^*(vA),|v|^{p-2}v\rangle_{L^2(\M)}\big|(t')dt'
\end{split}
\end{align}
for all $t\in [t_0,T]$, where 
\begin{align}\label{F0}
F_0(v,\Xi) := (\beta_1 + i \beta_2) \sum_{\substack{ 0 \leq j_1 \leq m \\  0 \leq j_2 \leq m-1 \\ (j_1,j_2) \neq (m,m-1) }} \binom{m}{j_1} \binom{m-1}{j_2} v^{j_1} \cj{v}^{j_2}  \Xi_{m-j_1,m-j_2-1}. 
\end{align}
\end{lemma}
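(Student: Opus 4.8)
The plan is to carry out the classical $L^p$ energy estimate for complex Ginzburg--Landau equations (as in \cite{MW2,TsW,Trenberth}), adapted to the magnetic Laplacian. The regularity hypothesis on $v$ makes the manipulations below, which treat $v$ as a classical solution on $(t_0,T)$, legitimate (for $1\le p<2$ one first replaces $|v|^{p-2}$ by a smooth regularisation of it and passes to the limit at the end by dominated convergence and Fatou). Testing \eqref{model equation} against $|v|^{p-2}\overline v$ and taking real parts, with $\partial_{t'}(|v|^p)=p|v|^{p-2}\Re(\overline v\,\partial_{t'}v)$, one gets for $t'\in(t_0,T)$
\begin{align*}
\frac1p\frac{d}{dt'}\|v(t')\|_{L^p}^p=\Re\big\langle(\alpha_1+i\alpha_2)\Dla v,|v|^{p-2}v\big\rangle_{L^2(\M)}-\Re\big\langle F(v,\Xi),|v|^{p-2}v\big\rangle_{L^2(\M)}.
\end{align*}
The proof then amounts to isolating, in each of the two terms on the right, the coercive contributions (sent to the left of \eqref{aprioriestimate}) from the perturbative ones (kept on the right and bounded by absolute values), then integrating in $t'$.

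In the nonlinear term I would write $F(v,\Xi)=F_0(v,\Xi)+(\beta_1+i\beta_2)|v|^{2m-2}v$, the last summand being the $(j_1,j_2)=(m,m-1)$ contribution in \eqref{def F}, for which the stochastic datum $\Xi_{0,0}$ equals $1$. Since $\langle|v|^{2m-2}v,|v|^{p-2}v\rangle_{L^2(\M)}=\int_\M|v|^{p-2+2m}\,dV$ is real and nonnegative, one has $-\Re\langle(\beta_1+i\beta_2)|v|^{2m-2}v,|v|^{p-2}v\rangle_{L^2(\M)}=-\beta_1\int_\M|v|^{p-2+2m}\,dV$, which, as $\beta_1>0$, is the defocusing gain placed on the left; the remainder is $-\Re\langle F_0(v,\Xi),|v|^{p-2}v\rangle_{L^2(\M)}$, bounded by $|\langle F_0(v,\Xi),|v|^{p-2}v\rangle_{L^2(\M)}|$, which goes on the right of \eqref{aprioriestimate}.

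For the magnetic Laplacian I would integrate by parts, $\langle\Dla v,|v|^{p-2}v\rangle_{L^2(\M)}=-\langle D_Av,D_A(|v|^{p-2}v)\rangle_{L^2(T^*\M)}$ with $D_A=d-iA$, and expand
\begin{align*}
\langle D_Av,D_A(|v|^{p-2}v)\rangle&=\langle dv,d(|v|^{p-2}v)\rangle+\int_\M|A|^2|v|^p\,dV\\
&\quad+i\big(\langle dv,|v|^{p-2}vA\rangle-\langle vA,d(|v|^{p-2}v)\rangle\big).
\end{align*}
Taking $\Re[(\alpha_1+i\alpha_2)\cdot]$, the $\int_\M|A|^2|v|^p$ term contributes $\alpha_1\int_\M|v|^p|A|^2\,dV\ge0$ on the left, while the bracketed cross terms contribute at most $|\alpha_1+i\alpha_2|$ times $|\langle|v|^{p-2}vA,dv\rangle_{L^2(T^*\M)}|+|\langle d^*(vA),|v|^{p-2}v\rangle_{L^2(\M)}|$ (after rewriting $\langle vA,d(\cdot)\rangle_{L^2(T^*\M)}=\langle d^*(vA),\cdot\rangle_{L^2(\M)}$), which are precisely the magnetic terms on the right of \eqref{aprioriestimate}. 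For the principal diffusion, writing $v=|v|e^{i\theta}$ (or, to bypass the zero set of $v$, using $\Re(\overline v\nabla v)=|v|\nabla|v|$ together with $\Im(\overline v\nabla v)$), one computes
\begin{align*}
\langle dv,d(|v|^{p-2}v)\rangle=\int_\M\big[(p-1)|v|^{p-2}|\nabla|v||^2+|v|^p|\nabla\theta|^2\big]\,dV+i(p-2)\int_\M|v|^{p-1}\langle\nabla|v|,\nabla\theta\rangle\,dV,
\end{align*}
so that, using $|\nabla v|^2=|\nabla|v||^2+|v|^2|\nabla\theta|^2$,
\begin{align*}
&\alpha_1\Re\langle dv,d(|v|^{p-2}v)\rangle-\alpha_2\Im\langle dv,d(|v|^{p-2}v)\rangle\\
&\quad=\alpha_1\int_\M\big[(p-1)|v|^{p-2}|\nabla|v||^2+|v|^p|\nabla\theta|^2\big]\,dV\\
&\qquad\quad-\alpha_2(p-2)\int_\M|v|^{p-1}\langle\nabla|v|,\nabla\theta\rangle\,dV.
\end{align*}
Bounding the cross term by Young's inequality turns the right-hand side into the quadratic form of $\left(\begin{smallmatrix}\alpha_1(p-1)&-\alpha_2(p-2)/2\\-\alpha_2(p-2)/2&\alpha_1\end{smallmatrix}\right)$ evaluated at $(|v|^{(p-2)/2}\nabla|v|,\,|v|^{p/2}\nabla\theta)$; in the range of $p$ where this matrix is positive definite — which is the content of \eqref{cond a priori} — there is $0<\eta\ll1$ with
\begin{align*}
\alpha_1\Re\langle dv,d(|v|^{p-2}v)\rangle-\alpha_2\Im\langle dv,d(|v|^{p-2}v)\rangle\ge4\eta\alpha_1\int_\M|v|^{p-2}|\nabla v|^2\,dV,
\end{align*}
which supplies the last term on the left of \eqref{aprioriestimate}. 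Assembling all these identities and bounds and integrating over $[t_0,t]$ produces \eqref{aprioriestimate}.

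The only step that is not pure bookkeeping is the pointwise algebraic identity for $\langle dv,d(|v|^{p-2}v)\rangle$ combined with the coercivity of the associated quadratic form: this is where the dissipation-to-dispersion constraint enters, and it is the crux of the entire global argument (it is the reason \eqref{cond a priori}, hence ultimately \eqref{condition on r}, is imposed). The magnetic contributions are, by contrast, harmless — $\Dla-\Dlg$ being a first-order differential operator with smooth coefficients on the compact surface $\M$ — and the case $p<2$ only requires the routine regularisation of $|v|^{p-2}$ mentioned above.
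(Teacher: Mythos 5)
Your proposal is correct and takes essentially the same route as the paper: testing with $|v|^{p-2}\cj{v}$, splitting $F$ into the defocusing monomial plus $F_0$, expanding $\langle D_A v, D_A(|v|^{p-2}v)\rangle$ into the $|A|^2$, cross and gradient parts, and extracting coercivity of the gradient part from positivity of a $2\times 2$ quadratic form; your polar-coordinate computation in $\big(|v|^{(p-2)/2}\nabla|v|,\,|v|^{p/2}\nabla\theta\big)$ is exactly the paper's identities in the variables $Y=\nabla(|v|^2)=2\Re(\cj{v}\nabla v)$ and $X=i(\cj{v}\nabla v-v\nabla\cj{v})=-2\Im(\cj{v}\nabla v)$, including your remark on bypassing the zero set of $v$. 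The only (immaterial) discrepancy is that your positivity condition reads $(p-2)^2<4(p-1)r^2$, i.e.\ $2+2r^2-2r\sqrt{1+r^2}<p<2+2r^2+2r\sqrt{1+r^2}$, whose lower bound differs from the one written in \eqref{cond a priori}, but since $p>2m-1\ge 3$ in the application only the (identical) upper bound is ever active.
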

\begin{proof}
 We adapt the proof of \cite[Proposition 18]{MW2} as in \cite{TsW,Trenberth}. In view of the regularity of $v \in C^1_t C^2_x((t_0, T) \times \M)$, we can compute
\begin{align*}
&\frac{1}{p} \frac{d}{dt} \big\| v(t) \big\|_{L^p}^p\\
 &  = \Re\langle \dt v,|v|^{p-2} v\rangle_{L^2(\M)}\\
& = \Re\langle (\alpha_1 + i\alpha_2)\Dla v -F(v, \Xi),|v|^{p-2}v\rangle_{L^2(\M)}
\end{align*}
for each $t_0 < t < T$. Now by the definition \eqref{Delta} of the magnetic Laplace-Beltrami operator, we can continue with
\begin{align}
\begin{split}
& \frac{1}{p} \frac{d}{dt} \big\| v(t) \big\|_{L^p}^p\\
&= - \Re\langle (\alpha_1+i\alpha_2) D_Av,D_A\big[|v|^{p-2}v\big]\rangle_{L^2(T^*\M)}+\Re\langle F(v, \Xi),|v|^{p-2}v\rangle_{L^2(\M)}\\
&=- \Re\langle (\alpha_1+i\alpha_2) \nabla v,\nabla\big[|v|^{p-2}v\big]\rangle_{L^2(T\M)}- \Re\langle (\alpha_1+i\alpha_2) vA,|v|^{p-2}v A\rangle_{L^2(T^*\M)}\\
&\qquad - \Re \langle (i\alpha_1-\alpha_2) d v,|v|^{p-2}v A\rangle_{L^2(T^*\M)}-\Re \langle (i\alpha_1-\alpha_2) d^*(vA),|v|^{p-2}v\rangle_{L^2(\M)}\\
&\qquad -\Re\langle F(v, \Xi),|v|^{p-2}v\rangle_{L^2(\M)}.
\end{split}
\label{apriori1}
\end{align}
The second term in the right-hand side of \eqref{apriori1} gives a contribution in the left-hand side of \eqref{aprioriestimate}, while the third and fourth terms are found in the right-hand side of \eqref{aprioriestimate}.

As for the first term, we use the Leibniz rule for vector fields and the formula $\Re(z) = \frac12 (z + \cj z)$, for $z \in \C$, as follows.
\begin{align}
&\Re \langle (\alpha_1+i\alpha_2) \nabla v,\nabla\big[|v|^{p-2}v\big]\rangle_{L^2(T\M)} = \Re \langle (\alpha_1+i\alpha_2) \nabla v,\nabla\big[(v \cj v)^{\frac{p-2}{2}}v\big]\rangle_{L^2(T\M)}\notag\\
&\qquad = \frac{p}{2} \alpha_1 \langle |v|^{p-2}  \nb v,\nb v\rangle_{L^2(T\M)}\notag\\
&\qquad\qquad + \frac{p-2}{4} \alpha_1 \Big(\langle |v|^{p-4} \nb v,v^2\nb \vb\rangle_{L^2(T\M)}  + \langle |v|^{p-4} v^2  \nb\vb,\nb v\rangle_{L^2(T\M)}\Big)  \label{apriori2} \\
& \qquad\qquad \qquad   +i \frac{p-2}{4} \alpha_2   \Big(\langle |v|^{p-4}  \nb v,v^2\nb \vb\rangle_{L^2(T\M)} - \langle |v|^{p-4}  v^2 \nb \vb,\nb v\rangle_{L^2(T\M)}\Big).       \notag
\end{align}
To further analyse the terms above, we will use the following algebraic identities, which follow from straightforward computations as in the Euclidean case, using the expression $\langle X,Y\rangle = \gm_{j,k}X^j\cj{Y}^k$ in local coordinates for the inner product on vector fields:
\begin{align}
\begin{split}
 & v^2 \langle \nb \vb,\nb v\rangle_{T\M} + \vb^2 \langle \nb v,\nb\vb\rangle_{T\M} \\
 & \qquad \qquad =  \langle v \nb \vb - \vb \nb v,\vb \nb v - v \nb \vb\rangle_{T\M} + 2 |v|^2 \langle\nb v ,\nb v\rangle_{T\M},
\end{split}
 \label{Yeq1}
 \end{align}
 \begin{align}
 \vb ^2 \langle\nb v, \nb \vb\rangle_{T\M} - v^2 \langle \nb \vb,\nb v\rangle_{T\M}= \langle\nb( |v|^2),  v \nb \vb - \vb \nb v \rangle_{T\M},
 \label{Yeq2}
 \end{align}
 and
 \begin{align}
4 |v|^2 \langle\nb v,\nb v\rangle_{T\M} =  \langle \nb (|v|^2),\nb (|v|^2)\rangle_{T\M} - \langle v \nb \vb - \vb \nb v,\vb \nb v - v \nb \vb\rangle_{T\M}.
\label{Yeq3}
\end{align}

By using \eqref{Yeq1} and \eqref{Yeq2}, we can thus write \eqref{apriori2} as
\begin{align}
\begin{split}
& \Re \langle (\alpha_1+i\alpha_2) \nabla v,\nabla\big[|v|^{p-2}v\big]\rangle_{L^2(T\M)}   \\
&  =  (p-1)\alpha_1 \langle |v|^{p-2}  \nb v ,\nb v\rangle_{L^2(T\M)} + \frac{p-2}{4} \alpha_2 i \langle |v|^{p-4}  \nb (|v|^2) ,v \nb \vb - \vb \nb v\rangle_{L^2(T\M)} \\
&  \qquad \quad + \frac{p-2}{4} \al_1 \langle|v|^{p-4}  (v \nb \vb - \vb \nb v)  ,\vb\nb v - v \nb \vb \rangle_{L^2(T\M)}.
\end{split}
\label{Yeq4}
\end{align}
Fix $\eta >0$. By \eqref{Yeq3}, we have
\begin{align}
\begin{split}
& (p-1)\alpha_1 \langle|v|^{p-2}  \nb v ,\nb v\rangle_{L^2(T\M)} \\
& = 4 \eta \al_1 \langle|v|^{p-2} nb v ,\nb v\rangle_{L^2(T\M)} + \Big( \frac{p-1}{4} - \eta \Big) \al_1 \langle |v|^{p-4} \big( 4 |v|^2  \nb v\big) ,\nb v\rangle_{L^2(T\M)} \\
& = 4 \eta \al_1 \langle |v|^{p-2} \nb v ,\nb v\rangle_{L^2(T\M)}+ \Big(\frac{p-1}{4} - \eta \Big) \alpha_1\langle|v|^{p-4}  \nb (|v|^2),\nb (|v|^2)\rangle_{L^2(T\M)} \\
& \qquad \quad - \Big(\frac{p-1}{4} - \eta \Big)\alpha_1 \langle |v|^{p-4} ( v \nb \vb - \vb \nb v) ,\vb \nb v - v \nb \vb \rangle_{L^2(T\M)}.
\end{split}
\label{Yeq5}
\end{align}
Thus, combining \eqref{Yeq4} and \eqref{Yeq5} yields
\begin{align}
\begin{split}
& \Re \langle (\alpha_1+i\alpha_2) \nabla v,\nabla\big[|v|^{p-2}v\big]\rangle_{L^2(T\M)} \\
& \qquad \qquad =  4 \eta \alpha_1 \langle|v|^{p-2}\nb v,\nb v\rangle_{L^2(T\M)} + \int_{\M} |v|^{p-4} B_{p, \eta}(X,Y) dV,
\end{split}
\label{apriori3}
\end{align}
where we define the vector fields
\[ X = i \big( \vb \nb v - v \nb \vb \big) \quad , \quad Y = \nb (|v|^2),\]
and $B_{p, \eta}(X,Y)$ is the quadratic form on the space of vector fields defined as
\[  B_{p, \eta}(X,Y) = \Big( \frac{1}{4} - \eta \Big)\alpha_1 \langle X,X\rangle_{T\M)} + \frac{p-2}{4} \alpha_2 \langle X,Y\rangle_{T\M} + \Big( \frac{p-1}{4} - \eta \Big) \alpha_1 \langle Y,Y\rangle_{T\M}. \]
First, we note that since $X$ and $Y$ are real-valued, so is $B_{p, \eta}(X,Y)$. If $\alpha_2=0$, then the form $B_{p, \eta}(X,Y)$ takes non-negative values for $\eta$ small enough. Otherwise, $\al_2 >0$ and the quadratic form takes non-negative values if and only if the matrix
\[  \begin{pmatrix}
 \Big( \frac{1}{4} - \eta \Big)r  & \frac{p-2}{8} \\ \frac{p-2}{8}& \ \ \Big( \frac{p-1}{4} - \eta \Big)\end{pmatrix} \]
is non-negative definite, which is the case under \eqref{cond a priori} for $\eta$ small enough. Going back to \eqref{apriori3}, this yields
\begin{align}
- \Re(\alpha_1+i\alpha_2)\langle \nb v,\nb\big[|v|^{p-2}v\big]\rangle_{L^2(T\M)}\le -4\eta\alpha_1\int_\M|v|^{p-2}|\nb v|^2dV
\label{Yeq6}
\end{align}
for some $\eta>0$ small enough. 

Finally, for the last term on the right-hand side of \eqref{apriori1}, we simply note by \eqref{def F} and \eqref{F0}, we have that
\begin{align}
F(v,\Xi)=(\beta_1+i\beta_2)|v|^{2m-2}v+F_0(v,\Xi).
\label{Yeq7}
\end{align}
Hence, \eqref{aprioriestimate} follows from \eqref{apriori1}, \eqref{Yeq6}, \eqref{Yeq7} and integrating in time.
\end{proof}
The next result provides a control for the growth in time of the $L^p$ norm. 
\begin{lemma}\label{LEM:a priori 2} 
Let $\al_1, \be_1>0$, $\al_2 \ge 0$ and $\be_2 \in \R$. Fix an integer $m \ge 2$ and $p\ge 1$ satisfying \eqref{cond a priori}. Let $T>0$, $t_0 \in (0,T]$, $\eps >0$ and $v$ be a solution to \eqref{model equation} on $[t_0,T]$, with enhanced data set $(v_0,\Xi) \in \D'(\M) \times \mathcal Z^{\eps, q}([0,T])$, belonging to the class $C([t_0, T]; \Cc^{2\eps}(\M)) \cap C^1_t C^2_x((t_0, T) \times \M)$. Then, there exist paremeters $0<\eps = \eps(m,p)\ll1$ and $1 \le q = q(p,m) < \infty$ depending only on $(p,m)$ and a constant $C = C\big(m,p,A)>0$ depending only on $(p,m,A)$ such that we have
\[ \| v(t)  \|_{L^p} \leq \| v(t_0)  \|_{L^p} + C \cdot \big((t-t_0)+\|\Xi\|_{\mathcal Z_T^{\eps, q}}^q\big)\] 
for all $t \in [t_0,T]$
\end{lemma}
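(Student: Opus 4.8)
The plan is to start from the a priori inequality \eqref{aprioriestimate} of Lemma~\ref{LEM:a priori estimate} and to absorb each of the three terms on its right-hand side into the dissipative quantities $\beta_1\int_{t_0}^t\int_\M|v|^{p+2m-2}$, $\alpha_1\int_{t_0}^t\int_\M|v|^p|A|^2$ and $4\eta\alpha_1\int_{t_0}^t\int_\M|v|^{p-2}|\nabla v|^2$ appearing on its left-hand side, leaving an error controlled by $(t-t_0)$ plus a fixed power of $\|\Xi\|_{\mathcal Z^{\e,q}_T}$. Since $v$ is only $C^1_tC^2_x$ on the \emph{open} interval $(t_0,T)$, the computations of Lemma~\ref{LEM:a priori estimate} and the absorption are carried out on $(t_0,T)$, and the resulting bound is extended to $[t_0,T]$ using the continuity of $t\mapsto\|v(t)\|_{L^p}$ on $[t_0,T]$ (a consequence of $v\in C([t_0,T];\Cc^{2\e}(\M))$). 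This follows the scheme of \cite{MW2,TsW,Trenberth}, the magnetic terms being the only substantive addition.

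The magnetic terms are easy. For $\langle|v|^{p-2}vA,dv\rangle_{L^2(T^*\M)}$ one bounds $\big|\langle|v|^{p-2}vA,dv\rangle\big|\les\int_\M|v|^{p-1}|A|\,|\nabla v|\,dV$ and splits by Cauchy--Schwarz into $\delta\int_\M|v|^{p-2}|\nabla v|^2\,dV+C_\delta\int_\M|v|^p|A|^2\,dV$, both pieces being absorbed into the last two dissipative quantities once $\delta$ is small (in terms of $\eta\alpha_1/|\alpha_1+i\alpha_2|$ and $\alpha_1/|\alpha_1+i\alpha_2|$). For $\langle d^*(vA),|v|^{p-2}v\rangle_{L^2(\M)}=\langle vA,d(|v|^{p-2}v)\rangle_{L^2(T^*\M)}$ one uses $\big|d(|v|^{p-2}v)\big|\les|v|^{p-2}|\nabla v|$ and the boundedness of $A$ on the compact $\M$ to reduce again to $\int_\M|v|^{p-1}|A|\,|\nabla v|\,dV$.

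The core is the term $\langle F_0(v,\Xi),|v|^{p-2}v\rangle_{L^2(\M)}$. By \eqref{F0} it is a finite sum of contributions $\int_\M v^{j_1}\bar v^{j_2}\,\Xi_{m-j_1,m-j_2-1}\,\overline{|v|^{p-2}v}\,dV$ with $k:=j_1+j_2\le2m-2$, the inequality being strict (so $k+p-1<p+2m-2$) precisely because $(j_1,j_2)\neq(m,m-1)$. For each such contribution I pair $\Xi_{m-j_1,m-j_2-1}(t')\in\Cc^{-\e}(\M)$ against $w:=v^{j_1}\bar v^{j_2+1}|v|^{p-2}$ by $\Cc^{-\e}$--$\Bb^{\e}_{1,1}$ duality, so that it is $\les\|\Xi(t')\|_{\Cc^{-\e}}\|w\|_{\Bb^{\e}_{1,1}}$, and apply Lemma~\ref{LEM:Besov}\,\textup{(v)}:
\begin{align*}
\|w\|_{\Bb^{\e}_{1,1}}\les\|w\|_{L^1}^{1-\e}\,\|\nabla w\|_{L^1}^{\e}+\|w\|_{L^1},
\end{align*}
where $\|w\|_{L^1}=\|v\|_{L^{k+p-1}}^{k+p-1}$ and, since $|\nabla w|\les_{k,p}|v|^{k+p-2}|\nabla v|$, Cauchy--Schwarz gives $\|\nabla w\|_{L^1}\les\big(\int_\M|v|^{p-2}|\nabla v|^2\big)^{1/2}\|v\|_{L^{2k+p-2}}^{k+p/2-1}$. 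As $2k+p-2$ may exceed $p+2m-2$ when $m\ge3$, the norm $\|v\|_{L^{2k+p-2}}$ is controlled by the two-dimensional Gagliardo--Nirenberg inequality applied to $|v|^{p/2}$, together with $\|\nabla(|v|^{p/2})\|_{L^2}^2\les_p\int_\M|v|^{p-2}|\nabla v|^2$, which bounds every $L^s(\M)$-norm of $v$ by a controlled power of $\int_\M|v|^{p-2}|\nabla v|^2$ and $\|v\|_{L^p}$; the norms $\|v\|_{L^s}$ with $s\le p+2m-3$ are bounded trivially by $\|v\|_{L^{p+2m-2}}\les\big(\int_\M|v|^{p+2m-2}\big)^{1/(p+2m-2)}$. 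Inserting these and using Young's inequality twice --- first to absorb the small factor $\big(\int_\M|v|^{p-2}|\nabla v|^2\big)^{\e/2}$ into $\int_\M|v|^{p-2}|\nabla v|^2$, then to absorb the remaining power of $\int_\M|v|^{p+2m-2}$, which is $<1$ thanks to $k+p-1<p+2m-2$ --- one obtains, for a suitable $\theta_{j_1,j_2}=\theta_{j_1,j_2}(m,p)<\infty$,
\begin{align*}
\Big|\int_\M v^{j_1}\bar v^{j_2}\Xi_{m-j_1,m-j_2-1}\overline{|v|^{p-2}v}\,dV\Big|\les\delta\Big(\int_\M|v|^{p-2}|\nabla v|^2+\int_\M|v|^{p+2m-2}\Big)+C_\delta\big(1+\|\Xi(t')\|_{\Cc^{-\e}}^{\theta_{j_1,j_2}}\big).
\end{align*}
This is the only delicate computation and the step I expect to be the main obstacle: condition \eqref{cond a priori} is used precisely here, being what makes $\int_\M|v|^{p-2}|\nabla v|^2$ a left-hand side term in Lemma~\ref{LEM:a priori estimate}, while the strict degree inequality is what keeps the final power of $\int_\M|v|^{p+2m-2}$ strictly below $1$.

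Collecting the three estimates into \eqref{aprioriestimate}, choosing $\delta$ small to absorb the $\delta$-terms into the left-hand side (using also $\int_\M|v|^p\les\delta\int_\M|v|^{p+2m-2}+C_\delta$ on the compact $\M$), and setting $q:=1\vee\max_{j_1,j_2}\theta_{j_1,j_2}$ (depending only on $(m,p)$), one reaches, for $t\in(t_0,T)$,
\begin{align*}
\|v(t)\|_{L^p}^p\le\|v(t_0)\|_{L^p}^p+C\int_{t_0}^t\big(1+\|\Xi(t')\|_{\Cc^{-\e}}^{q}\big)\,dt',\qquad C=C(m,p,A).
\end{align*}
Since $\int_{t_0}^t\|\Xi(t')\|_{\Cc^{-\e}}^{\theta}\,dt'\le(t-t_0)^{1-\theta/q}\|\Xi\|_{L^q_T\Cc^{-\e}}^{\theta}\le(t-t_0)+\|\Xi\|_{\mathcal Z^{\e,q}_T}^{q}$ for every $1\le\theta\le q$ (Hölder in time, then Young), the right-hand side is $\les(t-t_0)+\|\Xi\|_{\mathcal Z^{\e,q}_T}^{q}$; taking $p$-th roots, using $(a+b)^{1/p}\le a^{1/p}+b^{1/p}$, and passing to $[t_0,T]$ by continuity then yields the stated bound.
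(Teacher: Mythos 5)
Your proposal follows essentially the same route as the paper's proof: start from \eqref{aprioriestimate}, estimate the $F_0$-pairing by $\Cc^{-\e}$--$\Bb^{\e}_{1,1}$ duality, the interpolation bound of Lemma~\ref{LEM:Besov}~(v), Cauchy--Schwarz for the gradient factor, the 2D Sobolev/Gagliardo--Nirenberg embedding applied to $|v|^{p/2}$ (needed since $2(j_1+j_2)+p-2$ can exceed $p+2m-2$), and two applications of Young's inequality, using $j_1+j_2\le 2m-2$ to keep the exponent of $\int|v|^{p+2m-2}$ strictly below $1$; then H\"older and Young in time for the powers of $\|\Xi\|_{\Cc^{-\e}}$. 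This is precisely the paper's argument, including the choice of $q$ as the largest exponent produced by the Young steps.

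One step is mis-stated, though the repair is already in your own write-up. For the magnetic cross term you split $\int_\M|v|^{p-1}|A||\nabla v|\,dV\le\delta\int_\M|v|^{p-2}|\nabla v|^2\,dV+C_\delta\int_\M|v|^p|A|^2\,dV$ and claim both pieces are absorbed into the left-hand side of \eqref{aprioriestimate} ``once $\delta$ is small''. This cannot work as written: absorbing the first piece into $4\eta\alpha_1\int|v|^{p-2}|\nabla v|^2$ forces $\delta\les\eta\alpha_1/|\alpha_1+i\alpha_2|\ll1$, while absorbing the second into $\alpha_1\int|v|^p|A|^2$ requires $|\alpha_1+i\alpha_2|C_\delta\le\alpha_1$ with $C_\delta\sim\delta^{-1}$, i.e.\ $\delta\gtrsim1$; the two requirements are incompatible since $\eta$ may be arbitrarily small near the endpoints of \eqref{cond a priori}. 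The paper instead bounds $C_\delta\int|v|^p|A|^2\le C_\delta\|A\|_{L^\infty}^2\int|v|^p$ and uses H\"older plus Young to push this into $\delta_1\int|v|^{p+2m-2}+C(\delta_1,\delta_2,\|A\|_{L^\infty},p,m)$ --- exactly the inequality $\int_\M|v|^p\les\delta\int_\M|v|^{p+2m-2}+C_\delta$ that you invoke in your final collection step, so the argument closes once that piece is routed there rather than into the $\alpha_1\int|v|^p|A|^2$ term. Finally, your conversion from the $L^p$-power bound to the stated inequality by taking $p$-th roots yields $\|v(t_0)\|_{L^p}+C\big((t-t_0)+\|\Xi\|_{\mathcal Z^{\e,q}_T}^q\big)^{1/p}$ rather than the literal statement; the paper glosses over the same point, and either form suffices for the globalization argument, so this is immaterial.
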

\begin{proof}
Let the parameters $\al_1, \al_2, \be_1, \be_2, m, T, t_0, p, q, \eps$ and the data $(v_0, \Xi)$ be as in the statement. Our strategy is standard and amounts to showing that the terms on the right-hand-side of \eqref{aprioriestimate} may estimated by a small fraction of the (positive) left-hand-side and another term independent of $v$.

We begin by estimating the first term on the right-hand-side of \eqref{aprioriestimate}. By the definition of $F_0$ \eqref{F0}, we have
\begin{align}
\begin{split}
&\big| \int_{t_0}^t\langle F_0(v, \Xi)(t'), |v|^{p-2}v(t') \rangle_{L^2(\M)}dt' \big|\\
& \les \sum_{\substack{ 0 \leq j_1 \leq m \\  0 \leq j_2 \leq m-1 \\ (j_1,j_2) \neq (m,m-1) }} \int_{t_0}^t\big|\langle v^{j_1}(t') \cj{v}^{j_2}(t')  \Xi_{m-j_1,m-j_2-1}(t'),|v|^{p-2}v(t')\rangle_{L^2(\M)}\big|dt'.
\end{split}
\label{bddY0}
\end{align}

Fix $0\le j_1\le m,0\le j_2\le m-1$ with $j_1+j_2\le 2m-2$ and $0<\eps\ll1$ to be chosen later. In what follows, we estimate the integrand on the right-hand-side of \eqref{bddY0} while keeping its dependence in $t_0 \le t' \le t$ implicit to ease our notations. By duality, the product rule of Lemma~\ref{LEM:Besov}~(iv), the interpolation estimate of Lemma~\ref{LEM:Besov}~(v), and the Leibniz rule, we have
\begin{align}
\begin{split}
&\big|\langle v^{j_1}\cj{v}^{j_2}  \Xi_{m-j_1,m-j_2-1},|v|^{p-2}v\rangle_{L^2}\big|\\
&\qquad\le \|\Xi_{m-j_1,m-j_2-1}\|_{\Cc^{-\eps}}\big\|v^{\frac{p}2+j_1}\cj{v}^{\frac{p}2-1+j_2}\big\|_{B^{\eps}_{1,1}}\\
&\qquad\les \|\Xi_{m-j_1,m-j_2-1}\|_{\Cc^{-\eps}}\bigg\{\Big(\int_\M|v|^{p-1+j_1+j_2}dV\Big)^{(1-\eps)}\\
&\qquad\qquad\qquad\qquad \times \Big(\int_\M |v|^{p-2+j_1+j_2}|\nabla v|(t)dV\Big)^{\eps} + \int_\M|v|^{p-1+j_1+j_2}dV\bigg\}.
\end{split}
\label{bddY1}
\end{align}
By H\"older's inequality, we have that
\begin{align}
\bigg(\int_\M|v|^{p-1+j_1+j_2}dV\bigg)\les \bigg(\int_\M|v|^{p-2+2m}dV\bigg)^{\frac{p-1+j_1+j_2}{p-2+2m}}
\label{bddY2}
\end{align}
and by Cauchy-Schwarz inequality,
\begin{align}
\begin{split}
& \bigg(\int_\M |v|^{p-2+j_1+j_2}|\nabla v|dV\Big)^{\eps} \\
& \qquad \qquad \qquad \les \bigg(\int_\M|v|^{p-2}|\nabla v|^2dV\bigg)^{\frac\eps2}\Big(\int_\M |v|^{p-2+2(j_1+j_2)}dV\bigg)^{\frac\eps2}.
\end{split}
\label{bddY2b}
\end{align}
We estimate the last integral above, by Sobolev's inequality as follows:
\begin{align}
\begin{split}
&\bigg(\int_\M |v|^{p-2+2(j_1+j_2)}dV\bigg)^{\frac\eps2}\\
&\qquad=\|v^{\frac{p}2}\|_{L^{\frac2p(p-2+2(j_1+j_2))}}^{\frac\eps{p}(p-2+2(j_1+j_2))} \les \|v^\frac{p}2\|_{H^1}^{\frac\eps{p}(p-2+2(j_1+j_2))}\\
&\qquad\les \bigg(\int_\M|v|^pdV\Big)^{\frac\eps{2p}(p-2+2(j_1+j_2))}+\Big(\int_\M |v|^{p-2}|\nabla v|^2dV\bigg)^{\frac\eps{2p}(p-2+2(j_1+j_2))}.
\end{split}
\label{bddY3}
\end{align}
Hence, combining \eqref{bddY1}, \eqref{bddY2}, \eqref{bddY2b} and \eqref{bddY3} together yields
\begin{align}
\begin{split}
&\big|\langle v^{j_1} \cj{v}^{j_2} \Xi_{m-j_1,m-j_2-1},|v|^{p-2}v\rangle_{L^2}\big|\\
&\les \|\Xi_{m-j_1,m-j_2-1}\|_{\Cc^{-\eps}}\bigg\{\bigg(\int_\M|v|^{p-2}|\nabla v|^2dV\bigg)^{\frac{p-1+j_1+j_2}{p-2+2m}}\\
&\hspace{3.2mm}+\bigg(\int_\M|v|^{p-2+2m}dV\bigg)^{(1-\eps)\frac{p-1+j_1+j_2}{p-2+2m}+\frac\eps2\frac{p-2+2(j_1+j_2)}{p-2+2m}}\bigg(\int_\M|v|^{p-2}|\nabla v|^2dV\bigg)^{\frac\eps2}\\
&\hspace{3.2mm} +\bigg(\int_\M|v(t)|^{p-2+2m}dV\bigg)^{(1-\eps)\frac{p-1+j_1+j_2}{p-2+2m}}\bigg(\int_\M|v|^{p-2}|\nabla v|^2dV\bigg)^{\frac\eps2+\frac\eps{2p}(p-2+2(j_1+j_2))}\bigg\}.
\end{split}
\label{bddY4}
\end{align}

Now, by using Young's inequality twice and since $j_1 + j_2 \le 2m-2$, we can find $0<\eps\ll1$ small enough such that we have
\begin{align}
\begin{split}
&\|\Xi_{m-j_1,m-j_2-1}(t)\|_{\Cc^{-\eps}}\bigg(\int_\M|v(t)|^{p-2+2m}dV\bigg)^{(1-\eps)\frac{p-1+j_1+j_2}{p-2+2m}+\frac\eps2\frac{p-2+2(j_1+j_2)}{p-2+2m}}\\
&\qquad\qquad\qquad\qquad\qquad \times\bigg(\int_\M|v|^{p-2}|\nabla v|^2(t)dV\bigg)^{\frac\eps2}\\
&\le \delta_1\int_\M|v(t)|^{p-2+2m}dV+C(\delta_1)\|\Xi_{m-j_1,m-j_2-1}\|_{\Cc^{-\eps}}^{q'_1}\bigg(\int_\M|v|^{p-2}|\nabla v|^2dV\bigg)^{\frac\eps2q_1'}\\
&\le \delta_1\int_\M|v|^{p-2+2m}dV+\delta_2\int_\M|v|^{p-2}|\nabla v|^2(t)dV+ C(\delta_1,\delta_2)\|\Xi_{m-j_1,m-j_2-1}\|_{\Cc^{-\eps}}^{q_1'r_1'}
\end{split}
\label{bddY5}
\end{align}
for arbitrary $\delta_1,\delta_2>0$ and where 
\[q_1=\frac{p-2+2m}{(1-\eps)(p-1+j_1+j_2)+\frac\eps2(p-2+2(j_1+j_2))}\] 
and $r_1=\frac2{\eps q'}$. In the above, $C(\delta_1,\delta_2)>0$ is a large constant depending on $p$, $m$, and $\eps$. 

Similarly, provided $\eps$ is small enough, we have
\begin{align}
\begin{split}
&\|\Xi_{m-j_1,m-j_2-1}\|_{\Cc^{-\eps}}\bigg(\int_\M|v|^{p-2+2m}dV\bigg)^{(1-\eps)\frac{p-1+j_1+j_2}{p-2+2m}}\\
&\qquad\qquad\times\bigg(\int_\M|v|^{p-2}|\nabla v|^2dV\bigg)^{\frac\eps2+\frac\eps{2p}(p-2+2(j_1+j_2))}\\
&\le \delta_1\int_\M|v|^{p-2+2m}dV+\delta_2\int_\M|v|^{p-2}|\nabla v|^2dV+C(\delta_1,\delta_2)\|\Xi_{m-j_1,m-j_2-1}\|_{\Cc^{-\eps}}^{q_2'r_2'},
\end{split}
\label{bddY6}
\end{align}
with $q_2=\frac{p-2+2m}{(1-\eps)(p-1+j_1+j_2)}$ and $\frac1{r_2}=q_2'(\frac\eps2+\frac\eps{2p}(p-2+2(j_1+j_2)))$. At last, we also have
\begin{align}
\begin{split}
&\|\Xi_{m-j_1,m-j_2-1}\|_{\Cc^{-\eps}}\bigg(\int_\M|v|^{p-2}|\nabla v|^2dV\bigg)^{\frac{p-1+j_1+j_2}{p-2+2m}}\\
&\le \delta_2\int_\M|v|^{p-2}|\nabla v|^2dV+C(\delta_2)\|\Xi_{m-j_1,m-j_2-1}\|_{\Cc^{-\eps}}^{\frac{p-2+2m}{2m-1-j_1-j_2}}.
\end{split}
\label{bddY7}
\end{align}
Thus, by \eqref{bddY4}, \eqref{bddY5}, \eqref{bddY6} and \eqref{bddY7}, summing over $0\le j_1\le m$ and $0\le j_2\le m-1$ with $(j_1,j_2)\neq (m,m-1)$, and integrating in $t' \in [t_0,t]$, we deduce
\begin{align}
\begin{split}
&\big| \int_{t_0}^t\langle F_0(v, \Xi)(t'), |v|^{p-2}v(t') \rangle_{L^2(\M)}dt' \big|\\
& \le C_{\circ }\delta_1 \int_{t_0}^t\int_\M|v(t')|^{p-2+2m}dVdt'+C_{\circ}\delta_2\int_{t_0}^t\int_\M|v|^{p-2}|\nabla v|^2(t')dVdt'\\
&\qquad\qquad+C(\dl_1, \dl_2)\big((t-t_0)+\|\Xi\|_{\mathcal Z_T^{\eps, q}}^q\big),
\end{split}
\label{bddY7b}
\end{align}
for some constant $C_{\circ} = C_{\circ}(m)>0$ independent of $\delta_1,\delta_2$ and some large $q = q(m,p) \ge 1$.

We now consider the remaining terms on the right-hand side of \eqref{aprioriestimate}. By the Cauchy-Schwarz, H\"older and Young inequalities, we have (omitting the time dependence again)
\begin{align}
\begin{split}
&\big|\langle |v|^{p-2}vA,dv\rangle_{L^2(T^*\M)}\big|\\
&\le \|A\|_{L^\infty}\bigg(\int_\M|v|^{p-2}|\nabla v|^2dV\bigg)^\frac12\Big(\int_\M |v|^pdV\Big)^\frac12\\
&\le \delta_2\int_\M|v|^{p-2}|\nabla v|^2dV+C(\delta_2,\|A\|_{L^\infty},p,m)\Big(\int_\M |v|^{p-2+2m}dV\Big)^{\frac{p}{p-2+2m}}\\
&\le \delta_2\int_\M|v|^{p-2}|\nabla v|^2dV+\delta_1\int_\M |v|^{p-2+2m}dV+C(\delta_1,\delta_2,\|A\|_{L^\infty},p,m).
\end{split}
\label{bddY8}
\end{align}
Similarly, by integration by parts, the Leibniz rule for 1-forms, Cauchy-Schwarz, H\"older, and Young inequalities, we have
\begin{align}
\begin{split}
&\big|\langle d^*(Av,|v|^{p-2}v\rangle_{L^2(\M)}\big|\les_p \|A\|_{L^\infty}  \int_\M |v|^{p-1}|\nabla v|dV\\
&\les \|A\|_{L^\infty}\Big(\int_\M|v|^{p-2}|\nabla v|^2(t)dV\Big)^\frac12\Big(\int_\M |v|^pdV\Big)^\frac12\\
&\le \delta_2\int_\M|v|^{p-2}|\nabla v|^2dV+\delta_1\int_\M |v|^{p-2+2m}dV+C(\delta_1,\delta_2,\|A\|_{L^\infty},p,m).
\end{split}
\label{bddY9}
\end{align}

By Lemma \ref{LEM:a priori estimate}, \eqref{aprioriestimate}  together with \eqref{bddY7b}, \eqref{bddY8} and \eqref{bddY9}, we end up with
\begin{align*}
&\frac{1}{p} \Big( \big\| v(t) \big\|_{L^p}^p -  \big\| v(t_0) \big\|_{L^p} ^p \Big)  + \beta_1\int_{t_0}^t \int_\M |v(t')|^{p-2 + 2m}dVdt'+\alpha_1\int_{t_0}^t\int_\M|v(t')|^p|A|^2dVdt'\notag\\
&\qquad\qquad\qquad\qquad+ 4 \eta \alpha_1 \int_{t_0}^t \int_\M  |v|^{p -2}(t')| \nb v|^2(t')dV dt' \notag \\
&\leq  (C_{\circ}+2)\delta_1\int_{t_0}^t \int_\M|v(t')|^{p-2+2m}dVdt' +(C_{\circ}+2)\delta_2\int_{t_0}^t\int_\M|v(t')|^{p-2}|\nabla v(t')|^2dVdt'\\
&\qquad\qquad+C(\delta_1,\delta_2)\big((t-t_0)+\|\Xi\|_{\mathcal Z_T^{\eps, q}}^q\big) dt'.
\end{align*}
This proves \eqref{apriori2} after taking $\delta_1,\delta_2$ small enough such that we can absorb the first two terms of the right-hand side above into the left-hand side.
\end{proof}

We are now in position to prove Proposition~\ref{PROP:deterministic}.

\begin{proof}[Proof of Proposition \ref{PROP:deterministic}]
Fix $\al_1, \al_2, \be_1, \be_2, m, s_0, r, p$ as in the statement of Proposition \ref{PROP:deterministic} and let $\eps, q$ be given by Lemma \ref{LEM:a priori 2}. As discussed in the above, we assume without loss of generality that $\al_2 \ge 0$. Note that the condition \eqref{condition on r} is such that Proposition \ref{PROP:LWP2} and condition \eqref{cond a priori} hold; see Remark \ref{RMK:expo}.

Let $v_0 \in \Cc^{s_0}(\M)$ and $\Xi \in \mathcal Z^{\eps, q}(\R_+)$. Consider the solution $v \in C(  [0,T]; \Cc^{s_0}(\M)) \cap  C(  (0,T]; \Cc^{2\e }(\M))$, for some $T \in (0,1]$, to \eqref{model equation} with enhanced data set $(v_0, \Xi)$, provided by Proposition \ref{PROP:LWP1}. 

We now fix an arbitrary time $T_0>0$. By applying Proposition \ref{PROP:LWP2} to \eqref{model equation} with data $(v(T), \Xi(\cdot - T))$, the maximal existence time $T^* \in(T,T_0]$ of $v$ on $[0,T_0]$ satisfies the following alternative:
\begin{align*}
\T=T_0 \qquad\text{or}\qquad\sup_{0\le t<\T}\|v(t)\|_{L^p}=+\infty.
\end{align*}
Thus, if suffices to show that there exists a positive constant $B_0 = B(T_0, \|v_0\|_{\Cc^{s_0}}, \|\Xi\|_{\mathcal Z^{\eps,q_0}_{T_0}})$ such that
\begin{align}
\sup_{0\le t<\T}\|v(t)\|_{L^p}\le B_0
\label{aprioY1}
\end{align}
to conclude the proof since $T_0$ is arbitrary. We note that \eqref{aprioY1} also proves the bound \eqref{boundY}.

The estimate \eqref{aprioY1} would follow immediately from Lemmas \ref{LEM:a priori estimate} and \ref{LEM:a priori 2} if $v$ were smooth. We circumvent this technicality via an approximation argument. To this end, we introduce some notations. Let $N \in \N$ and $\Xi^N = \P_N \Xi$, where $\P_N$ is the frequency projection \eqref{PN}. Then, by Proposition \ref{PROP:LWP1} and the uniform boundedness of $\{\P_N\}_{N\in\N}$ in $L^p$ spaces, there exists $T_1 \le T$ independent of $N$ and a solution $v^N \in C([0,T_1]; \Cc^{s_0}(\M)) \cap  C((0,T_1]; \Cc^{2\e }(\M))$ to \eqref{model equation} with data $(v_0, \Xi^N)$ such that 
\begin{align}\|v^N(T_1)\|_{\Cc^{2\eps}} \les 1+ \|v_0\|_{\Cc^{s_0}}.
\label{bddpf1}
\end{align}
The bound \eqref{bddpf1} is a consequence of the regularity of the solution map \eqref{solmap1}. Here, the implicit constant is independent of $N$.

Now, by Proposition \ref{PROP:LWP2}, there exists a maximal existence time $T_N^* \in (0,T_0]$ of $v^N$ on $[0,T_0]$ such that $v^N \in C([0,T_N^*]; \Cc^{s_0}(\M)) \cap  C((T_1, T^*_N]; \Cc^{2 \eps}(\M))$ and
\begin{align*}
T^*_N=T_0 \qquad\text{or}\qquad\sup_{0\le t<T^*_N}\|v^N(t)\|_{L^p}=+\infty.
\end{align*}
Since, $\Xi^N$ is a smooth function (in space) it is easy to see that $v^N \in  C([0,T_N^*]; \Cc^{s_0}(\M)) \cap  C_t^0 C^2_x((T_1,T_N^*)\times \M)$. Furthemore, $v^N$ satisfies the integral equation
\[v^N(t) = S(t) v_0 + \int_{0}^t S(t-t') F(v^N, \Xi^N) dt'\]
for all $t \in (T_1,T_N^*)$ and where $F$ is as in \eqref{def F}. The last formula shows that $v^N \in C_t^1 C^2_x((T_1,T_N^*)\times \M)$ since $S$ is smoothing in time and $S(t-t') F(v^N, \Xi^N) \in C_{t,t'}^0 C^2_x((T_1,T_N^*)^2\times \M)$.

Hence, Lemmas \ref{LEM:a priori estimate} and \ref{LEM:a priori 2} rule out the possibility of blowup for $v^N$ on $[0,T^*_N]$ and yield
\begin{align}
\begin{split}
\|v^N(t)\|_{L^p} & \leq \| v^N(T_1)  \|_{L^p} + C\big((t-T_1)+\|\Xi^N\|_{\mathcal Z_{T_0}^{\eps, q}}^q\big) \\
& \le C_1 \|v_0\|_{\Cc^{s_0}} + C\big((t-T_1)+\|\Xi\|_{\mathcal Z_{T_0}^{\eps, q}}^q\big) =: B_0
\end{split}
\label{bddpf2}
\end{align}
for all $t \in (T_1,T_N^*)$, where we used \eqref{bddpf1} and Corollary \ref{COR : bernstein}. Here, $C$ is the constant provided by Lemma \ref{LEM:a priori 2} and $C_1 >0$ is an absolute constant which is large enough.

Thus, \eqref{aprioY1} follows from \eqref{bddpf2}, the convergence 
\[ \Xi^N \too \Xi, \quad \text{in $\mathcal Z^{\eps,q}([0,T_0])$} \]
and the regularity of the solution map \eqref{solmap2}.
\end{proof}

We conclude this section by the proof of Theorem \ref{THM : deterministic GWP}.

\begin{proof}[Proof of Theorem \ref{THM : deterministic GWP}]
The proof follows from Propositions \ref{PROP:deterministic} and \ref{PROP:construction stochastic objects} and is an immediate modification of the proof of Theorem \ref{THM : LWP theory 1}. We omit details.
\end{proof}

\end{document}